\newcommand{\argmin}[1]{\underset{#1}{\mathrm{argmin}}}
\newcommand{\minimize}[1]{\underset{#1}{\mathrm{minimize}}}
\newcommand{\lmo}{\mathrm{LMO}}
\newcommand{\diam}{\mathrm{diam}}
\newcommand{\gap}{\mathbf{gap}}
\newcommand{\mD}{\mathcal D}
\newcommand{\mE}{\mathcal E}
\newcommand{\mS}{\mathcal S}
\newcommand{\mb}{\mathbb}
\newcommand{\R}{\mathbb R}
\newcommand{\bs}{\mathbf s}
\newcommand{\bS}{\mathbf S}
\newcommand{\bZ}{\mathbf Z}
\newcommand{\bx}{\mathbf x}
\newcommand{\bd}{\mathbf d}
\newcommand{\bg}{\mathbf g}
\newcommand{\bz}{\mathbf z}
\newcommand{\bv}{\mathbf v}
\newcommand{\by}{\mathbf y}
\newcommand{\bh}{\mathbf h}
\newcommand{\FW}{\textsc{FW}}
\newcommand{\multFW}{\textsc{MultFW}}
\newcommand{\FWflow}{\textsc{FWFlow}}
\newcommand{\AvgFW}{\textsc{AvgFW}}
\newcommand{\AvgflowFW}{\textsc{AvgFWFlow}}
\newcommand{\sign}{\mathbf{sign}}
\newcommand{\conv}{\mathbf{conv}}
\newcommand{\diag}{\mathbf{diag}}
\newtheorem{theorem}{Theorem}[section]
\newtheorem{lemma}[theorem]{Lemma}
\newtheorem{proposition}[theorem]{Proposition}
\newtheorem{corollary}[theorem]{Corollary}
\theoremstyle{definition}
\newcommand{\red}[1]{{\color{red}#1}}
\begin{document}

%

%

\twocolumn[

\aistatstitle{Reducing Discretization Error in the  Frank-Wolfe Method}

\aistatsauthor{ Zhaoyue Chen \And Yifan Sun }

\aistatsaddress{ Stony Brook University \And Stony Brook University} ]

\begin{abstract}
The Frank-Wolfe algorithm is a popular method in structurally constrained machine learning applications, due to its fast per-iteration complexity. However, one major limitation of the method is a slow rate of convergence that is difficult to accelerate due to erratic, zig-zagging step directions, even asymptotically close to the solution. 
We view this as an artifact of discretization; that is to say, the Frank-Wolfe \emph{flow}, which is its trajectory at asymptotically small step sizes, does not zig-zag, and reducing discretization error will go hand-in-hand in  producing a more stabilized method, with better convergence properties.
We propose two improvements: a multistep Frank-Wolfe method that directly applies optimized higher-order discretization schemes; and an LMO-averaging scheme with reduced discretization error, and whose local convergence rate over general convex sets accelerates from a rate of $O(1/k)$ to up to $O(1/k^{3/2})$.

\end{abstract}

\section{INTRODUCTION}
The Frank Wolfe algorithm (\FW) or the conditional gradient algorithm \citep{LEVITIN19661} is a popular method in constrained convex optimization. It was first developed in \citet{frank1956algorithm} for maximizing a concave quadratic programming problem with linear inequality constraints, and later extended in  \citet{dunn1978conditional}  to minimizing more general smooth convex objective function on a bounded convex set.
More recently, \citet{jaggi2013revisiting} analyzed (\FW) over general convex and continuously differentiable objective functions with convex and compact constraint sets,  and illustrates that when a sparse structural property is desired, the per-iteration cost can be much cheaper than computing projections. 
This has spurred a renewed interest of (\FW) to broad applications in machine learning and signal processing \citep{LacosteJulien2013BlockCoordinateFO,Krishnan2015BarrierFF}, recommender systems \citep{Freund2017AnEF}, image and video co-localization \citep{Joulin2014EfficientIA}, etc. 

Specifically, (\FW) solves the constrained optimization 
\begin{equation}
\minimize{x\in \mD} \quad f(x)
\label{eq:main}
\end{equation}
via the repeated iterations
\[
    \begin{array}{rcl}
\bs_k &=& \argmin{\bs\in \mD} \; \nabla f(\bx_k)^T\bs, \\
\bx_{k+1} &=& \bx_k + \gamma_k (\bs_k - \bx_k).
    \end{array}
    \tag{\FW}
\]
The first operation is often referred to as the \emph{linear minimization oracle (LMO)}, and is the support function of $\mD$ at $-\nabla f(\bx)$:
\[
\lmo_\mD(\bx) := \argmin{\bs\in \mD}\;\nabla f(\bx)^T\bs. 
\]
The iterates $\bs_k$ are usually the vertices of $\mD$ and are often called \emph{atoms}, as their convex combinations build $\bx_k\in \mD$.

In particular, computing the LMO over $\mD$ is often computationally cheap compared to the corresponding projection, especially when $\mD$ is the level set of a sparsifying norm, e.g. the 1-norm or the nuclear norm. 
However, the tradeoff of the cheap per-iteration rate is that the overall convergence rate, in terms of number of iterations $k$, is often much slower than that of projected gradient descent \citep{lacoste2015global,freund2016new}, and in particular shows no signs of acceleration even when $f$ is $\mu$-strongly convex or when momentum-based acceleration techniques are employed. While various acceleration schemes \citep{lacoste2015global} have been proposed and several improved rates given under specific problem geometry \citep{garber2015faster}, by and large the ``vanilla" Frank-Wolfe method, using the ``well-studied step size" $\gamma_{k} = O(1/k)$, can only be shown to reach $O(1/k)$ convergence rate in terms of objective value decrease \citep{Canon1968ATU,jaggi2013revisiting,freund2016new}


 \paragraph{The Zig-Zagging phenomenon.}
The slowness of the Frank-Wolfe method is often explained as a consequence of a potential  ``zig-zagging" phenomenon.
In particular, when the true solution lies on a low dimensional facet and the incoming iterate is angled in a particular way, the method will alternate picking up vertices of this facet, causing a ``zig-zagging" pattern; e.g. $\bx_{k+1}-\bx_{k}$ does not in general point toward $\bx^*$. For this reason, techniques like line search and momentum-based acceleration have little effect.
In fact, methods like the Away-Step Frank Wolfe \citep{lacoste2015global} are designed to counter exactly this, by forcing the iterate to change its angle and approach more directly.

\paragraph{Frank Wolfe flow.} In this paper, we investigate the \emph{dynamical systems interpretation} of the Frank-Wolve method. In particular, we study (\FWflow), whose Euler discretization is (\FW):

\[
\begin{array}{rcl}
s(t)& =& \lmo_\mD(x(t)) \\[1ex]
\dot x(t) &=& \gamma(t) (s(t)-x(t))
    \end{array}
    \;\;(\FWflow)
\]

In particular, (\FWflow)~is an example of Krasovskii regularization \citep{sanfelice2008generalized,krasovskii1968regularization} and thus existance of solutions is ensured.
This dynamical system first studied in \citet{jacimovic1999continuous}, and is a part of the construct presented in \citet{diakonikolas2019approximate}. 
However, neither paper considered the effect of using advanced discretization schemes to better imitate the flow, as a way of improving the method. 
 From an optimization point of view, (\FWflow) is important because, under the family of decay sequences $\gamma(t) = O(1/t)$, its convergence rate is \emph{arbitrarily close to linear}, under the right parameter choices; in contrast, (\FW) is upper bounded by a sublinear $O(1/k)$ convergence rate. 

Viewing the excess error in (\FW) compared to (\FWflow) as discretization error, this work looks at characterizing and attacking this source of error, in efforts of proposing an improved Frank-Wolfe method.  In particular, we argue that this discretization error is particularly detrimental in almost all useful cases of \FW~applied to \emph{sparse optimization applications}, e.g. when $\mD$ is the level-set of the one-norm  or another sparsifying penalty. In this case, improvements found in related works may not apply. 
\begin{itemize}[leftmargin=*]
    \item When the solution $\bx^*$ to \eqref{eq:main} is in the interior of $\mD$, then using \emph{line search}, (\FW) converges with a linear rate if $f$ is strongly-convex \citep{guelat1986some}.   However, this corresponds to a fully dense $\bx^*$, which is not desired in sparse optimization.
    
    \item When the set $\mD$ is strongly convex and $\bx^*$ is on the boundary of $\mD$, or when $\bx^*$ is on a minimal facet of $\mD$, then (\FW) converges with an $O(1/k^2)$ rate \citep{garber2015faster}. However, the 1-norm ball (and other popular choices of $\mD$) are not strongly convex, and the only time when this scenario will apply is if $\bx^*$ has exactly one nonzero.
\end{itemize} 
For these reasons, this paper investigates the more general case where $\bx^*$ is on a \emph{non-minimal facet or interior} of a \emph{non-strongly convex (but convex)} set $\mD$. 

\paragraph{Contributions.} In this regime, we offer three main theoretical contributions.
\begin{itemize}[leftmargin=*]
    \item The \emph{continuous-time Frank-Wolfe method}, e.g. the dynamical system of whose explicit Euler discretization gives (\FW), has a fundamentally faster convergence rate, which is arbitrarily close to linear convergence rate. This suggests that the fundamental bottleneck in the convergence speed is \emph{slowly decaying discretization error}.
    \item While multistep methods reduce discretization error and improve the quality of step directions, overall no finite-window averaging method can fundamentally improve the $O(1/k)$ convergence rate.
    \item Finally, an infinite-window averaging method is introduced, which aggressively attacks the discretization error, and improves the local convergence rate of (\FW) to up to $O(1/k^{3/2})$.
\end{itemize}
The differentiation between local and global convergence  is characterized by the identification of the sparse manifold ($\bar k$, where the LMOs of $\bx_k$ will always be contained in the potential LMOs of $\bx^*$ for all $k \geq \bar k$); this follows the local convergence analysis of \citet{liang2014local, poon2018local, liang2017local,nutini2022let,nutini2019active,hare2004identifying,sun2019we} over general optimization methods.
Overall, these results suggest improved behavior for sparse optimization applications, which is the primary beneficiary of (\FW).

\subsection{Related works}

\paragraph{Continuous-time optimization}
Recent years have witnessed a surge of research papers connecting  dynamical systems with optimization algorithms, generating more intuitive analyses and proposing accelerations. For example, in \citet{su2016differential}, the Nesterov accelerated gradient descent and Polyak Heavy Ball schemes are shown to be discretizations of a certain second-order ordinary differential equation (ODE), whose tunable vanishing friction pertains to specific parameter choices in the methods.

\paragraph{Multistep discretization methods}
Inspired by this analysis, several papers \citep{jingzhao2018direct,shi2019acceleration} have proposed improvements using advanced discretization schemes; \cite{jingzhao2018direct} uses Runge-Kutta integration methods to improve  accelerated gradient methods, and \cite{shi2019acceleration} shows a generalized Leapfrog acceleration scheme which uses a semi-implicit scheme to achieve a very high resolution approximation of the ODE.

Note that for gradient flow, which has the same order of convergence rates as the gradient method, however, it is not likely that multistep methods can offer rate \emph{improvements.} Thus, a unique angle of this work is that in the case of Frank-Wolfe, we will show that indeed, (\FWflow) has a fundamentally faster convergence rate than (\FW). However, with regards to higher order discretization approaches, this work offers a \emph{negative result}, in that no simple explicit higher order discretization scheme fundamentally improves the method's overall convergence rate. In a way, this is a cautionary tale, suggesting that even when discretization error accounts for fundamentally slowness, simple discretization improvements are usually not sufficient.


\paragraph{Accelerated FW.}
A notable work that highlights the notorious zig-zagging phenomenon is
\citet{lacoste2015global}, where an Away-FW method is proposed that cleverly removes offending atoms and improves the search direction. Using this technique, the method is shown to achieve linear convergence under strong convexity of the objective. The tradeoff, however, is that this method requires keeping  past atoms, which may incur an undesired memory cost.
A work that is particularly complementary to ours is \citet{garber2015faster}, which show an improved $O(1/k^2)$ rate when the \emph{constraint set} is strongly convex--this reduces zigzagging since solutions cannot lie in low-dimensional ``flat facets''.
Our work addresses the exact opposite regime, where we take advantage of ``flat facets'' in sparsifying sets (1-norm ball, simplex, etc). This allows the notion of \emph{manifold identification} as determining when suddenly the method behavior improves.

\paragraph{Averaged FW.} Several previous works have investigated \emph{gradient averaging} \citep{acceleratingFW,abernethy2017frank}. While performance seems promising, the rate was not improved past $O(\frac{1}{k})$. 
\citet{ding2020k} investigates \emph{oracle averaging} by solving small subproblems at each iteration to achieve optimal weights.

 
\paragraph{Sparse optimization.}
Other works that investigate \emph{local convergence behavior} include  \citet{liang2014local,liang2017local,poon2018local,sun2019we,iutzeler2020nonsmoothness,nutini2019active, hare2004identifying}.
Here, problems which have these two-stage regimes are described as having \emph{partial smoothness}, which allows for the low-dimensional solution manifold to have significance. 
In our work, 
we differentiate a local convergence regime of when this manifold is ``identified", e.g. all future LMOs are drawn from vertices of this specific manifold. After this point, we show that convergence of both the proposed flow and method can be improved with a faster decaying discretization error, which in practice may be fast. 

\section{THE PROBLEM WITH FW}
\subsection{Small examples}
There are several different explanations as to why the convergence rate of (\FW) appears fundamentally slow, with the predominant theories focused on the bad (zig-zagging) step directions.
Therefore we begin by evaluating the vanilla (\FW) with simple acceleration schemes: line search (LS), Nesterov's extrapolation applied to $\bx_{k}$ direction (Nest. Acc.), and Nesterov's 3-point acceleration as described in \cite{li2020does} (Li Acc). We do this over 5 cases of $\mD$ and $\bx^*$, where $f$ is a strongly convex quadratic.
\begin{itemize}[leftmargin=*]
    \item \textbf{Case 1:} $\mD$ is a convex polytope, and $\bx^*$ is on a minimal facet of $\mD$; in other words, $\bx^*$ is 1-sparse and contains a mixture of exactly 1 atom. In this surprising case, vanilla methods are at least $O(1/k^2)$, Nesterov's acceleration seems to achieve additional acceleration, and line search works instantaneously. We emphasize that this is a deliberately trivial case.
    
    \item \textbf{Case 2:} $\mD$ is a convex polytope, and $\bx^*$ is on a non-minimal facet of $\mD$; here, $\bx^*$  contains a mixture of atoms, more than 1 but less than $n$. This is the typical ``hard" case that we wish to investigate, where no simple acceleration techniques seem to offer any improvement.
    
    \item \textbf{Case 3:} $\mD$ is a convex polytope, and $\bx^*$ is in the relative interior of  $\mD$;  $\bx^*$  contains a full nontrivial mixture of \emph{all} atoms in $\mD$. This case is covered by \cite{guelat1986some}, which suggest linear convergence is possible, \emph{but only when line search is used}. In the absence of line search, all methods \emph{still} do poorly.
    
    \item \textbf{Case 4:}  $\bx^*$ is on the boundary of $\mD$, which is a \emph{strongly convex set.} That is, there exists $\epsilon > 0$ where, for all $x$ and $y$ in $\mathcal D$, any $0\leq\theta \leq 1$, any $z$ such that $\|z\|_2=1$, 
\[
\theta x + (1-\theta) y + \theta(1-\theta) \frac{\epsilon}{2}\|x-y\|^2 z \in \mathcal D.
\]
\citep{vial1982strong}. 
Here, as \cite{garber2015faster} has shown, the presence of a strongly convex $\mD$ allows the iterates to avoid zig-zagging, and in fact even the poorest methods achive $O(1/k^2)$ without any special tricks, with further acceleration possible via Nesterov's extrapolation or line search.
    
    \item \textbf{Case 5:} $\mD$ is a strongly convex set, and  $\bx^*$ is in the relative interior of $\mD$. In fact, this is exactly the same as Case 3: note that the acceleration due to strongly convex $\mD$ is also lost in this case, as the atoms acquired during the local convergence phase do not converge to a small neighborhood. 
\end{itemize}
Figure \ref{fig:casestudy_vanilla} summarizes these observations visually; from the performance plots, it is clear that the open areas (Case 2, and  Cases 3 and 5 without line search) indeed has limited slow convergence.

\onecolumn
\begin{figure}[p]
    \centering
    \includegraphics[width=\linewidth,trim={1cm 1cm 1cm 1cm},clip]{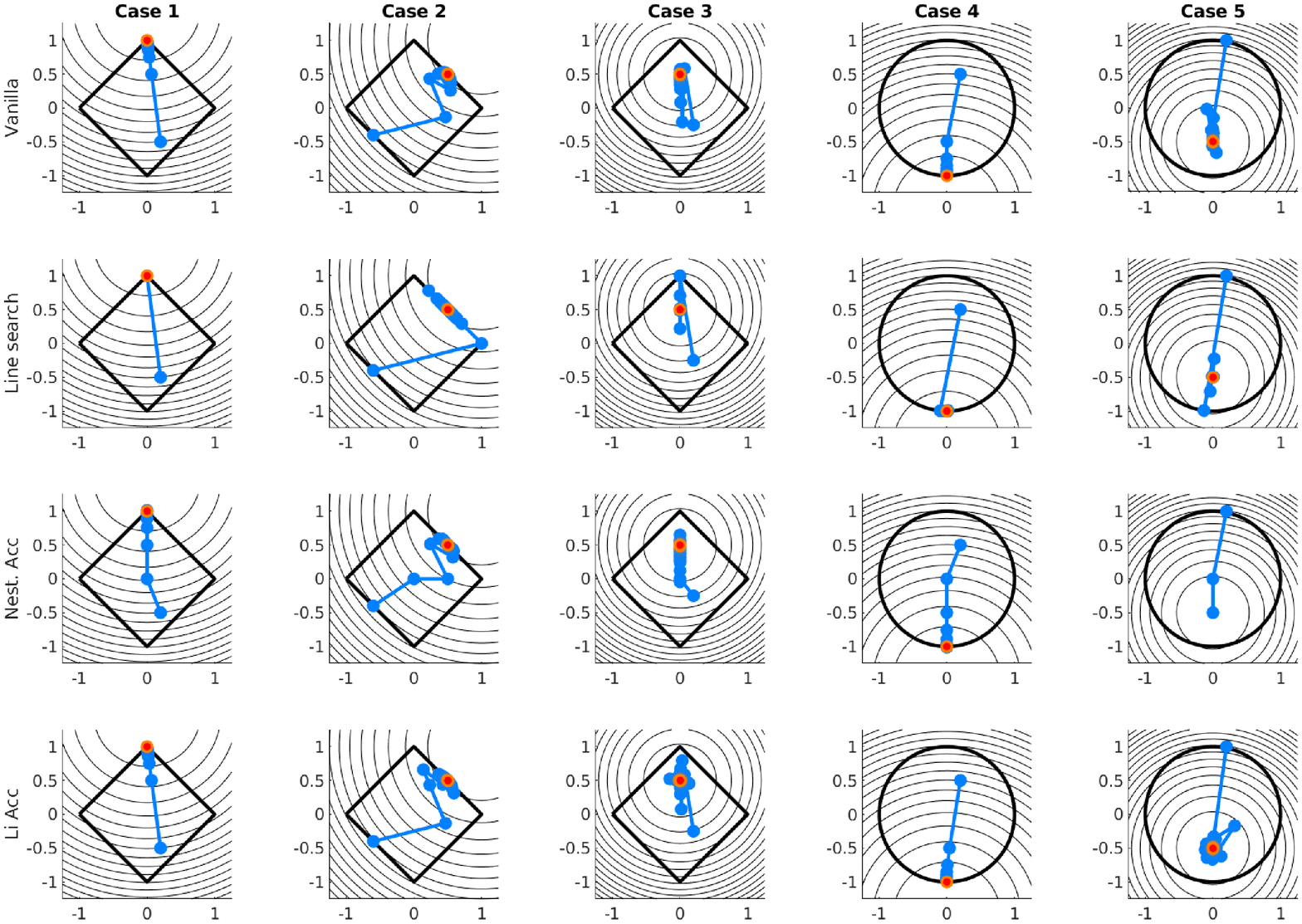}\\
    \includegraphics[width=\linewidth]{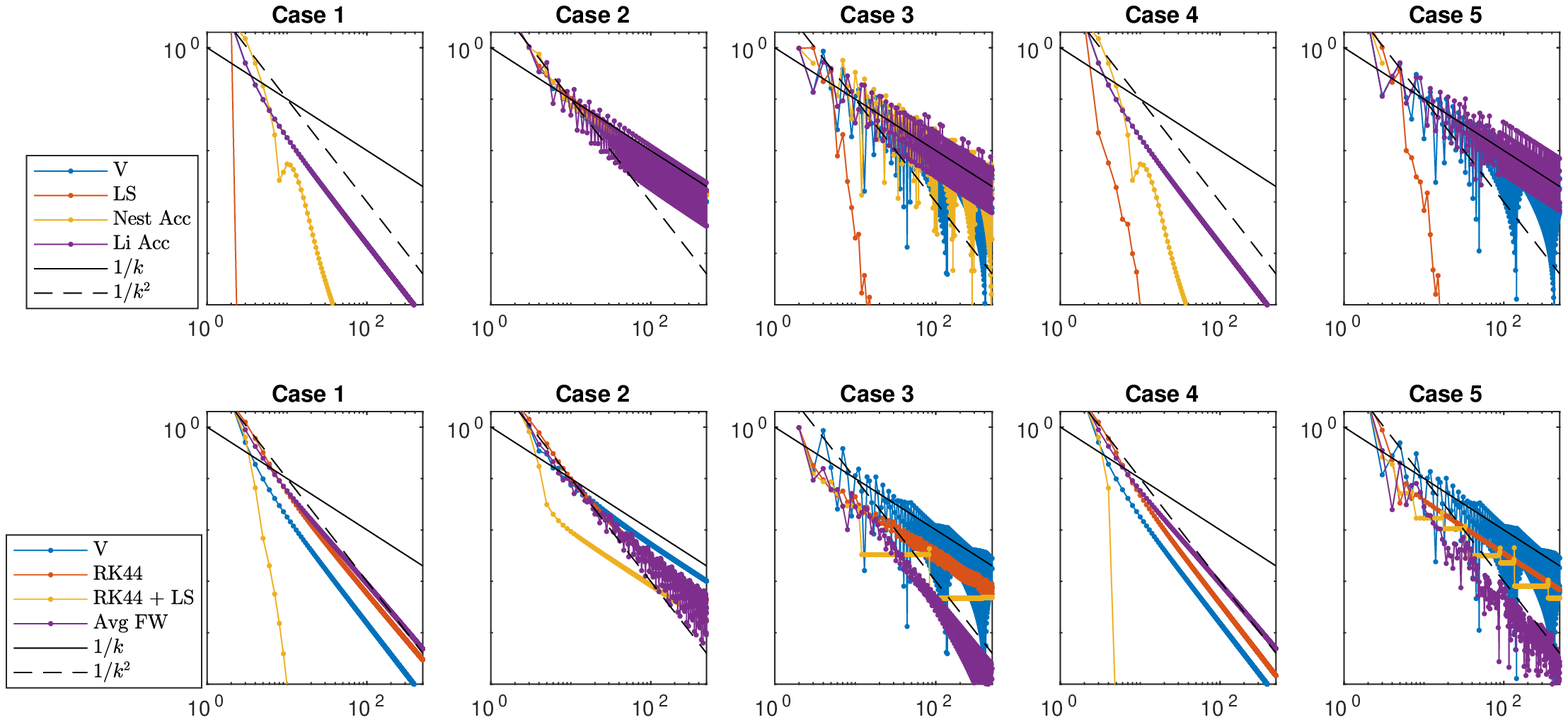} \caption{Small case study of 2-D quadratic problem, under different choices of $\mD$. The last 2 rows gives summarizing gap convergence rates (as a function of iterations $k$) over usual acceleration approaches (upper) and our proposed methods (lower). }
    \label{fig:casestudy_vanilla}
\end{figure}
\twocolumn

\subsection{Two different convergence rates}
We now investigate these problematic behaviors in terms of the (\FWflow) convergence behavior, as compared to that of (\FW). 
First, we derive the convergence rate of (\FWflow). We use the objective value suboptimality as the Lyapunov function:
\[
\mE(t) = f(x(t))-f^*, \quad \mE_k = f(\bx_k)-f^*
\]
where $f^*=f(x^*)$ and $x^*$ minimizes \eqref{eq:main}. Then, using the properties of the LMO and convexity of $f$, 
\begin{eqnarray}
\dot \mE(t) &=& \nabla f(x)^T\dot x(t) \nonumber\\
&=& \gamma(t)\underbrace{\nabla f(x)^T(s-x)}_{\text{negative duality gap}}\nonumber\\
&\leq&-\gamma(t) \mE(t)\label{eq:diffeq-vanilla}
\end{eqnarray}
 and taking $\gamma(t) = \tfrac{c}{c+t}$, we have the flow convergence rate
\[
\frac{\mE(t)}{\mE(0)} \leq \frac{ c^c }{(c+t)^c} \overset{c\to +\infty}{=} \exp(-t).
\]
Note that this rate is \emph{arbitrarily close to a linear rate}.
In contrast, using $L$-smoothness, the (\FW)~method 
satisfies the recursion
\begin{multline*}
f(\bx_{k+1})-f(\bx_k) \leq \\ \gamma_k\nabla f(\bx_k)^T(\bx_{k}-\bs_k) + \underbrace{\frac{L\gamma_k^2}{2}\overbrace{\|\bx_{k}-\bs_k\|_2^2}^{= (2D)^2}}_{\text{discretization term}}\\
\end{multline*}
where $D = \max_{x\in \mD}\|x\|_2$. Note that following this line of reasoning, we can at best bound the difference in $\mE_k$ as
\begin{equation}
\mE_{k+1}-\mE_k \leq -\gamma_k\mE_k + 2LD\gamma_k^2\label{eq:diffeq-vanilla-flow}
\end{equation}
which recursively gives a bound of $\mE_k = O(\tfrac{c}{c+k}) = O(1/k)$. Importantly, this analysis is \emph{tight in general.} 

Note the key difference in \eqref{eq:diffeq-vanilla} and its analogous terms in \eqref{eq:diffeq-vanilla-flow} is the extra $2LD^2\gamma_k^2 = O(1/k^2)$ term, which  throttles the recursion from doing better than $\mE_k = O(1/k)$.
One may ask if it is possible to bypass this problem by simply picking $\gamma_k$ decaying more aggressively; however, then such a sequence becomes summable, and  then  convergence of $\bx_k\to \bx^*$ will not be assured. Therefore, we must have a sequence $\gamma_k$ converging \emph{at least} as slowly as $O(1/k)$.
Thus, the primary culprit in this convergence rate tragedy is the bound $\|\bs_k-\bx_k\|_2=O(D)$ (nondecaying), which forces the discretization term to decay no faster than $O(1/k)$. As shown in our case studies, this is not a loose bound in general. 

In this paper, we investigate methods that push the (\FW) method more toward its (\FWflow) trajectory, using some form of averaging. First, we use the specific weight updates offered by Runge-Kutta multistep schemes, which reduce discretization errors by constant factors, and additionally seem to  improve step direction quality. Second, we propose an averaged LMO method, which gives an overall improved local convergence rate of up to $O(1/k^{3/2})$.

\section{RUNGE-KUTTA MULTISTEP METHOD}
\label{sec:rkmethod}
\subsection{The generalized Runge-Kutta family}
We now look into multistep methods that better imitate the continuous flow by reducing discretization error. 
Observe that the standard (\FW) algorithm is equivalent to the discretization of (\FWflow)~by the explicit Euler's method with step size $\Delta = 1$. It is well known that the discretization error associated with this scheme is $O(\Delta^q)$ with $q = 1$, e.g. it is a method of order 1.

We now consider Runge-Kutta (RK) methods, a generalized class of higher order methods ($q \geq 1$).
Many commonly-used discretization schemes are of the Runge-Kutta family; for example, Dormand–Prince (RK45) is the default ode solver used by MATLAB. Other examples of RK-methods include the    Fehlberg (RK23) and Cash–Karp methods.
These methods are fully parametrized by some choice of $A\in \R^{q\times q}$, $\beta\in \R^q$, and $\omega\in \R^q$ and at step $k$ can be expressed as (for $i = 1,...,q$)
\begin{equation}
\begin{array}{lcl}
\xi_i &=& \displaystyle\dot x\big((k+\omega_i)\Delta ,\;  \bx_{k}+  \sum_{j=1}^q A_{ij} \xi_j\big),\\
\bx_{k+1} &=& \bx_{k}+ \sum_{i=1}^q \beta_i \xi_i.
\end{array}
\label{eq:general_discrete}
  \end{equation}
  For consistency,  $\sum_i \beta_i = 1$, and to maintain explicit implementations,  $A$ is always strictly lower triangular. As a starting point, $\omega_1 = 0$.
  Referring to the description of $\dot x(t)$ as given in (\FWflow), then given a matrix $A\in \R^{q\times q}$ and vectors $\beta,\omega\in \R^q$, the sequence described in \eqref{eq:general_discrete} describes the \emph{q-stage multistep Frank-Wolfe (\multFW) method}. 
  (See also Appendix \ref{app:sec:experiments}.)
  Figure \ref{fig:trajectories} compares pictorially the trajectory of the vanilla (\FW) method with (\multFW), which after averaging has a far more controlled and less erratic trajectory. We hope to leverage this into better convergence behavior.

   
  \begin{proposition}[Feasibility]
  \label{prop:RK-feas}
For a given $q$-stage multistep method defined by $A$, $\beta$, and $\omega$, for each given $k\geq 1$, define
\[
 \bar \gamma_i^{(k)} = \frac{c}{c+k+\omega_i}, \qquad 
 \Gamma^{(k)} = \diag({\bar \gamma^{(k)}}_i),
 \]
 \[
 \mathbf P^{(k)} = \Gamma^{(k)} (I+A^T\Gamma^{(k)})^{-1}, \qquad \mathbf z^{(k)} = q \mathbf P^{(k)}\beta.
\]
Then if $0\leq \mathbf z^{(k)}\leq 1$ for all $k\geq 1$, then 
\[
\bx_{0}\in \mD\Rightarrow \bx_{k}\in \mD, \quad \forall k\geq 1.
\]
  \end{proposition}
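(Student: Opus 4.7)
The plan is induction on $k$: given $\bx_k \in \mD$, I will exhibit $\bx_{k+1}$ as a convex combination of $\bx_k$ together with the $q$ LMO vertices $\bs_i$ (each of which lies in $\mD$ automatically, as the LMO returns a vertex of $\mD$), so that convexity of $\mD$ yields $\bx_{k+1}\in \mD$.

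First, substituting the (\FWflow) definition $\dot x(t,x) = \gamma(t)(\lmo_\mD(x)-x)$ into the RK stages rewrites each $\xi_i$ as
\[
\xi_i \;=\; \bar\gamma_i^{(k)}\Bigl(\bs_i - \bx_k - \sum_{j=1}^q A_{ij}\xi_j\Bigr),\qquad \bs_i := \lmo_\mD\Bigl(\bx_k + \sum_{j=1}^q A_{ij}\xi_j\Bigr).
\]
Rearranging and stacking the $q$ stages coordinate-wise in $\R^n$ produces the linear system $(I+\Gamma^{(k)}A)\xi = \Gamma^{(k)}(\mathbf S - \bx_k\mathbf 1)$, where $\mathbf S$ collects the $\bs_i$. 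Strict lower-triangularity of $A$ makes $I+\Gamma^{(k)}A$ unit lower triangular, hence invertible, so $\xi$ has a closed form in terms of $\bs_i - \bx_k$.

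Next, I would compute
\[
\bx_{k+1}-\bx_k \;=\; \sum_{i=1}^q \beta_i\,\xi_i \;=\; \beta^T(I+\Gamma^{(k)}A)^{-1}\Gamma^{(k)}(\mathbf S - \bx_k\mathbf 1).
\]
Taking a transpose and using $\Gamma^{(k)T}=\Gamma^{(k)}$ gives $\bigl((I+\Gamma^{(k)}A)^{-1}\Gamma^{(k)}\bigr)^T = \Gamma^{(k)}(I+A^T\Gamma^{(k)})^{-1} = \mathbf P^{(k)}$, so the coefficient vector applied to $(\bs_i - \bx_k)$ is exactly $\lambda := \mathbf P^{(k)}\beta = \tfrac{1}{q}\mathbf z^{(k)}$. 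Therefore
\[
\bx_{k+1} \;=\; (1-\mathbf 1^T\lambda)\,\bx_k + \sum_{i=1}^q \lambda_i\,\bs_i.
\]

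Finally, the hypothesis $0\leq \mathbf z^{(k)}\leq 1$ forces $\lambda_i = \mathbf z_i^{(k)}/q \in [0,1/q]$, so each $\lambda_i \geq 0$ and $\mathbf 1^T\lambda \leq q\cdot (1/q) = 1$; all $q+1$ coefficients in the display above are therefore nonnegative and sum to $1$. Hence $\bx_{k+1}$ is a genuine convex combination of $\bx_k,\bs_1,\dots,\bs_q$, all of which lie in $\mD$, closing the induction. The main technical step is the transpose manipulation matching the closed-form coefficients to the paper's formula for $\mathbf P^{(k)}$ (which uses $A^T$ rather than $A$); invertibility of $I+\Gamma^{(k)}A$ is immediate from explicitness of the scheme and requires no assumption on $c$ or $k$.
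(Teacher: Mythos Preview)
Your proposal is correct and follows essentially the same approach as the paper: solve the stacked linear system for the stages to obtain $\bx_{k+1} = (1-\mathbf 1^T\mathbf P^{(k)}\beta)\,\bx_k + \sum_i (\mathbf P^{(k)}\beta)_i\,\bs_i$, then use $0\le \mathbf z^{(k)}\le 1$ to conclude this is a convex combination of points in $\mD$. The only cosmetic difference is that the paper groups the sum as $\tfrac{1}{q}\sum_i\bigl((1-\mathbf z_i^{(k)})\bx_k + \mathbf z_i^{(k)}\bs_i\bigr)$ (an average of $q$ pairwise convex combinations), whereas you argue directly that the $q+1$ weights are nonnegative and sum to one; these are equivalent.
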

 The proof is in  Appendix \ref{app:sec:multistepfw}.
This condition  can be checked explicitly and is true of almost all RK methods with a notable exception of the midpoint method, where $\bz^{(k)}_i<0$ is possible. 
A full table of weights for various RK methods is in Appendix \ref{app:sec:tables}.

\subsection{Convergence of (\multFW)}
We first establish that using a generalized Runge-Kutta method cannot \emph{hurt} convergence, as compared to the usual Frank-Wolfe method.
\begin{proposition} 
All Runge-Kutta methods converge at worst with rate  $f(\bx_{k})-f(\bx^*)\leq O(1/k)$.
\label{prop:main-rungekutta-positive}
\end{proposition}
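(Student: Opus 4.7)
The plan is to reduce the $q$-stage update to a standard (\FW)-like recursion on $\mE_k = f(\bx_k)-f^*$ by rewriting it as a single convex combination plus a controlled perturbation. Following the same linear algebra that underpins Proposition~\ref{prop:RK-feas}, I would solve the implicit system $\xi_i = \bar\gamma_i^{(k)}(\bs_i - \bx_k - \sum_j A_{ij}\xi_j)$ coordinatewise to obtain $\xi = (I+\Gamma^{(k)}A)^{-1}\Gamma^{(k)}(S - \bx_k\mathbf{1}^T)$, and then aggregate with weights $\beta$. Recognizing the transpose of $(I+\Gamma^{(k)}A)^{-1}\Gamma^{(k)}$ as $\mathbf P^{(k)}$, the step collapses to
\begin{equation*}
\bx_{k+1} = \bx_k + \sum_{i=1}^q \frac{z_i^{(k)}}{q}(\bs_i - \bx_k) = (1-\alpha_k)\bx_k + \alpha_k\bar\bs_k,
\end{equation*}
with $\alpha_k := \tfrac{1}{q}\sum_i z_i^{(k)}$ and $\bar\bs_k := \alpha_k^{-1}\sum_i(z_i^{(k)}/q)\bs_i$. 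Proposition~\ref{prop:RK-feas} guarantees $\mathbf z^{(k)}\in[0,1]^q$, so $\alpha_k\in[0,1]$ and $\bar\bs_k\in\mD$. A short asymptotic expansion using $\Gamma^{(k)}=\Theta(1/k)\cdot I$ gives $\mathbf P^{(k)} = \Gamma^{(k)} + O(1/k^2)$, hence $\alpha_k = \sum_i \beta_i \bar\gamma_i^{(k)} + O(1/k^2) = \Theta(c/k)$.

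Next I would apply $L$-smoothness to get
\begin{equation*}
\mE_{k+1}-\mE_k \leq \alpha_k \nabla f(\bx_k)^T(\bar\bs_k - \bx_k) + \tfrac{L\alpha_k^2}{2}\|\bar\bs_k - \bx_k\|^2,
\end{equation*}
bound the quadratic piece by $2LD^2\alpha_k^2$, and then handle the linear piece by perturbing each intermediate LMO back to $\bx_k$. The nontrivial subtlety is that $\bar\bs_k$ is a mixture of $\bs_i = \lmo_\mD(\by_i)$ evaluated at $\by_i = \bx_k+\sum_j A_{ij}\xi_j$, not at $\bx_k$. Since $\|\xi_j\|=O(\bar\gamma_j^{(k)} D)=O(1/k)$, one gets $\|\by_i-\bx_k\|=O(1/k)$; combining the LMO inequality $\nabla f(\by_i)^T\bs_i\leq \nabla f(\by_i)^T\bx^*$, convexity of $f$ at $\by_i$, and $L$-Lipschitzness of $\nabla f$ yields
\begin{equation*}
\nabla f(\bx_k)^T(\bs_i-\bx_k) \leq -\mE_k + O(1/k),
\end{equation*}
which averages over $i$ with weights $z_i^{(k)}/(q\alpha_k)$ to give the same bound for $\bar\bs_k$.

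Assembling these pieces produces the recursion $\mE_{k+1} \leq (1-\alpha_k)\mE_k + C/k^2$, which the same induction used for vanilla (\FW) converts into $\mE_k = O(1/k)$. I expect the main obstacle to be handling the transient iterations where $\alpha_k$ is not yet small: the asymptotic estimate $\alpha_k=\Theta(c/k)$ must be upgraded to a uniform bound $\alpha_k\leq 1$ on a finite burn-in (which follows from $\mathbf z^{(k)}\in[0,1]^q$), and the constant $c$ must be taken large enough that the $(1-\alpha_k)$ contraction dominates the $O(1/k^2)$ remainder, exactly as in the classical analysis of $\gamma_k=c/(c+k)$ in (\FW). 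A secondary nuisance is making the algebra of $(I+\Gamma^{(k)}A)^{-1}$ effective for general lower-triangular $A$, but strict lower triangularity of $A$ makes $\Gamma^{(k)}A$ nilpotent of order $\leq q$, so the inverse is an explicit finite Neumann series with constants depending only on the RK tableau.
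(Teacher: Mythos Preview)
Your proposal is correct and follows essentially the same route as the paper: apply $L$-smoothness to the aggregated step, use the $O(\gamma_k)$ proximity of the intermediate points $\by_i$ (the paper's $\bar\bx_i$) to $\bx_k$ together with Lipschitz $\nabla f$ and the LMO/gap inequality to reduce the linear term to $-\mE_k+O(1/k)$, and close with the standard Jaggi-style induction on the recursion $\mE_{k+1}\leq(1-\Theta(1/k))\mE_k+O(1/k^2)$. Your repackaging of the update as a single convex combination via the $\mathbf P^{(k)}$ machinery of Proposition~\ref{prop:RK-feas} is a clean but cosmetic variant of the paper's direct use of $\sum_i\beta_i\tilde\gamma_i(\bs_i-\bar\bx_i)$; the technical content is the same.
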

The proof is in Appendix \ref{app:sec:rkpositiveresults}. 
That is to say, (\multFW) cannot be an order \emph{slower} than vanilla (\FW).
\footnote{It should be noted, however, that the convergence rate in terms of $k$ does not account for the extra factor of $q$ gradient calls needed for a $q$-stage method. While this may be burdensome, it does not increase the \emph{order} of convergence rate.}

\begin{figure*}
    \centering
    \includegraphics[width=5.5in,trim={2.5cm 8cm 2.5cm 0},clip]{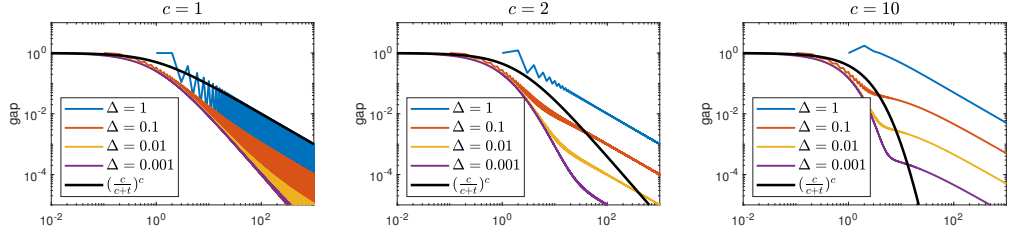}
    \caption{\textbf{Continuous vs discrete.} A comparison of the numerical error vs 
    compared with derived rate. The black curve shows the upper bound on the flow rate, compared against simulated method rates for smaller discretization units. The two-stage behavior of the curves is intriguing, as it seems there is a fundamental point where discretization error takes over, and forces the $O(1/k)$ rate to manifest.}
    \label{fig:limit_continuous}
\end{figure*}
It is left to ask if (\multFW) can \emph{improve} upon (\FW).
Recall that (\FWflow) achieves a rate of $O(1/t^c)$ rate, and taking $c\to +\infty$ achieves a linear rate.
This is also verified numerically in Figure \ref{fig:limit_continuous}; larger $c$ provides a sharper  convergence rate. 
It is hence tempting to think that increasing $c$ can help (\FW) methods in general, and in particular by adapting a higher order multistep method, we can overcome the problems caused by discretization errors. 


\paragraph{Lower bound derivation: a toy problem.}
Let us now consider a simple bounded optimization problem over scalar variables $\bx \in \mathbb R$:
\begin{equation}
        \min_{\bx} \; f(\bx)\quad \mathrm{s.~t.} \; -1\leq \bx \leq 1
        \label{eq:toyproblem}
\end{equation}
where 
\[
f(\bx) = 
\begin{cases}
\bx^2/2 & \text{if } |\bx| < \varepsilon\\
\varepsilon \bx-\varepsilon^2/2 & \text{if } \bx \geq \varepsilon\\
-\varepsilon \bx - \varepsilon^2/2 & \text{if } \bx \leq -\varepsilon\\
\end{cases}
\]
which is a scaled version of the Huber norm applied to scalars. By design, no matter how small $\varepsilon$ is, $f$ is $L$-smooth and 1-Lipschitz. Additionally,  $\lmo_{[-1,1]}(\bx) = -\sign(\bx)$. 

\begin{proposition} 
Assuming that $0 <q\mb P^{(k)} \beta < 1$ for all $k$.
Start with $\bx_{0} = 1$.
Suppose 
the choice of $\beta\in \R^p$ is not ``cancellable"; that is, there exist no partition $S_1\cup S_2 =\{1,...,p\}$ where 
\[
\sum_{i\in S_1}\beta_i - \sum_{j\in S_2} \beta_j = 0.
\]
Then regardless of the order $p$ and choice of $A$, $\beta$, and $\omega$, as long as $\sum_i \beta_i = 1$, then 
\[
\sup_{k'>k} |\bx_{k}| =\Omega(1/k).
\]
That is, the tightest upper bound is $O(1/k)$.
\label{prop:main-rungekutta-negative}
\end{proposition}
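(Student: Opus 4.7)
The plan is to reduce the \multFW{} recursion on the scalar Huber problem \eqref{eq:toyproblem} to a single affine update and then extract a lower bound from the non-cancellability of $\beta$. First, each intermediate \lmo{} output lies in $\{\pm 1\}$, so I collect the stage signs into $s^{(k)} \in \{\pm 1\}^q$. Substituting $\xi_i = \bar\gamma_i^{(k)}(s_i^{(k)} - \hat x_i)$ into $\hat x_i = \bx_k + (A\xi)_i$ and solving the (lower-triangular) linear system yields $\xi = (I + \Gamma^{(k)} A)^{-1}\Gamma^{(k)}(s^{(k)} - \bx_k\mathbf 1)$. Taking the inner product with $\beta$ and using $\sum_i \beta_i = 1$ produces the scalar recursion
\[
\bx_{k+1} \;=\; \bigl(1 - \mathbf 1^T\mathbf P^{(k)}\beta\bigr)\,\bx_k \;+\; \bigl(\mathbf P^{(k)}\beta\bigr)^T s^{(k)},
\]
phrased exactly in terms of the matrix $\mathbf P^{(k)}$ from Proposition~\ref{prop:RK-feas}.

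Next I expand $\mathbf P^{(k)}\beta$ asymptotically. Since $A$ is strictly lower triangular, $A^T\Gamma^{(k)}$ is nilpotent of index at most $q$, so the Neumann series for $(I+A^T\Gamma^{(k)})^{-1}$ is a \emph{finite} sum. Using $\bar\gamma_i^{(k)} = c/k + O(1/k^2)$, I obtain $\mathbf P^{(k)}\beta = (c/k)\beta + O(1/k^2)$ and, using $\sum_i \beta_i = 1$, $\mathbf 1^T \mathbf P^{(k)}\beta = c/k + O(1/k^2)$. The non-cancellability hypothesis is precisely the statement that $m_\beta := \min_{s\in\{\pm 1\}^q} |\beta^T s| > 0$. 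Combining this with the expansion, for every sign pattern $s^{(k)} \in \{\pm 1\}^q$ that could possibly arise from the \lmo{} cascade,
\[
\bigl|(\mathbf P^{(k)}\beta)^T s^{(k)}\bigr| \;\geq\; \frac{c\,m_\beta}{k} - O(1/k^2) \;=\; \Omega(1/k).
\]
The key point is that this bound is \emph{uniform} over $\{\pm 1\}^q$, so I never have to track which pattern actually emerges from the cascade in the transition regime $|\bx_k| \sim 1/k$.

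To finish, I apply the triangle inequality to the recursion:
\[
|\bx_{k+1}| + \bigl(1 - c/k + O(1/k^2)\bigr)|\bx_k| \;\geq\; \bigl|(\mathbf P^{(k)}\beta)^T s^{(k)}\bigr| \;\geq\; \frac{c\,m_\beta}{k} - O(1/k^2).
\]
For $k$ sufficiently large that the remainder is below $c m_\beta/(4k)$, this forces $\max(|\bx_k|,|\bx_{k+1}|) \geq c m_\beta/(4k)$. Hence $\sup_{k'>k}|\bx_{k'}| \geq c m_\beta/(8k)$, which is exactly the claimed $\Omega(1/k)$ lower bound.

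The only genuinely delicate step is the second one: carefully bookkeeping the $O(1/k^2)$ remainder in the Neumann expansion of $\mathbf P^{(k)}\beta$ uniformly in the structural constants of the RK tableau (entries of $A$, $\omega$, and $c$), so that the comparison to $m_\beta/k$ is eventually decisive. Everything else, in particular the entire role of the non-cancellability condition, is automatic once that expansion is packaged into the uniform lower bound on the additive term.
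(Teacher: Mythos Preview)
Your argument is correct and in fact more streamlined than the paper's. Both proofs ultimately hinge on the same observation: the additive part of the scalar update is, to leading order, $(c/k)\,\beta^T s^{(k)}$, and non-cancellability pins $|\beta^T s^{(k)}|\ge m_\beta>0$ uniformly over all $s^{(k)}\in\{\pm1\}^q$. The difference is in how this leading-order expansion is reached. The paper proceeds in three stages: it first proves an \emph{upper} bound $|\bx_k|\le C_2/k$ via a case analysis on whether all intermediate signs $\bar s_i$ coincide with $\sign(\bx_k)$ or not (this is where the feasibility hypothesis $0<q\mathbf P^{(k)}\beta<1$ is actually used); it then leverages that upper bound, together with the Woodbury identity, to squeeze each individual $|\xi_i|$ into a window $c/(c+k)\pm O(1/k^2)$; only then does it invoke non-cancellability to lower-bound $|\sum_i\beta_i\xi_i|$ and close via a separate ``$|x_{k+1}|=|x_k-c_k/k|$ with $c_k$ not decaying'' lemma. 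Your route bypasses all of this: by expanding $\mathbf P^{(k)}\beta$ directly through the finite Neumann series (nilpotency of $A^T\Gamma^{(k)}$), you extract the leading term $(c/k)\beta$ of the \emph{coefficient vector} itself, uniformly in the sign pattern, so the upper bound on $|\bx_k|$ and the per-stage $|\xi_i|$ control are never needed. The triangle-inequality finish $\max(|\bx_k|,|\bx_{k+1}|)\ge c\,m_\beta/(4k)$ then replaces the paper's auxiliary lemma. The trade-off is that the paper's detour yields, as a byproduct, the matching upper bound $|\bx_k|=O(1/k)$ and the per-stage estimates on $|\xi_i|$, neither of which you obtain; for the proposition as stated, however, your approach is both shorter and avoids the feasibility assumption in the lower-bound step.
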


The proof is in Appendix \ref{app:sec:negativeresults}. 
The assumption of a ``non-cancellable" choice of $\beta_i$ may seem strange, but in fact it is true for most of the higher order  Runge-Kutta methods. More importantly, the assumption doesn't matter in practice; even if we force $\beta_i$'s to be all equal, our numerical experiments do not show much performance difference in this toy problem.  (Translation: do not design your Runge Kutta method for the $\beta$'s to be cancel-able in hopes of achieving a better rate!)

Proposition \ref{prop:main-rungekutta-negative} implies a $\Omega(1/k)$ bound on $|\bx_k|$. To extend it to an $\Omega(1/k)$ bound on $f(\bx_k)-f^*$, note that whenever $|\bx| \geq \varepsilon$,
\[
\frac{f(\bx_k)-f^*}{f(\bx_0)-f^*} \geq \frac{|\bx_k|}{2|\bx_0|}
\]
for $\varepsilon$ \emph{arbitrarily small}. 
\begin{corollary}
Although higher order RK methods may provide constant factor improvements, the worst best case bound for (\multFW), for any RK method, is of order $O(1/k)$.
\end{corollary}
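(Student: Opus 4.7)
The plan is to turn the iterate lower bound of Proposition~\ref{prop:main-rungekutta-negative} into a function-value lower bound by exploiting the linear regime of the Huber-like objective, and then combine with the matching upper bound of Proposition~\ref{prop:main-rungekutta-positive}. The toy problem \eqref{eq:toyproblem} is explicitly designed for this: outside the band $|\bx|<\varepsilon$, the function is affine with slope $\varepsilon$, which is where the LMO $\lmo_{[-1,1]}(\bx) = -\sign(\bx)$ becomes constant and the method ``zig-zags'' between $\pm 1$. So essentially I want to show that the algorithm spends enough iterates in the linear regime that the suboptimality inherits the $\Omega(1/k)$ bound.

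First I would recall the setup: start at $\bx_0 = 1$, so $f(\bx_0)-f^* = \varepsilon - \varepsilon^2/2$, and by Proposition~\ref{prop:main-rungekutta-negative} there is a subsequence $k'>k$ with $|\bx_{k'}| \geq C/k$ for an absolute constant $C$ independent of $\varepsilon$ (crucially, the argument in Proposition~\ref{prop:main-rungekutta-negative} is purely about the recursion induced by sign flips, so the constant does not depend on $\varepsilon$). Now fix any target rate faster than $O(1/k)$ that we would like to rule out, say $O(1/k^{1+\delta})$. Choose $\varepsilon$ small enough that the lower bound $|\bx_{k'}| \geq C/k'$ puts $\bx_{k'}$ into the linear regime $|\bx_{k'}|\geq \varepsilon$ for all $k'$ up to any prescribed horizon. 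For such indices, directly from the piecewise definition of $f$,
\[
f(\bx_{k'}) - f^* = \varepsilon|\bx_{k'}| - \varepsilon^2/2 \geq \tfrac{\varepsilon}{2}|\bx_{k'}|,
\]
which combined with $f(\bx_0)-f^* \leq \varepsilon$ yields the ratio bound $(f(\bx_{k'})-f^*)/(f(\bx_0)-f^*) \geq |\bx_{k'}|/(2|\bx_0|)$ quoted in the excerpt. This immediately gives $f(\bx_{k'})-f^* = \Omega(1/k')$.

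Putting the two sides together: Proposition~\ref{prop:main-rungekutta-positive} gives the universal upper bound $f(\bx_k)-f^* \leq O(1/k)$ for any (\multFW) scheme satisfying the feasibility condition of Proposition~\ref{prop:RK-feas}, and the construction above exhibits a smooth, Lipschitz instance on which, for any non-cancellable $\beta$ and any order $q$, the suboptimality cannot decay faster than $\Omega(1/k)$ along a subsequence. Thus no choice of $(A,\beta,\omega)$ can fundamentally improve the order of convergence; at best, higher order schemes reduce the constant in front of $1/k$.

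The main obstacle I anticipate is a subtle one already flagged by the $\varepsilon \to 0$ step: one must verify that the constant $C$ in the lower bound $|\bx_{k'}|\geq C/k'$ of Proposition~\ref{prop:main-rungekutta-negative} is independent of $\varepsilon$, so that shrinking $\varepsilon$ does not simultaneously degrade the iterate bound. This is plausible because the LMO output in the linear regime does not depend on $\varepsilon$ (only on $\sign(\bx)$), so the recursion producing the $\Omega(1/k)$ lower bound is $\varepsilon$-free once we are in that regime; the role of $\varepsilon$ is only to guarantee $L$-smoothness and $1$-Lipschitzness of $f$, which are preserved uniformly as $\varepsilon \to 0$. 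A careful reader would want this spelled out, but no new computation is required beyond what is already proved upstream.
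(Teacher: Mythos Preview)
Your proposal is correct and follows essentially the same route as the paper: derive the $\Omega(1/k)$ bound on $|\bx_{k'}|$ from Proposition~\ref{prop:main-rungekutta-negative}, then transfer it to $f(\bx_{k'})-f^*$ via the affine regime of the Huber toy problem using exactly the ratio $\frac{f(\bx_k)-f^*}{f(\bx_0)-f^*}\geq \frac{|\bx_k|}{2|\bx_0|}$ for $|\bx_k|\geq \varepsilon$, and pair this lower bound with the $O(1/k)$ upper bound of Proposition~\ref{prop:main-rungekutta-positive}. Your added discussion about the $\varepsilon$-independence of the iterate-level constant is a useful clarification that the paper leaves implicit.
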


Although this negative result is disappointing, it is valuable to know that simple multistep enhancements of traditional optimization methods may not single-handily produce miraculously better convergence rates.


\section{AVERAGED FRANK-WOLFE METHOD}
We now propose an infinite-window LMO-averaging Frank-Wolfe (AvgFW) method, by replacing $\bs_k$ with an averaged version $\bar \bs_k$. 
\[
\begin{array}{rcl}
\bs_k &=& \lmo_\mD(\bx_k)\\[1ex]
\bar \bs_{k} &=& \bar \bs_{k-1} + \beta_k (\bs_k-\bar \bs_{k-1})\\[1ex]
 \bx_{k+1} &=&  \bx_k + \gamma_k (\bar \bs_k - \bx_k)
\end{array}
\tag{\AvgFW}
\]

where $\beta_k = (\frac{c}{c+k})^p$ for $c \geq 0$ and $0< p \leq 1$. \footnote{Recall that $\gamma_k = c/(c+k)$. While it is possible to use $\beta_k = b/(b+k)$ with $b\neq c$ in practice, our proofs considerably simplify when the constants are the same.}
Here, the smoothing in $\bar \bs_k$ has two roles.
First, averaging reduces zigzagging; at every $k$, $\bar \bs_k$ is a convex combination
 of past $\bs_k$, and has a smoothing effect that qualitatively also reduces zig-zagging; this is of importance should the user wish to use line search or momentum-based acceleration.
 Second, this forces the discretization  term $\|\bx_{k}-\bar \bs_k\|_2$ to \emph{decay}, allowing for faster numerical performance. 
 Figure \ref{fig:trajectories} shows a simple 2D example of our (\AvgFW) method (right) compared to the usual (\FW) method (left) and the Runge-Kutta multistep version (\multFW) (center). Note that (\AvgflowFW) is fundamentally different than (\FWflow), and is better mirrored by (\AvgFW).
 
 \begin{figure}
\centering
\includegraphics[width=\linewidth,trim={1cm 0 1cm 0},clip]{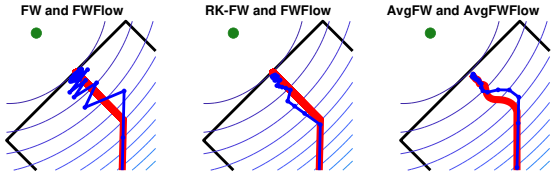}
\caption{Panels showing trajectory behavior of (\FW) (left), (\multFW) (center), and (\AvgFW) (right). The method trajectories (blue) and underlying flow (red) are both shown.}
\label{fig:trajectories}
\end{figure}

Using similar tricks as in \cite{jacimovic1999continuous}, we may view (\AvgFW) as an Euler discretization (with $\Delta = 1$) of the following dynamical system
\[
\begin{array}{rcl}
s(t) &=& \lmo_\mD(x(t))\\[1ex]
{\dot {\bar s}(t)}&=&  \beta(t) (s(t)-\bar s(t))\\[1ex]
\dot x(t) &=&   \gamma(t) (\bar s(t)-x(t))
\end{array}
\tag{\AvgflowFW},
\]
where $\gamma(t)$, $\beta(t)$ are such that $\gamma_k = \gamma(k\Delta)$ and $\beta_k = \beta(k\Delta)$.

\subsection{Global convergence}
We start by showing the method converges, with no assumptions on sparsity or manifold identification. We note that in practice we see faster convergence of (\AvgFW) compared to (\FW), despite weaker theoretical rates.


\begin{theorem}[Global rates]  Take $0< p < 1$, and assume $f$ is $L$-smooth and $\mu$-strongly convex.
\begin{itemize}\itemsep 0pt
    \item For $\beta(t) = \left(\frac{c}{c+t}\right)^p$, the flow (\AvgflowFW) satisfies
$f(x(t)) \leq O\left(\frac{1}{t^{1-p}}\right).$
\item For $\beta_k = \left(\frac{c}{c+k}\right)^p$, the method (\AvgFW) satisfies
$f(\bx_{k}) \leq O\left(\frac{1}{k^p}\right).$
\end{itemize}

\end{theorem}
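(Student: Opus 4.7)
The plan is to run a Lyapunov argument on the gap $\mE(t) = f(x(t))-f^*$ (and $\mE_k = f(\bx_k)-f^*$), isolate the error from using the averaged oracle $\bar s$ in place of the exact LMO $s$, and then solve the resulting ODE/recursion under the prescribed $\gamma$ and $\beta$.

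For the flow, differentiating and writing $\bar s = s + (\bar s - s)$ gives
\begin{equation*}
\dot\mE(t) = \gamma(t)\,\nabla f(x)^T(s-x) + \gamma(t)\,\nabla f(x)^T(\bar s - s).
\end{equation*}
The LMO property and convexity bound the first term by $-\gamma(t)\mE(t)$, exactly as in \eqref{eq:diffeq-vanilla}, while Cauchy--Schwarz bounds the second by $G\gamma(t)\|\bar s(t)-s(t)\|$, where $G = \sup_{x\in \mD}\|\nabla f(x)\|$. The heart of the proof is then to bound the averaging lag $\|\bar s - s\|$: using $\dot{\bar s}=\beta(s-\bar s)$, variation of parameters writes $\bar s(t)$ as an exponential moving average of past $s(\tau)$ with effective horizon $1/\beta(t)\sim t^p$, over which $s(\cdot)$ can move by at most $\mathrm{diam}(\mD) = 2D$. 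Feeding this into Gr\"onwall's inequality with $\gamma(t) = c/(c+t)$ should give $\mE(t) = O(t^{-(1-p)})$; the extra factor $t^p$ over the vanilla flow rate is precisely the averaging cost.

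For the method, $L$-smoothness produces the discrete analogue
\begin{equation*}
\mE_{k+1}-\mE_k \;\leq\; -\gamma_k\mE_k + G\gamma_k\|\bar \bs_k - \bs_k\| + \tfrac{L\gamma_k^2}{2}\,\|\bar \bs_k-\bx_k\|_2^2.
\end{equation*}
Since $\|\bar \bs_k-\bx_k\|_2 \leq 2D$, the $L$-smoothness discretization term is $O(1/k^2)$, summable, and therefore does not drive the rate. A discrete Gr\"onwall argument with $\gamma_k=c/(c+k)$ and $\beta_k=(c/(c+k))^p$, together with the discrete version of the averaging-lag bound, then yields $\mE_k=O(k^{-p})$.

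The delicate step is the averaging-lag bound $\|\bar s - s\|$ (resp.\ $\|\bar \bs_k-\bs_k\|$): the LMO output can jump discontinuously whenever $\nabla f(x(t))$ crosses a normal-cone boundary, so this bound cannot be pointwise and must be carried out in an integrated/amortized sense that meshes with the Gr\"onwall step. Matching the decay rates of $\gamma$ and $\beta$ against the effective averaging window so the exponents come out to $1-p$ for the flow and $p$ for the method is the only subtle calculation; $\mu$-strong convexity of $f$, if needed, can be invoked to convert $\|x-x^*\|^2$ bounds into $\mE$ bounds when closing the recursion. Beyond that, the argument is standard smoothness/convexity bookkeeping.
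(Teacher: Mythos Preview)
The Cauchy--Schwarz step is where this breaks: $\|\bar s(t)-s(t)\|$ does not decay. Whenever $x^*$ lies on a nontrivial facet the LMO $s(t)$ keeps hopping among that facet's vertices while $\bar s(t)$ settles toward their interior, so $\|\bar s - s\| = \Theta(D)$ forever, and feeding an $O(\gamma(t))$ error into Gr\"onwall gives only $\mE(t)=O(1)$. You flag the jumps and invoke an ``integrated/amortized'' fix, but as written there is none: $\nabla f(x)^T(\bar s - s)\ge 0$ pointwise (since $s$ is the minimizer), so nothing cancels, and your heuristic that ``$s(\cdot)$ can move by at most $2D$ over the window'' is just the trivial diameter bound. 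What is actually needed is to abandon the norm and control the \emph{signed inner product} directly, e.g.\ by comparing $\nabla f(x(t))^Ts(\tau)$ to $\nabla f(x(\tau))^Ts(\tau)$ through $L$-smoothness of the gradient over the displacement $\|x(t)-x(\tau)\|\le 2D\int_\tau^t\gamma$. That is the whole argument, not a bookkeeping detail.

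The paper organizes this by tracking a second quantity $g(t):=\nabla f(x(t))^T(\bar s(t)-x(t))$ and differentiating \emph{it}. The key identity is $\nabla f(x)^T\dot{\bar s}=\beta(t)\,\nabla f(x)^T(s-\bar s)=-\beta(t)\,\gap(x)-\beta(t)\,g(t)$: the true gap reappears with weight $\beta$ rather than $\gamma$, and the remainder is self-referential in $g$. Together with the Hessian term $\dot x^T\nabla^2 f(x)(\bar s-x)=O(\gamma)$ this yields $\dot g\le-(\beta+\gamma)g-\beta\,\gap(x)+O(\gamma)$, which is solved by an integrating factor (Lemma~\ref{lem:continuous_energy}); balancing $\beta(t)h(t)$ against the $O(\gamma)$ forcing is what produces the $t^{-(1-p)}$ rate. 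The discrete case (Lemma~\ref{lem:discrete_energy}, Theorem~\ref{th:globalrate}) runs the same scheme on $\bg_k:=\nabla f(\bx_k)^T(\bar\bs_k-\bx_k)$, unrolling the $\bar\bs$ recursion and using smoothness to convert each $\nabla f(\bx_k)^T\bs_i$ into $-\gap(\bx_i)$ plus controllable drift; this is where the accumulated weights $\beta_{k,i}$ and the restriction $p<1$ enter.
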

The proofs are in Appendix \ref{app:sec:averagefw}. Note that the rates are not exactly reciprocal. In terms of analysis, the method is that  allows the term $\beta_k\gap(\bx_k)$ to take the weight of an entire step, whereas in the flow, the corresponding term is infinitesimally small. This is a curious disadvantage of the flow analysis; since most of our progress is accumulated in the last step, the method exploits this more readily than the flow, where the step is infinitesimally small.

\subsection{Manifold identification}
Now that we know that the method converges, we may discuss the local convergence regime, characterized by manifold identification. Specifically, 
the \emph{manifold is identified} at $\bar k$ if any $k > \bar k$, any $\lmo_\mD(\bx_k)$ is also an $\lmo_\mD(\bx^*)$. 
For example, when $\mD$ is the one-norm ball, then the manifold is identified when for all $k\geq \bar k$, $\bs_k$ are 1-hot vectors with nonzero supports contained within the sparsity of $\bx^*$. This is \emph{guaranteed to happen for some finite $\bar k$} under mild degeneracy assumptions, and is the basis for gap-safe screening procedures \citep{ndiaye2017gap,sun2020safe}.

\subsection{Accelerated local convergence}

\begin{theorem}[Local rate]
Assume  $f$ is $L$-smooth and $\mu$-strongly convex. After manifold identification, the flow (\AvgflowFW) satisfies
\[
f(x(t))-f(x^*) \leq \frac{\log(t)}{t^c}.
\]
\end{theorem}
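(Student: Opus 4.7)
The plan is to use the objective suboptimality $\mathcal E(t):=f(x(t))-f^*$ as the Lyapunov function, mirroring the vanilla (\FWflow) analysis in~\eqref{eq:diffeq-vanilla} but carefully tracking the discretization-type error introduced by replacing $s$ with $\bar s$. We take $\beta(t)=\gamma(t)=c/(c+t)$, since only $c$ appears in the claimed rate. The key structural consequence of manifold identification at time $\bar t$ is that $s(\tau)\in \mathcal A^\star := \lmo_\mD(x^*)$ for every $\tau\ge \bar t$, so the averaging dynamics confine $\bar s(t)$ to $\conv(\mathcal A^\star)$ up to a residual from pre-identification data. On this face, first-order optimality gives the constant-gradient identity $\nabla f(x^*)^T a = \nabla f(x^*)^T x^*$ for all $a\in \conv(\mathcal A^\star)$; this is what enables the analysis.

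Differentiating, $\dot{\mathcal E}(t)=\gamma(t)\,\nabla f(x)^T(\bar s - x)$. I would split $\bar s-x=(x^*-x)+(\bar s-x^*)$. The first piece contributes $\nabla f(x)^T(x^*-x)\le -\mathcal E(t)-\tfrac{\mu}{2}\|x-x^*\|^2$ by $\mu$-strong convexity. By the constant-gradient identity, the second piece equals $(\nabla f(x)-\nabla f(x^*))^T(\bar s-x^*)$, bounded by $L\|x-x^*\|\,\|\bar s-x^*\|$ via $L$-smoothness. Absorbing the $\|x-x^*\|^2$ factor with Young's inequality at weight $\mu$ yields the clean energy inequality
\[
\dot{\mathcal E}(t)\;\le\;-\gamma(t)\,\mathcal E(t)+\gamma(t)\,\tfrac{L^2}{2\mu}\,\|\bar s(t)-x^*\|^2.
\]

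The crux of the argument is establishing a bound $\|\bar s(t)-x^*\|^2\le K/t^c$ for $t\ge \bar t$. The averaging ODE $\dot{\bar s}=\beta(s-\bar s)$ has integrating factor $((c+t)/c)^c$, so any pre-identification contribution to $\bar s(t)$ already decays at rate $((c+\bar t)/(c+t))^c = O(1/t^c)$. For $t\ge\bar t$, since $s(\tau)\in \mathcal A^\star$ and $x(t)\to x^*$, I would argue that the LMO selections asymptotically barycenter at $x^*\in\conv(\mathcal A^\star)$, driving $\bar s(t)\to x^*$. To make this quantitative, a natural device is a coupled Lyapunov $V(t)=\mathcal E(t)+\lambda(t)\|\bar s(t)-x^*\|^2$ with a time-varying weight $\lambda(t)$, chosen so that the troublesome cross-term $(\bar s-x^*)^T(s-x^*)$ is controlled by combining $L$-smoothness with the LMO optimality $\nabla f(x)^T s\le \nabla f(x)^T x^*$. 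This step is the main obstacle: in the multi-atom case $s(t)$ generally does not converge, so showing $\|\bar s(t)-x^*\|^2\to 0$ quantitatively requires exploiting both the strong convexity of $f$ and the specific geometry of the identified face.

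Once the bound on $\|\bar s-x^*\|^2$ is in hand, the finish is routine. Multiplying the energy inequality by the integrating factor $((c+t)/c)^c$ gives a right-hand side of order
\[
((c+t)/c)^c\cdot \gamma(t)\cdot \tfrac{K}{t^c} \;=\; \tfrac{Kc^{1-c}(c+t)^{c-1}}{t^c}\;=\; O\!\left(\tfrac{1}{c+t}\right).
\]
Integrating from $\bar t$ to $t$ yields $((c+t)/c)^c\,\mathcal E(t) \le O(\log t)$, so $\mathcal E(t)\le O(\log(t)/t^c)$ as claimed. The $\log$ factor comes precisely from the $1/(c+t)$ tail, i.e.\ the residual mismatch between the averaged $\bar s$ and $x^*$; in the degenerate single-atom case $\|\bar s-x^*\|^2$ would instead decay as $O(1/t^{2c})$ and the $\log$ disappears, recovering the sharp $O(1/t^c)$ rate of (\FWflow).
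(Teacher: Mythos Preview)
Your proposal has a genuine gap at precisely the point you flag as the ``main obstacle'': the bound $\|\bar s(t)-x^*\|_2^2\le K/t^c$. You sketch a coupled-Lyapunov route but do not carry it out, and in fact there is no reason to expect $\bar s(t)$ to approach $x^*$ this fast. The averaged direction $\bar s(t)$ is a running mean of atoms $s(\tau)$ that individually do not converge; the paper's own orbiting lemma (Appendix~\ref{app:sec:averaging}) only extracts $\|\bar s-x\|_2^2=O(t^{-(q/2+p-1)})$ from $h(t)=O(1/t^q)$, which is nowhere near $O(1/t^c)$ for arbitrary $c$. So anchoring your decomposition at the fixed point $x^*$ produces a residual you cannot control at the rate your integration step requires.

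The paper sidesteps this entirely by comparing $\bar s(t)$ not to $x^*$ but to the \emph{moving} auxiliary point
\[
\hat s(t)=\bar\beta_t^{-1}\int_0^t\beta_{t,\tau}\,\tilde s(\tau)\,d\tau,\qquad \tilde s(\tau)=\argmin{u\in\conv(\mS(x^*))}\|s(\tau)-u\|_2.
\]
After identification $\tilde s(\tau)=s(\tau)$, so $\bar s-\hat s$ carries only pre-$\bar t$ contributions, and the integrating factor alone damps these at rate $O(1/(c+t)^c)$, with no appeal to strong convexity. For the remaining piece the paper uses that after identification the LMO set at $x(t)$ coincides with $\mS(x^*)$; since $\hat s(t)\in\conv(\mS(x^*))$ is then a convex combination of minimizers of $s\mapsto\nabla f(x)^T s$, one gets $\nabla f(x)^T(\hat s-x)=\nabla f(x)^T(s-x)=-\gap(x)\le -h(x)$ directly. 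This yields $\dot h\le -\gamma h + G\gamma\cdot O(1/t^c)$ and hence $h(t)=O(\log t/t^c)$. Your constant-gradient identity at $x^*$ is in the right spirit, but by anchoring at $x^*$ rather than at the running $\hat s$ you are forced to bound $\|\bar s-x^*\|$, which is exactly what the paper's choice of comparison point is designed to avoid.
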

The proof is in Appendix \ref{app:sec:local}.


\begin{theorem}[Local rate]
Assume that $f$ is $\mu$-strongly convex, and pick $c \geq 3p/2+1$. After manifold identification, the method (\AvgFW) satisfies
\[
f(\bx_k)-f(\bx^*) = O(1/k^{3p/2}).
\]
\end{theorem}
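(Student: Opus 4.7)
The plan is to mirror the standard $L$-smoothness per-step analysis for (\AvgFW), but to exploit, once the manifold is identified, that $\bs\mapsto \nabla f(\bx^*)^T\bs$ is \emph{constant} on the optimal face $F^*=\conv(\lmo_\mD(\bx^*))$. Since all $\bs_k\in F^*$ for $k\geq \bar k$, the averaged atom $\bar \bs_k$ is a convex combination of such $\bs_k$'s and hence also lies in $F^*$, and so does $\bx^*$. This gives the key cancellation $\nabla f(\bx^*)^T(\bar \bs_k - \bx^*) = 0$, which is what lets us replace the usual non-vanishing discretization term $\|\bs_k - \bx_k\|=O(D)$ of vanilla (\FW) by a quantity controlled by $\|\bx_k-\bx^*\|\cdot \|\bar \bs_k-\bx^*\|$.

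\paragraph{Per-step recursion on $\mE_k$.} Starting from $L$-smoothness,
\[
\mE_{k+1} - \mE_k \leq \gamma_k \nabla f(\bx_k)^T(\bar \bs_k - \bx_k) + \tfrac{L\gamma_k^2}{2}\|\bar \bs_k - \bx_k\|^2,
\]
I would split $\bar \bs_k - \bx_k = (\bar \bs_k-\bx^*)+(\bx^*-\bx_k)$. Strong convexity handles $\nabla f(\bx_k)^T(\bx^*-\bx_k)\leq -\mE_k - \tfrac{\mu}{2}\|\bx_k-\bx^*\|^2$, while the cancellation above together with $L$-smoothness of $\nabla f$ gives
\[
\nabla f(\bx_k)^T(\bar \bs_k-\bx^*) = (\nabla f(\bx_k)-\nabla f(\bx^*))^T(\bar \bs_k-\bx^*) \leq L\|\bx_k-\bx^*\|\,\sigma_k,
\]
where $\sigma_k := \|\bar \bs_k - \bx^*\|$. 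A Young/AM-GM split of the cross-term against the strong-convexity slack, combined with $\|\bar \bs_k-\bx_k\|^2\leq 2\sigma_k^2 + (4/\mu)\mE_k$, produces a recursion of the form
\[
\mE_{k+1} \leq (1-\gamma_k + O(\gamma_k^2))\mE_k + O(\gamma_k)\,\sigma_k^2.
\]
A discrete-Gronwall integration with $\gamma_k=c/(c+k)$ then yields $\mE_k = O(1/k^{3p/2})$ \emph{provided} $\sigma_k^2 = O(1/k^{3p/2})$ and $c\geq 3p/2+1$ (the $+1$ is precisely the slack needed to absorb the $O(\gamma_k^2)$ correction term).

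\paragraph{Controlling $\sigma_k$, and the main obstacle.} The crux is therefore to prove the auxiliary bound $\sigma_k^2 = O(1/k^{3p/2})$, which is strictly stronger than what one would get from $\bar \bs_k\in F^*$ and the global $O(1/k^p)$ rate alone. I would set up a companion recursion from the averaging update,
\[
\sigma_{k+1}^2 = (1-\beta_{k+1})^2\sigma_k^2 + 2\beta_{k+1}(1-\beta_{k+1})\langle \bar \bs_k-\bx^*,\bs_{k+1}-\bx^*\rangle + \beta_{k+1}^2\|\bs_{k+1}-\bx^*\|^2,
\]
and bound the cross term by using that $\bs_{k+1}$ minimizes $\nabla f(\bx_{k+1})^T\bs$ over $F^*$: because $\nabla f(\bx^*)^T\bs$ is constant on $F^*$, the ``tangential'' component of $\bs_{k+1}-\bx^*$ along $F^*$ is driven only by $\nabla f(\bx_{k+1})-\nabla f(\bx^*)$, whose norm is $O(\|\bx_{k+1}-\bx^*\|) = O(\sqrt{\mE_{k+1}})$. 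Feeding this back into the $\sigma$-recursion and coupling it with the $\mE$-recursion through a joint Lyapunov function $V_k = \mE_k + \lambda\sigma_k^2$ (with $\lambda$ tuned so the cross-contributions cancel) should close the bootstrap and give $V_{k+1}\leq (1-\Theta(\gamma_k))V_k + O(1/k^{3p/2+1})$, whence the stated rate.

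\paragraph{Where I expect difficulty.} This tangential-cancellation step is the delicate piece and the main obstacle: naively $\|\bs_{k+1}-\bx^*\|=\Theta(1)$, so without genuinely exploiting the bias in the LMO direction induced by strong convexity and manifold identification, the cross term cannot be made small enough and $\sigma_k^2$ only satisfies the trivial $\Theta(1/k^{p})$-type bound, collapsing the analysis back to the global $O(1/k^p)$ rate. Making this cancellation quantitative — essentially showing that averaging converts the zig-zagging of $\bs_k$ across $F^*$ into genuine convergence of $\bar \bs_k$ to $\bx^*$ at rate $\Theta(k^{-3p/4})$ — is where I would expect the bulk of the technical work, and it also explains the precise threshold $c\geq 3p/2+1$ appearing in the hypothesis.
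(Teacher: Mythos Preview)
Your high-level intuition (after identification, $\nabla f(\bx^*)^T\bs$ is constant on the optimal face, so the drift of $\bar\bs_k$ toward $\bx^*$ should eventually dominate) is correct, but the route you sketch diverges from the paper's and contains a real gap at the step you yourself flag.

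\textbf{The gap.} Your coupled recursion needs $\sigma_k^2=\|\bar\bs_k-\bx^*\|_2^2=O(1/k^{3p/2})$, and your plan to obtain it is a ``tangential cancellation'' in the $\sigma$-recursion. But the cross term $\langle \bar\bs_k-\bx^*,\bs_{k+1}-\bx^*\rangle$ is not controlled by the LMO property in the way you suggest: the fact that $\bs_{k+1}$ minimizes $\nabla f(\bx_{k+1})^T\bs$ over $F^*$ says nothing about the component of $\bs_{k+1}-\bx^*$ along $\bar\bs_k-\bx^*$, and the magnitude $\|\bs_{k+1}-\bx^*\|$ is $\Theta(1)$ regardless of how close $\nabla f(\bx_{k+1})$ is to $\nabla f(\bx^*)$. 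So without a genuinely new idea, your $\sigma$-recursion only yields $\sigma_{k+1}^2\le (1-\beta_{k+1})^2\sigma_k^2+O(\beta_{k+1}\sigma_k)+O(\beta_{k+1}^2)$, which does not self-improve. A secondary issue: $\bar\bs_k$ is \emph{not} in $F^*$ exactly, since it carries mass from pre-identification atoms; the paper handles this with an explicit proxy $\hat\bs_k\in F^*$ and shows $\|\bar\bs_k-\hat\bs_k\|_2=O(1/k^c)$.

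\textbf{What the paper does instead.} The paper never tries to show $\bar\bs_k\to\bx^*$. It works with $\bv_k:=\bar\bs_k-\bx_k$ and proves a purely structural ``discrete averaging'' lemma: if (i) $\|\bv_k\|_2\le D$, (ii) $\|\bv_{k+1}-\bv_k\|_2\le \beta_k D$ (slow variation, which follows directly from the averaging update), and (iii) $\tfrac12\|\sum_{i\ge k}\gamma_i\bv_i\|_2^2\le C/k^q$, then $\|\bv_k\|_2^2=O(1/k^{\min\{q/2+p-1,\,2p\}})$. Condition (iii) holds because the update telescopes, $\sum_{i\ge k}\gamma_i\bv_i=\bx^*-\bx_k$, and strong convexity gives $\|\bx^*-\bx_k\|_2^2\le (2/\mu)\mE_k=O(1/k^q)$ with $q=p$ from the global rate. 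The proof of the lemma is a two-step discrete differencing of (iii) that isolates $\gamma_k\|\bv_k\|_2^2$ on the left, with the error from differencing controlled by (ii). This yields $\|\bar\bs_k-\bx_k\|_2^2=O(1/k^{3p/2-1})$. The per-step inequality is then simpler than yours: since every $\hat\bs_k\in F^*$ attains the full gap after identification, one keeps $\nabla f(\bx_k)^T(\hat\bs_k-\bx_k)=-\gap(\bx_k)\le -\mE_k$ intact, and the only extra terms are $\gamma_k G\|\bar\bs_k-\hat\bs_k\|_2=O(1/k^{c+1})$ and the $L$-smoothness remainder $\tfrac{L}{2}\gamma_k^2\|\bar\bs_k-\bx_k\|_2^2=O(1/k^{3p/2+1})$. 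A standard recursion then gives $\mE_k=O(1/k^{3p/2})$ once $c\ge 3p/2+1$. Note in particular that the paper's recursion carries the discretization term at order $\gamma_k^2$, not the $O(\gamma_k)\sigma_k^2$ you incur by splitting at $\bx^*$ and applying Young; that extra factor of $\gamma_k$ is exactly why the weaker bound $\|\bv_k\|_2^2=O(1/k^{3p/2-1})$ suffices.
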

The proof is in Appendix \ref{app:sec:global}.
Although the proof requires $p<1$, in practice, we often use $p = 1$ and observe about a $O(1/k^{3/2})$ rate, regardless of strong convexity.

\section{NUMERICAL EXPERIMENTS}


We now compare the Frank-Wolfe method (\FW), its multistep form (\multFW), and its averaged form (\AvgFW), against two baseline methods:
\begin{itemize}
\item the averaged gradient method of \cite{abernethy2017frank},
\item and the away-step method of \cite{lacoste2015global}. 
\end{itemize}
In all cases, when appropriate, we also add line search (LS). Note that while the away-step method is theoretically a superior method to those we propose, it requires some computational overhead in maintaining and readjusting a full history of past atoms, and also always requires  line search to ensure feasiblity. Since \multFW~requires compute additional gradients, our  $x$-axis gives the number of gradient calls, instead of iterations, to ensure  fair  comparison.

\paragraph{Simulated compressive sensing.}
In Figure \ref{fig:compressed_sensing} we simulate compressive sensing using a $\ell_1$- norm ball constraint. Given $\bx_0\in \R^{m}$, a sparse ground truth vector with 10\% nonzeros, and given $A\in \R^{n\times m}$ with entries i.i.d. Gaussian, we generate $\by = A\bx_0$ (noiseless observation). The problem formulation is
\begin{equation}
\min_{x\in \R^n} \quad  \tfrac{1}{2}\|Ax-y\|_2^2\qquad
\mathrm{subject~to} \quad  \|x\|_1\leq\alpha.
\end{equation}

\begin{figure}[!htb]
  \includegraphics[width=\linewidth,trim={0cm 0 0cm 0},clip]{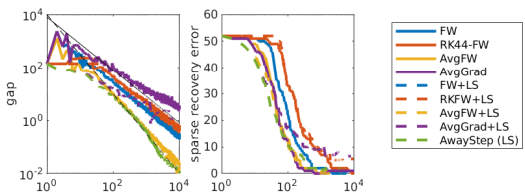}
  \caption{\textbf{Compressed sensing.} $n = m = 500$, $c = \beta = 2$, $p=1$.  }
  \label{fig:compressed_sensing}
\end{figure}
\paragraph{Signed compressive sensing.}
In Fig. \ref{fig:signed_compressive_sensing}, under the same parameter distribution as in the previous example, we now use a sampling model where $y_i = \sign(a_i^T\bx_0)$, and attempt to recover the sparsity of $\bx_0$ using sparse logistic regression
\begin{equation}
    \min_{x\in \R^n} \;  \frac{1}{m}\sum_{i=1}^{m}\log(1+\exp(-y_ia_i^Tx))\quad
    \mathrm{s.t.} \;  \|x\|_1\leq\alpha.
\end{equation}
Because signed compressive sensing is a harder task, we use a higher sampling ratio of 50x.




\begin{figure}[!htb]
  \includegraphics[width=\linewidth,trim={1cm 0 1cm 0},clip]{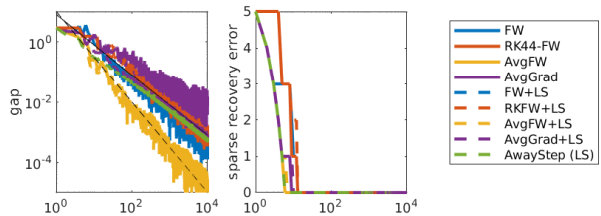}
  \caption{\textbf{Signed compressed sensing.}  $m = 5000$, $n = 100$, $c = \beta = 2$, $p=1$. } 
  \label{fig:signed_compressive_sensing}
\end{figure}


\section{CONCLUSION}
The main goal of this study is to see if the discretization error, which seems to be the primary cause of slow (\FW) convergence, can be attacked directly in order to produce a better method. First, we highlight the artifacts and pitfalls of discretization error (largely, bad zig-zagging step directions that are not amenable to momentum-based acceleration) and the regimes where they are most prominent (on the non-minimal boundary of a polytope, or in the interior of a set when line search is not used).

Next, we explore the use of \emph{higher order discretization methods}, which has been used in the past in gradient flow applications with observed success, and which has fundamentally better truncation errors with each growing order. However, we show that though this method gives constant factor improvements in practice, asymptotically the worst case convergence rate does not improve beyond $O(1/k)$. This is consistent with simulations that show initially fast convergence, but a ``tapering off'' once the residual convergence error is below that of the truncation error order guarantee. While this result is disappointing, it is important to highlight the main flaw of directly adapting this popular dynamical systems tool for optimization improvement. 

Finally, we explore an LMO-averaging method, which produces a smoother convergence trajectory and a fundamentally improved convergence rate (from $O(1/k)$ to up to $O(1/k^{3/2})$) with negligible computation and memory overhead.
This method is largely inspired from viewing the multistep method as a finite-window averaging method, which achieves finite constant order rate improvement--thus, only an infinite window seems to allow for non-constant order improvement.
Our numerical results show that, though our theoretical improvements are only local, the effect of this acceleration appears effective globally; moreover, manifold identification (or at least reduction to a small working set of nonzeros) appears almost immediately in many cases. 

\paragraph{Use in gap-safe screening rules.}
An important related application to these works is gap-safe convergence rates \cite{ndiaye2017gap,sun2020safe}, which produce sparsity guarantees given intermediate iterates $\bx_{k}$, provided the duality gap at $\bx_{k}$ is below some problem-dependent constant. Typically, these guarantees are strong, but slow to realize in practice, especially when gap convergence is slow. In this, (\AvgFW) offers a distinct advantage in its faster gap convergence, in providing this guarantee. 


\newpage
\bibliography{main.bbl}

\appendix
\onecolumn

\onecolumn
\section{ADDITIONAL EXPERIMENTS}

We include a few more experiments to expand beyond Runge-Kutta family (see Figure \ref{figs:additional_multi}). In general, we do not notice much difference in \multFW~with different multistep methods. It is consistent with our theory, which states that \multFW~is still $\Omega(1/k)$.

\begin{figure}[!htb]
\centering
  \includegraphics[width=0.4\linewidth,trim={0cm 0 0cm 0},clip]{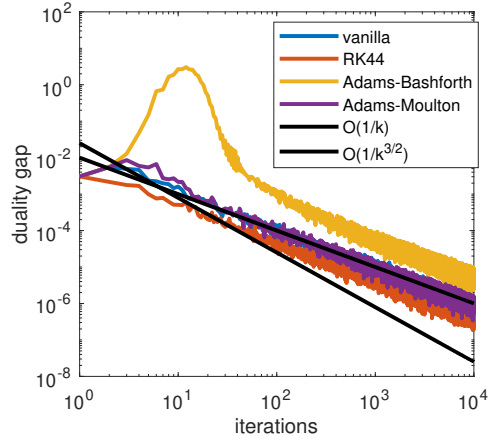}
         \caption{Additional multistep methods}
         \label{figs:additional_multi}
\end{figure}

\section{EXTRA TABLEAUS OF HIGHER ORDER RUNGE KUTTA METHODS}
\label{app:sec:tables}

\begin{itemize}
    \item Midpoint method
    \[
    A = \begin{bmatrix}
    0 & 0 \\ 1/2 & 0
    \end{bmatrix},
    \qquad
    \beta = \begin{bmatrix}
    0 \\1 
    \end{bmatrix},
    \qquad
    \omega = \begin{bmatrix}
    0\\1/2
    \end{bmatrix},\qquad
    \bz^{(1)} \approx \begin{bmatrix}
    -0.3810\\1.1429
    \end{bmatrix},\qquad
    \bz^{(2)} \approx \begin{bmatrix}
    -0.2222\\0.8889
    \end{bmatrix}
\]
     \item Runge Kutta 4th Order Tableau (44)
    \[
    A = \begin{bmatrix}
   0    & 0    & 0 & 0 \\
     1/2  & 0    & 0 & 0 \\
     0  & 1/2  & 0 & 0 \\
    0    & 0    & 1 & 0 
    \end{bmatrix},
    \qquad
    \beta = \begin{bmatrix}
  1/6   \\ 1/3  \\ 1/3 \\ 1/6
    \end{bmatrix},
    \qquad
    \omega = \begin{bmatrix}
    0   \\
        1/2 \\
        1/2 \\
        1  
    \end{bmatrix},\qquad
    \bz^{(1)} \approx \begin{bmatrix}
    0.2449\\
    0.5986\\
    0.5714\\
    0.3333
    \end{bmatrix}
    \]
    
        \item Runge Kutta 3/8 Rule Tableau (4)
    \[
    A = \begin{bmatrix}
   0    & 0    & 0 & 0 \\
     1/3  & 0    & 0 & 0 \\
     -1/3  & 1  & 0 & 0 \\
     1    & -1    & 1 & 0 \\
    \end{bmatrix},
    \qquad
    \beta = \begin{bmatrix}
 1/8   \\3/8  \\ 3/8 \\ 1/8
    \end{bmatrix},
    \qquad
    \omega = \begin{bmatrix}
   0    \\
        1/3  \\
        2/3 \\
        1  
    \end{bmatrix},\qquad
    \bz^{(1)} \approx \begin{bmatrix}
    0.1758\\
    0.6409\\
    0.6818\\
    0.2500
    \end{bmatrix}
    \]
\item 
Runge Kutta 5 Tableau
\[
    A = \begin{bmatrix}
 0 & 0 & 0 & 0 & 0 & 0\\
 1/4 & 0 & 0 & 0 & 0 & 0\\
  1/8 & 1/8 & 0 & 0 & 0 & 0\\
 0 & -1/2 & 1 & 0 & 0 & 0\\
 3/16 & 0 & 0 & 9/16 & 0 & 0\\
 -3/7 & 2/7 & 12/7 & -12/7 & 8/7 & 0\\
    \end{bmatrix},
    \qquad
    \beta = \begin{bmatrix}
7/90 \\ 0 \\ 32/90 \\ 12/90 \\ 32/90 \\7/90
    \end{bmatrix},
    \qquad
    \omega = \begin{bmatrix}
    0 \\
        1/4 \\
        1/4 \\
        1/2 \\
        3/4\\
        1 
    \end{bmatrix},\qquad
    \bz^{(1)} \approx \begin{bmatrix}
      0.1821\\
    0.0068\\
    0.8416\\
    0.3657\\
    0.9956\\
    0.2333
    \end{bmatrix}
    \]
\end{itemize}
In all examples, $\|\bz^{(k)}\|_\infty$ monotonically decays with $k$.

\section{EXTRA EXPERIMENTS}
\label{app:sec:experiments}


Figure \ref{fig:continuous_no_zigzag} quantifies this notion more concretely. We  measure ``zig-zagging energy" by averaging the deviation of each iterate's direction across $k$-step directions, for $k = 1,...,W$, for some measurement window $W$:
\[
\mE_{\mathrm{zigzag}}(\bx^{(k+1)},...,\bx^{(k+W)}) = \frac{1}{W-1}\sum_{i=k+1}^{k+W-1} \Big\|\underbrace{\left(I-\frac{1}{\|{\bar\bd}^{(k)}\|_2}\bar\bd^{(k)}({\bar\bd}^{(k)})^T\right)}_{\mb Q}\bd^{(i)}\Big\|_2,
\]
where $\bd^{(i)} = \bx^{(i+1)}-\bx^{(i)}$ is the current iterate direction and $\bar\bd^{(k)} = \bx^{(k+W)}-\bx^{(k)}$ a ``smoothed" direction. The projection operator $\mb Q$ removes the component of the current direction  in the direction of the smoothed direction, and we measure this ``average deviation energy."
We divide the trajectory into these window blocks, and report the average of these measurements $\mE_{\mathrm{zigzag}}$ over $T=100$ time steps (total iteration = $T/\Delta$). 
Figure \ref{fig:continuous_no_zigzag} (top table) exactly shows this behavior, where the problem is sparse constrained logistic regression minimization over several machine learning classification datasets~\citep{guyon2004result} (Sensing (ours), Gisette \footnote{Full dataset available at \url{https://archive.ics.uci.edu/ml/datasets/Gisette}. We use a subsampling, as given in 
\url{https://github.com/cyrillewcombettes/boostfw}.
} and Madelon \footnote{Dataset: \url{https://archive.ics.uci.edu/ml/datasets/madelon}})
are shown in Fig. \ref{fig:continuous_no_zigzag}.

\begin{figure*}
    \centering
    \begin{tabular}{lr}
    \begin{minipage}{.2\textwidth}
    \includegraphics[width=1.5in]{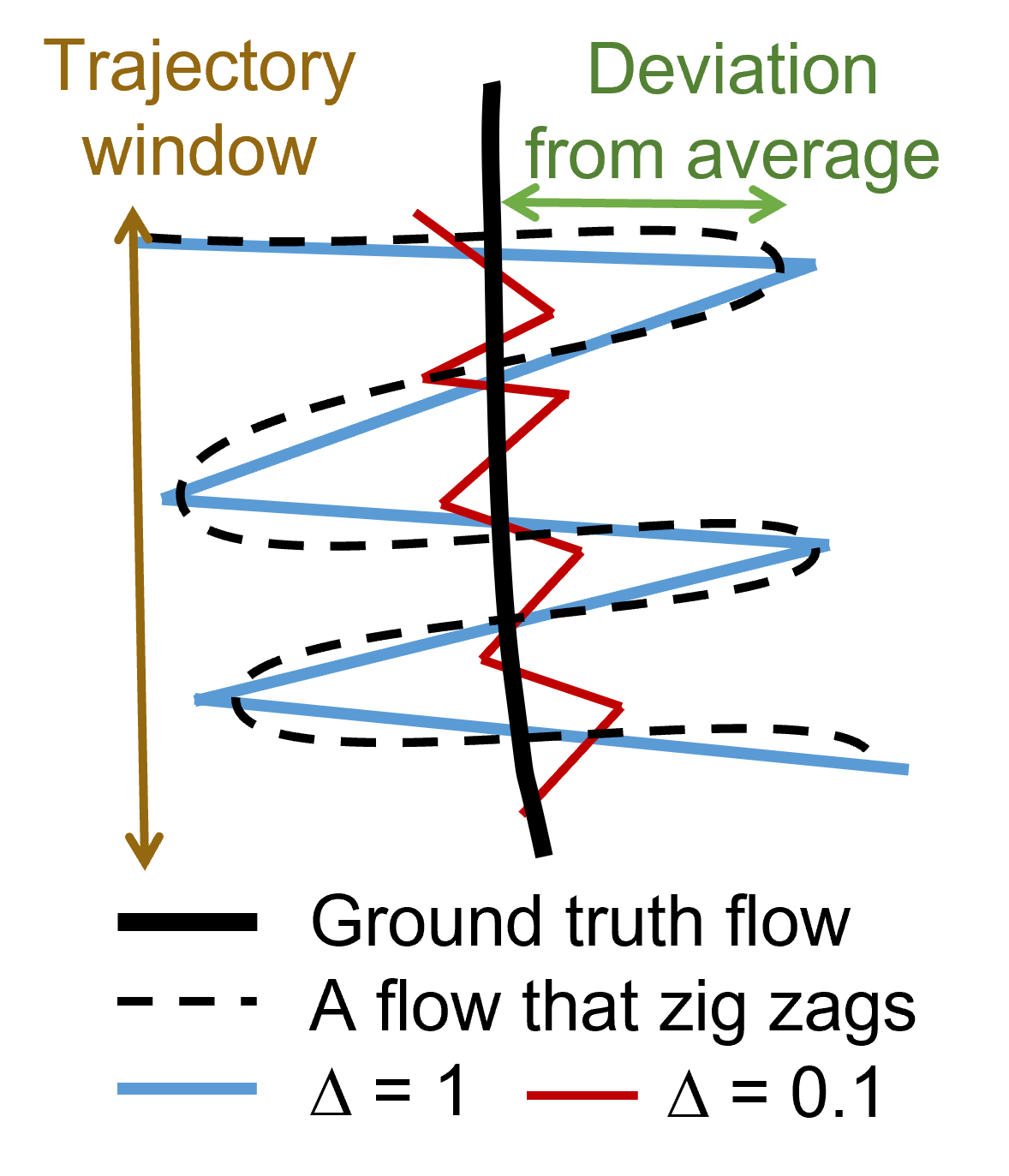}
    \end{minipage}
    &
    \begin{minipage} {.65\textwidth}
    \begin{tabular}{c}
    
    \begin{minipage}{.8\textwidth}
    \begin{tabular}[t]{c|c|c|c}
    \hline
    Test set&$\Delta = 1$&$\Delta = 0.1$ & $\Delta = 0.01$\\
    \hline
    Sensing & 105.90 / 140.29 & 10.49 / 13.86 & 1.05 / 1.39\\
    Madelon &0.11 / 0.23 &0.021 / 0.028&0.0021 / 0.0028\\
    Gisette &1.08 / 1.74&0.21 / 0.28&0.021 / 0.028\\
    \hline
    \end{tabular}
    \begin{center}
        Zigzagging in continuous flow
    \end{center}
    \end{minipage}
    \\
     \\
    \begin{minipage}{.8\textwidth}
    \begin{tabular}[t]{c|c|c|c}
    \hline
    Test set& FW & FW-MID & FW-RK4\\
    \hline
    Sensing & 105.90 / 140.29 & 0.57 / 1.0018 & 0.015 / 0.033\\
    Madelon & 0.031 / 0.040 & 0.025 / 0.029 &0.025 / 0.029\\
    Gisette & 0.30 / 0.40 & 0.25 / 0.11 & 0.22 / 0.20\\
    \hline
    \end{tabular}
    \begin{center}
        Zigzagging in multistep methods
    \end{center}
    \end{minipage}
    \end{tabular}
    
    \end{minipage}
    
    \end{tabular}
    
    \caption{\textbf{Zig-zagging on real datasets.} 
    Average deviation of different discretizations of \FWflow. Top table uses different $\Delta$s and uses the vanilla Euler's discretization (FW). Bottom uses $\Delta = 1$ and different multistep methods.  
    The two numbers in each box correspond to window sizes 5 / 20.
    }
    \label{fig:continuous_no_zigzag}
\end{figure*}




\section{MULTISTEP FRANK WOLFE PROOFS}
\label{app:sec:multistepfw}
\subsection{Feasibility}

\begin{proposition}
  \label{prop:RK-feas}
For a given $q$-stage RK method defined by $A$, $\beta$, and $\omega$, for each given $k\geq 1$, define
\[
 \bar \gamma_i^{(k)} = \frac{c}{c+k+\omega_i}, \qquad 
 \Gamma^{(k)} = \diag({\bar \gamma^{(k)}}_i),
 \]
 \[
 \mathbf P^{(k)} = \Gamma^{(k)} (I+A^T\Gamma^{(k)})^{-1}, \qquad \mathbf z^{(k)} = q \mathbf P^{(k)}\beta.
\]
Then if $0\leq \mathbf z^{(k)}\leq 1$ for all $k\geq 1$, then 
\[
\bx_{0}\in \mD\Rightarrow \bx_{k}\in \mD, \quad \forall k\geq 1.
\]
  \end{proposition}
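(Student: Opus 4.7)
The plan is to do induction on $k$, reducing each single step $\bx_k \mapsto \bx_{k+1}$ to an explicit convex combination of $\bx_k$ and the stage-LMO atoms $s_i := \lmo_\mD(Y_i)$, where $Y_i = \bx_k + \sum_j A_{ij}\xi_j$ is the $i$-th stage evaluation point. Since each $s_i\in \mD$ and $\bx_k\in\mD$ by the inductive hypothesis, the only thing that needs to be verified is that the coefficients form a valid convex combination, and this is exactly what the condition $0\leq \mathbf z^{(k)}\leq 1$ controls.

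First, I would substitute $\dot x(t) = \gamma(t)(s(t)-x(t))$ into \eqref{eq:general_discrete} (with $\Delta=1$), obtaining at each stage
\[
\xi_i = \bar\gamma_i^{(k)}(s_i - \bx_k) - \bar\gamma_i^{(k)}\sum_j A_{ij}\xi_j.
\]
Stacking the $\xi_i^T$ as rows of a $q\times n$ matrix $\Xi$ and the $s_i^T$ as rows of $S$, this becomes the linear system
\[
(I+\Gamma^{(k)} A)\,\Xi \;=\; \Gamma^{(k)}\bigl(S-\mathbf 1\bx_k^T\bigr).
\]
Because $A$ is strictly lower triangular, $I+\Gamma^{(k)} A$ is lower triangular with unit diagonal and hence invertible, so $\Xi = (I+\Gamma^{(k)} A)^{-1}\Gamma^{(k)}(S-\mathbf 1 \bx_k^T)$, and $\bx_{k+1}^T = \bx_k^T + \beta^T\Xi$.

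Next I would identify the coefficient vector. Using the transpose identity
\[
\beta^T(I+\Gamma^{(k)} A)^{-1}\Gamma^{(k)} \;=\; \bigl(\Gamma^{(k)}(I+A^T\Gamma^{(k)})^{-1}\beta\bigr)^T \;=\; (\mathbf P^{(k)}\beta)^T,
\]
the update rearranges into
\[
\bx_{k+1} \;=\; \Bigl(1-\mathbf 1^T \mathbf P^{(k)}\beta\Bigr)\,\bx_k \;+\; \sum_{i=1}^q (\mathbf P^{(k)}\beta)_i\, s_i.
\]
Writing $w_i := (\mathbf P^{(k)}\beta)_i = \mathbf z^{(k)}_i/q$, the assumption $0\leq \mathbf z^{(k)}\leq 1$ gives $w_i\in[0,1/q]$, hence $\sum_i w_i\leq 1$ and the residual coefficient $1-\sum_i w_i$ on $\bx_k$ is also nonnegative. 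Thus $\bx_{k+1}$ is a convex combination of points in $\mD$, closing the induction.

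The only subtle step is the transpose identity relating $\beta^T(I+\Gamma^{(k)} A)^{-1}\Gamma^{(k)}$ to $\mathbf P^{(k)}\beta$; everything else is bookkeeping. I would also briefly remark on the base case $k=0$ (trivial from the hypothesis $\bx_0\in\mD$) and on the fact that invertibility of $I+\Gamma^{(k)} A$ does not rely on the hypothesis at all, so the coefficient formula is always well-defined; the hypothesis is used only to ensure that the resulting coefficients are feasible convex weights.
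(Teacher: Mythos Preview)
Your proposal is correct and follows essentially the same route as the paper: both derive the closed-form $\bx_{k+1}=(1-\mathbf 1^T\mathbf P^{(k)}\beta)\bx_k+\sum_i(\mathbf P^{(k)}\beta)_i\,\bar\bs_i$ by solving the linear system for the stages and then check that $0\le \mathbf z^{(k)}\le 1$ makes the weights a valid convex combination. The only cosmetic difference is that the paper repackages this as an average $\tfrac{1}{q}\sum_i[(1-\bz_i^{(k)})\bx_k+\bz_i^{(k)}\bar\bs_i]$ of $q$ pairwise convex combinations, whereas you write it directly as a single $(q{+}1)$-point convex combination.
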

  
  Proof of Prop. \ref{prop:RK-feas}.
  \begin{proof}
  For a given $k$, construct additionally
  \[
\mathbf Z = \begin{bmatrix} \xi_1 & \xi_2 & \cdots & \xi_q \end{bmatrix},
\]
\[
\bar{\mathbf X} = \begin{bmatrix} \bar \bx_1 & \bar \bx_2 & \cdots & \bar \bx_q \end{bmatrix},\quad
\bar{\mathbf S} = \begin{bmatrix} \bar \bs_1 & \bar \bs_2 & \cdots & \bar \bs_q \end{bmatrix}.
\]
where
      \begin{eqnarray*}
 {\bar \bx}_i &=& \bx_{k}+  \sum_{j=1}^q A_{ij} \xi_j, \\
 {\bar \bs}_i &=& \lmo({\bar \bx}_i).
\end{eqnarray*}
  Then we can rewrite \eqref{eq:general_discrete} as
  \begin{eqnarray*}
  \mathbf Z &=& (\bar {\mathbf S} - \bar{\mathbf X})\Gamma = (\bar {\mathbf S} - \bx_{k}\mathbf 1^T - \mathbf ZA^T)\Gamma \\
  &=& (\bar {\mathbf S} - \bx_{k}\mathbf 1^T )\mathbf P
  \end{eqnarray*}
  for shorthand  $\mathbf P = \mathbf P^{(k)}$ and $\Gamma=\Gamma^{(k)}$. 
 Then
 \begin{eqnarray*}
\bx_{k+1} 
&=& \bx_{k}(1-\mathbf 1^T \mathbf P\beta) + \bar {\mathbf S} \mathbf P\beta\\
&=&\frac{1}{q}\sum_{i=1}^q  \underbrace{(1-\bz^{(k)}_i)\bx_{k} + \bz^{(k)}_i\bar \bs_i }_{\hat\xi_i}
  \end{eqnarray*}
  where $\bz^{(k)}_i$ is the $i$th element of $\bz^{(k)}$, and $\beta = (\beta_1,...,\beta_q)$. 
Then if $0\leq \bz^{(k)}_i\leq 1$, then $\hat \xi_i$ is a convex combination of $\bx_{k}$ and $\bar \bs_i$, and $\hat \xi_i\in \mD$ if $\bx_{k}\in \mD$. Moreover, $\bx_{k+1}$ is an average of $\hat\xi_i$, and thus $\bx_{k+1}\in \mD$. Thus we have recursively shown that $\bx_{k}\in \mD$ for all $k$.
  \end{proof}

\subsection{Positive Runge-Kutta convergence result}
\label{app:sec:rkpositiveresults}

\begin{lemma} 
After one step, the generalized Runge-Kutta method satisfies
\[
h(\bx_{k+1})-h(\bx_{k}) \leq
-\gamma_{k+1} h(\bx_{k}) + D_4(\gamma_{k+1})^2\]

where $h(\bx) = f(\bx) - f(\bx^*)$ and


\[
D_4 =\frac{LD_2^2+2LD_2D_3+2D_3}{2},\quad D_2 = c_1 D, \quad D_3 = c_2c_1 D, \quad c_1 = qp_{\max}, \quad c_2 = q \max_{ij} |A_{ij}|, \quad D = \diam(\mD).
\]

\label{lem:rungekutta-positive-onestep}
\end{lemma}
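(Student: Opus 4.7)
The plan is to establish a one-step descent inequality that mimics the classical vanilla \FW~analysis but accounts for the fact that the RK update is a convex combination of intermediate LMOs rather than a single LMO step. I will combine $L$-smoothness of $f$ with two ingredients: the convex-combination representation of $\bx_{k+1}-\bx_{k}$ from the feasibility proposition, and an LMO-discrepancy bound that compares $\bs_k = \lmo_\mD(\bx_k)$ with the intermediate LMO values $\bar{\bs}_i = \lmo_\mD(\bar{\bx}_i)$.

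First, I would invoke $L$-smoothness to write
\[
h(\bx_{k+1}) - h(\bx_{k}) \leq \langle \nabla f(\bx_k), \bx_{k+1}-\bx_{k}\rangle + \tfrac{L}{2}\|\bx_{k+1}-\bx_{k}\|^2.
\]
From the feasibility proof, $\bx_{k+1}-\bx_{k} = \tfrac{1}{q}\sum_i \bz_i^{(k)}(\bar{\bs}_i - \bx_k)$, and the assumption $0\leq \bz_i^{(k)}\leq c_1\gamma_{k+1}$ together with $\|\bar{\bs}_i - \bx_k\|\leq D$ yields $\|\bx_{k+1}-\bx_{k}\|\leq D_2\gamma_{k+1}$, giving the $\tfrac{L}{2}D_2^2\gamma_{k+1}^2$ contribution. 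For the linear term, I would split $\bar{\bs}_i - \bx_k = (\bs_k - \bx_k) + (\bar{\bs}_i - \bs_k)$. The first piece gives the usual convexity-gap bound $\langle \nabla f(\bx_k), \bs_k - \bx_k\rangle \leq -h(\bx_k)$, which, combined with the normalization identity $\tfrac{1}{q}\sum_i \bz_i^{(k)} = \gamma_{k+1} + O(\gamma_{k+1}^2)$ (a direct computation from $\mathbf P^{(k)} = \Gamma^{(k)}(I+A^T\Gamma^{(k)})^{-1}$ and $\sum_i \beta_i=1$), produces the $-\gamma_{k+1}h(\bx_k)$ term.

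The main obstacle is the LMO-discrepancy piece, since $\lmo_\mD$ is discontinuous. The key maneuver is to chain the two LMO optimality inequalities: $\langle \nabla f(\bar{\bx}_i), \bar{\bs}_i\rangle \leq \langle \nabla f(\bar{\bx}_i), \bs_k\rangle$ and the symmetric one at $\bx_k$, which together imply
\[
\langle \nabla f(\bx_k), \bar{\bs}_i - \bs_k\rangle \leq \langle \nabla f(\bx_k) - \nabla f(\bar{\bx}_i), \bar{\bs}_i - \bs_k\rangle \leq 2LD\,\|\bar{\bx}_i - \bx_k\|
\]
by $L$-smoothness of $\nabla f$ and $\|\bar{\bs}_i - \bs_k\|\leq 2D$. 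It then remains to bound $\|\bar{\bx}_i - \bx_k\| = \|\sum_j A_{ij}\xi_j\|$ by $c_2\max_j\|\xi_j\|$; using the matrix identity $\mathbf Z = (\bar{\mathbf S} - \bx_k\mathbf 1^T)\mathbf P^{(k)}$, each $\xi_j$ is a weighted combination of the differences $\bar{\bs}_i - \bx_k$ with $\|\xi_j\|\leq c_1 D\gamma_{k+1}/q$, yielding $\|\bar{\bx}_i - \bx_k\|\leq D_3\gamma_{k+1}/q$ (up to the precise normalization convention for $p_{\max}$).

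Assembling the pieces, the linear term contributes $-\gamma_{k+1}h(\bx_k) + 2L D_2 D_3\gamma_{k+1}^2$ from the LMO-error estimate, plus a residual $2D_3\gamma_{k+1}^2$ arising from the $O(\gamma_{k+1}^2)$ correction in the normalization $\tfrac{1}{q}\sum_i\bz_i^{(k)}$ (which picks up a factor proportional to $\|A^T\Gamma^{(k)}\|$, hence $D_3$); combined with the quadratic term $\tfrac{L}{2}D_2^2\gamma_{k+1}^2$, this yields exactly
\[
h(\bx_{k+1})-h(\bx_{k}) \leq -\gamma_{k+1}h(\bx_k) + \frac{LD_2^2 + 2LD_2 D_3 + 2D_3}{2}\,\gamma_{k+1}^2,
\]
as required. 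The delicate point I would expect to recheck carefully is the bookkeeping connecting $\|P^{(k)}_{\cdot j}\|_1$, $\max_{ij}|A_{ij}|$, and the definition of $p_{\max}$, to be sure all three constants $D_2$, $D_3$, and the $2D_3$ residual match the stated expression for $D_4$ without hidden $q$-factors.
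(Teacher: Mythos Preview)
Your overall strategy is sound and yields a valid one-step inequality of the form $h(\bx_{k+1})-h(\bx_k)\le -\gamma_{k+1}h(\bx_k)+C\gamma_{k+1}^2$, which is all that is needed downstream. However, it is \emph{not} the route the paper takes, and your bookkeeping claim that the constant comes out exactly as $D_4$ is not justified.

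\textbf{Comparison.} The paper does not use the convex-combination representation $\bx_{k+1}-\bx_k=\tfrac{1}{q}\sum_i \bz_i^{(k)}(\bar\bs_i-\bx_k)$, nor does it introduce $\bs_k=\lmo_\mD(\bx_k)$ at all. Instead it writes $\bx_{k+1}-\bx_k=\sum_i\beta_i\tilde\gamma_i(\bar\bs_i-\bar\bx_i)$ and splits the inner product as $(\nabla f(\bx_k)-\nabla f(\bar\bx_i))^T(\bar\bs_i-\bar\bx_i)+\nabla f(\bar\bx_i)^T(\bar\bs_i-\bar\bx_i)$. The second piece is exactly $-\gap(\bar\bx_i)\le -h(\bar\bx_i)$ at the \emph{intermediate} point; the paper then invokes an additional $L_2$-Lipschitz assumption on $f$ to pass from $h(\bar\bx_i)$ back to $h(\bx_k)$, and \emph{that} Lipschitz step is precisely what produces the $2D_3$ summand in $D_4$. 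Your approach instead uses the gap at $\bx_k$ directly (no $L_2$ needed) and handles the mismatch through an LMO-discrepancy inequality; this is arguably cleaner but is a genuinely different decomposition.

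\textbf{Where your constant argument slips.} Two places. First, your LMO-discrepancy bound gives at most $L D\,\|\bar\bx_i-\bx_k\|$ (since $\|\bar\bs_i-\bs_k\|\le D=\diam(\mD)$, not $2D$), and after weighting by $\tfrac{1}{q}\sum_i\bz_i\approx\gamma_{k+1}$ this contributes a term of order $L D D_3\gamma_{k+1}^2$, not $L D_2 D_3\gamma_{k+1}^2$. Second, and more seriously, the ``$2D_3\gamma_{k+1}^2$ residual from the normalization'' is not correct as stated: the error $\bigl(\gamma_{k+1}-\tfrac{1}{q}\sum_i\bz_i^{(k)}\bigr)$ multiplies $\gap(\bx_k)$, which is bounded by $\|\nabla f(\bx_k)\|\cdot D$, not by anything proportional to $D_3$; the coefficient involving $\|A^T\Gamma^{(k)}\|$ sits inside the $O(\gamma_{k+1}^2)$ factor, not as a multiplicative $D_3$. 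So your route produces a \emph{different} (still finite) constant $D_4'$, and the claim that it matches the stated $D_4$ exactly is unsupported. This does not affect Proposition~\ref{prop:main-rungekutta-positive}, but if the goal is the lemma as written, you would need the paper's decomposition at $\bar\bx_i$.
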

\begin{proof}
For ease of notation, we write $\bx = \bx_{k}$ and $\bx^+ = \bx_{k+1}$. We will use $\gamma=\gamma_{k} = \tfrac{c}{c+k}$, and $\bar \gamma_i = \tfrac{c}{c+k+\omega_i}$.
Now consider the generalized RK method
\begin{eqnarray*}
\bar \bx_i &=& \bx + \sum_{j=1}^q A_{ij} \xi_j\\
\xi_i &=& \underbrace{\frac{c}{c+k+\omega_i}}_{\tilde \gamma_i} ( \bs_i - \bar {\bx}_i )\\
\bx^+&=&  \bx + \sum_{i=1}^q \beta_i \xi_i\\
\end{eqnarray*}
where $\bs_i = \lmo(\bar \bx_i)$.


Define $D = \diam(\mD)$. 
We use the notation from  section
\ref{sec:rkmethod}. 
Denote the 2,$\infty$-norm as
\[
\|A\|_{2,\infty} = \max_j \|a_j\|_2
\]
where $a_j$ is the $j$th column of $A$. Note that all the element-wise elements in 
\[
\mathbf P^{(k)} = \Gamma^{(k)}(I+A^T\Gamma^{(k)})^{-1}
\]
is a decaying function of $k$, and thus defining $p_{\max} = \|\mathbf P^{(1)}\|_{2,\infty}$
we see that
\[
\|\bar {\mathbf Z}\|_{2,\infty} = \|(\bar {\mathbf S} - \bx^{(k)}\mathbf 1)\mathbf P^{(k)}\|_{2,\infty} \leq qp_{\max} D.
\]

Therefore, since $\bar {\mathbf Z} = (\bar {\mathbf S}-\bar {\mathbf X})\Gamma$, and all the diagonal elements of $\Gamma$ are at most 1, 
\[
\|\bs_i-\bar \bx_i\|_2  \leq qp_{\max} D =: D_2
\]
and
\[
\|\bx-\bar \bx_i\|_2 = \|\sum_{j=1}^q A_{ij} \gamma_j (\bs_j-\bar \bx_j)\|_2 \leq q \max_{ij} |A_{ij}| \gamma D_2 =: D_3 \gamma.
\]

Then 
\begin{eqnarray*}
f(\bx^+)-f(\bx) &\leq&  \nabla f(\bx)^T(\bx^+-\bx) + \frac{L}{2}\|\bx^+-\bx\|_2^2\\
&=&  \sum_i \beta_i \tilde \gamma_i \nabla f(\bx)^T(\bs_i-\bar \bx_i) + \frac{L}{2}\underbrace{\|\sum_i \beta_i \tilde \gamma_i (\bs_i-\bar \bx_i)\|_2^2}_{\leq \gamma^2 D_2^2}\\
&=&  \sum_i \beta_i \tilde \gamma_i (\nabla f(\bx)-\nabla f(\bar \bx_i))^T(\bs_i-\bar \bx_i) +
 \sum_i \beta_i \tilde \gamma_i \underbrace{\nabla f(\bar \bx_i)^T(\bs_i-\bar \bx_i)}_{-\gap(\bar \bx_i)} +
 \frac{L\gamma^2D_2^2}{2}\\
 &\leq& \sum_i \beta_i\underbrace{ \tilde \gamma_i}_{\leq \gamma} \underbrace{\|\nabla f(\bx)-\nabla f(\bar \bx_i)\|_2}_{L\|\bx-\bar \bx_i\|_2=L\gamma D_3}\underbrace{\|\bs_i-\bar \bx_i\|_2}_{\leq D_2} - \sum_i \beta_i\tilde\gamma_i \gap(\bar \bx_i) + \frac{L\gamma^2 D_2^2}{2}\\
 &\leq &  -\sum_i \beta_i\tilde\gamma_i \gap(\bar \bx_i) + \frac{L\gamma^2 D_2^2}{2} + \frac{2L\gamma^2 D_2D_3}{2}\\ 
 &\leq& -\gamma^+ \sum_i\beta_i h(\bar \bx_i)  + \frac{L\gamma^2D_2(D_2+2D_3)}{2} 
\end{eqnarray*}
where $\gamma=\gamma_k$, and $\gamma^+=\gamma_{k+1}$. Now assume $f$ is also $L_2$-continuous, e.g. $|f(\bx_1)-f(\bx_2)|\leq L_2\|\bx_1-\bx_2\|_2$. Then, taking  $h(\bx) = f(\bx) -f(\bx^*)$,

\begin{eqnarray*}
h(\bx^+)-h(\bx) 
 &\leq& -\gamma^+ \sum_i\beta_i (h(\bar \bx_i)-h(\bx)) -\gamma^+ \underbrace{\sum_i\beta_i}_{=1} h(\bx) + \frac{L\gamma^2D_2(D_2+2D_3)}{2}\\
 &\leq & 
 \gamma \sum_i\beta_iL_2 \underbrace{\|\bar \bx_i-\bx\|_2}_{\leq \gamma D_3}-\gamma^+ h(\bx) + \frac{L\gamma^2D_2(D_2+2D_3)}{2}\\
 &\leq & -\gamma^+ h(\bx) + \frac{\gamma^2(LD_2^2+2LD_2D_3+2D_3)}{2}\\
 &\leq& -\gamma^+ h(\bx) + D_4(\gamma^+)^2
\end{eqnarray*}
where 
$D_4 =\frac{LD_2^2+2LD_2D_3+2D_3}{2}$and we use $2 \geq (\gamma/\gamma^+)^2$ for all $k \geq 1$.

\end{proof}

Proof of Prop. \ref{prop:main-rungekutta-positive}
\begin{proof}
After establishing Lemma \ref{lem:rungekutta-positive-onestep}, the rest of the proof is a recursive argument, almost identical to that in  \cite{jaggi2013revisiting}. 

At $k = 0$, we define $h_0 =\max\{ h(\bx^{(0)}), \frac{ D_4c^2}{c-1}\}$, 
and it is clear that $h(\bx^{(0)}) \leq  h_0$.

Now suppose that for some $k$, $h(\bx^{(k)}) \leq \frac{h_0}{k+1}$. Then
\begin{eqnarray*}
h(x_{k+1}) &\leq &  h(\bx_k) - \gamma_{k+1}h(\bx^{(k)}) + {D_4} \gamma_{k+1}^2\\
&\leq & \frac{h_0}{k+1}\cdot \frac{k+1}{c+k+1} + D_4 \frac{c^2}{(c+k+1)^2}\\
&=& \frac{h_0}{c+k+1} + D_4 \frac{c^2}{(c+k+1)^2}\\
&=& \left( h_0 + \frac{D_4c^2}{c+k+1}\right) \left(\frac{k+2}{c+k+1}\right) \frac{1}{k+2}
\\
&\leq& h_0\left( 1+\frac{c-1}{c+k+1}\right) \left(\frac{k+2}{c+k+1}\right) \frac{1}{k+2}\\
\\
&\leq& h_0\underbrace{\left( \frac{2c+ k }{c+k+1}\right) \left(\frac{k+2}{c+k+1}\right)}_{\leq 1} \frac{1}{k+2}.
\end{eqnarray*}
\end{proof}

\subsection{Negative Runge-Kutta convergence result}
\label{app:sec:negativeresults}

This section gives the proof for Proposition \ref{prop:main-rungekutta-negative}.

\begin{lemma}[$O(1/k)$ rate]\label{lem:o1krate}
Start with $\bx_{0} = 1$. Then consider the sequence defined by
\[
\bx_{k+1}= |\bx_{k} - \frac{c_k}{k}|
\]
where, no matter how large $k$ is, there exist some constant where  $C_1 < \max_{k'>k} c_{k'} $.
(That is, although $c_k$ can be anything, the smallest upper bound of $c_k$ does not decay.) Then
\[
\sup_{k'\geq k} |\bx_{k'}| = \Omega(1/k).
\]
That is, the smallest upper bound of $|\bx_{k}|$ at least of order $1/k$.
\end{lemma}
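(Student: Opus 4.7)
My approach is a two-case dichotomy at any index where $c_k$ is large. First, I would observe that $\bx_k \geq 0$ for every $k$, since $\bx_0 = 1$ and each later iterate is produced by an absolute value. This lets me rewrite the recursion piecewise: $\bx_{k+1} = \bx_k - c_k/k$ when $\bx_k \geq c_k/k$, and $\bx_{k+1} = c_k/k - \bx_k$ otherwise.

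The core step is the following dichotomy. Let $k^*$ be any index with $c_{k^*} > C_1$. If $\bx_{k^*} \geq C_1/(2k^*)$, the lower bound is already achieved. Otherwise $\bx_{k^*} < C_1/(2k^*) < c_{k^*}/(2k^*)$, so the second branch of the recursion applies and
\[
\bx_{k^*+1} \;=\; \frac{c_{k^*}}{k^*} - \bx_{k^*} \;>\; \frac{c_{k^*}}{2k^*} \;>\; \frac{C_1}{2k^*}.
\]
Thus $\max(\bx_{k^*}, \bx_{k^*+1}) > C_1/(2k^*)$ at every index where $c_{k^*}$ exceeds $C_1$. Notice that in the ``small'' branch the absolute value is essential: it flips a near-zero iterate back up to order $c_{k^*}/k^*$, which is precisely the mechanism blocking $|\bx_k|$ from decaying faster than $1/k$.

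Given any $k$, the hypothesis $C_1 < \max_{k' > k} c_{k'}$ supplies some index $k^* > k$ obeying the dichotomy, and hence $\sup_{k' \geq k}|\bx_{k'}| \geq C_1/(2k^*)$. The main obstacle I anticipate is passing from $1/k^*$ to $1/k$, since the hypothesis as stated does not by itself pin down how far past $k$ one must look to find a large $c_{k^*}$. In the multistep Runge--Kutta application this is not a problem, because the effective weights $c_k$ are bounded combinations of the RK coefficients and stay uniformly close to the constant $c$ appearing in $\gamma_k = c/(c+k)$, so one can take $k^*$ within a bounded multiplicative factor of $k$ and the final bound becomes $\Omega(1/k)$ with constant of order $C_1$. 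In the fully general reading of the hypothesis I would complement the dichotomy with a \emph{persistence} observation based on $|\bx_{k+1} - \bx_k| \leq c_k/k$: once the dichotomy installs a value of size $C_1/(2k^*)$ into the sequence, subsequent drift is controlled by a cumulative sum $\sum c_j/j$, which together with the unbounded recurrence of indices with $c_j > C_1$ keeps $\sup_{k' \geq k}|\bx_{k'}|$ of order $1/k$.
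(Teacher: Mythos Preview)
Your core argument is the paper's proof: both rest on the observation that whenever $c_{k^*}>C_1$ and $\bx_{k^*}<C_1/(2k^*)$, the recursion lands in the second branch and the absolute value flips $\bx_{k^*+1}$ back above $C_1/(2k^*)$. The paper packages this as a contradiction (assume $|\bx_k|<C_1/(2k)$ for all $k\geq K$, locate $k'\geq K$ with $c_{k'}>C_1$, and contradict at $k'+1$); you package it as a direct dichotomy. The mechanism is identical.

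You are in fact more careful than the paper on one point. You correctly flag that the argument only delivers a value $\geq C_1/(2k^*)$ at some $k^*>k$ supplied by the hypothesis, and that upgrading this to $\Omega(1/k)$ for the \emph{given} $k$ needs control on how far ahead $k^*$ lies. The paper's proof has the same gap: negating ``$|\bx_k|<C_1/(2k)$ for all $k\geq K$'' yields only $\limsup_k k|\bx_k|\geq C_1/2$, not the literal $\sup_{k'\geq k}|\bx_{k'}|\geq C/k$. Your proposed ``persistence'' patch via $\sum c_j/j$ does not actually close this (that sum diverges when the $c_j$ are merely bounded), but the paper makes no attempt to close it either. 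In the downstream Runge--Kutta application the issue is moot, exactly as you observe: the later lemmas show the effective step coefficient is eventually bounded below by a fixed $\epsilon>0$, so every large $k$ already serves as its own $k^*$.
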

\begin{proof}
We will show that the smallest upper bound of $|\bx_{k}|$ is larger than $C_1/(2k)$.

Proof by contradiction. 
Suppose that at some point $K$, for all $k \geq K$,  $|\bx_{k}| < C_1/(2k)$. Then from that point forward, 
\[
\sign(\bx_{k}-\frac{c_k}{k}) = -\sign(\bx_{k})
\]
and there exists some $k' > k$ where $c_{k'} > C_1$. Therefore, at that point,
\[
|\bx_{k'+1}| = \frac{c_{k'}}{k'}-|\bx_{k'}| 
\geq \frac{C_1}{2k'}>\frac{C_1}{2(k'+1)}.
\]
This immediately establishes a contradiction.
\end{proof}

Now define the operator 
\[
T(\bx_{k}) = \bx_{k+1}-\bx_{k}
\]
and note that 
\[
|\bx_{k+1}| = |\bx_{k}+T(\bx_{k})| = | |\bx_k|+\sign(\bx_{k})T(\bx_{k})|.
\]
Thus, if we can show that there exist some $\epsilon$, agnostic to $k$ (but possibly related to Runge Kutta design parameters), and
\begin{equation}
\exists k'\geq k, \quad -\sign(\bx_{k'})T(\bx_{k'}) > \frac{\epsilon}{k'},\quad \forall k,
\label{eq:lemma-helper-1}
\end{equation}
 then based on the previous lemma, this shows $\sup_{k'>k}|\bx_{k'}| = \Omega (1/k)$ as the smallest possible upper bound.

\begin{lemma}
Assuming that $0 <q\mb P^{(k)} \beta < 1$ then there exists a finite point $\tilde k$ where for all $k > \tilde k$, 
\[
|\bx_{k}| \leq \frac{C_2}{k}
\]
for some $C_2 \geq 0$.
\end{lemma}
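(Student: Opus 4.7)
The plan is to prove $|\bx_k|\le C_2/k$ by induction on $k$ once $k$ is past some threshold $\tilde k$, using the compact matrix form of the RK update derived in the proof of Proposition~\ref{prop:RK-feas} together with the special structure of the toy problem, where the LMO is always $-\sign(\bar\bx_i)$. First I would recall the update
\[
\bx_{k+1} \;=\; (1-\bar z_k)\bx_k \;+\; \tfrac{1}{q}\sum_{i=1}^q z_i^{(k)}\bar\bs_i,\qquad \bar z_k := \tfrac{1}{q}\mathbf 1^\top z^{(k)},
\]
and record two asymptotic facts obtained by expanding $\mathbf P^{(k)}=\Gamma^{(k)}(I+A^\top\Gamma^{(k)})^{-1}$ with $\Gamma^{(k)}=\Theta(1/k)\cdot I$: (i) $\bar z_k = c/k + O(1/k^2)$; and (ii) $|\bar\bx_i - \bx_k|\le M\bar z_k$ for some constant $M=M(A,q)$ (via $|\xi_j|\le 2\tilde\gamma_j$).

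For the inductive step, assuming $|\bx_k|\le C_2/k$, I would split on whether the sign of $\bar\bs_i$ is forced. In the \emph{dominant-iterate case} $|\bx_k|\ge 2M\bar z_k$, the perturbation $|\bar\bx_i-\bx_k|\le M\bar z_k<|\bx_k|/2$ forces $\sign(\bar\bx_i)=\sign(\bx_k)$ for every stage, so $\tfrac{1}{q}\sum_i z_i^{(k)}\bar\bs_i=-\bar z_k\sign(\bx_k)$ and the update collapses to the strict contraction $|\bx_{k+1}|=(1-\bar z_k)|\bx_k|-\bar z_k$. Plugging in $|\bx_k|\le C_2/k$ and $\bar z_k = c/k(1+o(1))$, the inductive bound $|\bx_{k+1}|\le C_2/(k+1)$ reduces to an inequality of the form $C_2(1-c)\le ck$, which holds for all $k$ large. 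In the \emph{small-iterate case} $|\bx_k|<2M\bar z_k$ the sign structure is lost, but the naive bound $|\bx_{k+1}|\le (1-\bar z_k)|\bx_k|+\bar z_k\le (2M+1)\bar z_k$ gives $|\bx_{k+1}|\le C_2/(k+1)$ as long as $C_2\ge (2M+1)c$.

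The base case is handled by feasibility: $|\bx_{\tilde k}|\le 1$, so picking $C_2=\max\{\tilde k,\,(2M+1)c\}$ closes the induction, with $\tilde k$ chosen large enough that both the asymptotic expansion of $\bar z_k$ and the Case~A algebra go through.

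The main obstacle is the bookkeeping required to make a single constant $C_2$ suffice in both cases. Case~A is tight and sensitive to the $O(1/k^2)$ corrections in $\bar z_k$, whereas Case~B is loose but only triggered when $|\bx_k|$ is already at the $O(1/k)$ scale. A subtle sub-issue inside Case~A is the possibility that $(1-\bar z_k)|\bx_k|<\bar z_k$, causing $\bx_{k+1}$ to flip sign during a contraction step; this sub-case is easily absorbed into the Case~B bound since it forces $|\bx_{k+1}|\le\bar z_k$ directly. Once these constants are pinned down, the induction is routine, and combined with Lemma~\ref{lem:o1krate} yields the matching $\Theta(1/k)$ rate used to complete the proof of Proposition~\ref{prop:main-rungekutta-negative}.
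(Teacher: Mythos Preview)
Your proposal is correct and follows essentially the same two-case induction as the paper's proof: both arguments show that when all stage signs agree the update is a strict one-step contraction of $|\bx_k|$, and when some sign flips $|\bx_k|$ must already be $O(\bar z_k)=O(1/k)$. The only cosmetic difference is that you split on the magnitude threshold $|\bx_k|\gtrless 2M\bar z_k$ and then \emph{deduce} the sign pattern, whereas the paper splits directly on whether $\sign(\bar\bx_i)=\sign(\bx_k)$ for all $i$ and, in the mismatch case, \emph{deduces} the smallness of $|\bx_k|$ from $|\bx_k|<|(\bZ A^\top)_i|=O(1/k)$; the resulting arithmetic and choice of $C_2$ are equivalent.
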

\begin{proof}

We again use the block matrix notation
\[
\bZ^{(k)} = (\bar \bS-\bx_{k}\mb 1^T) \Gamma^{(k)}(I+A^T\Gamma^{(k)})^{-1}
\]
where $\Gamma^{(k)} = \diag(\tilde \gamma_i^{(k)})$ and each element $\tilde \gamma_i^{(k)} \leq \gamma^{(k)}$.

First, note that by construction, since 
\[
\|\bar \bS-\bx_{k}\mb 1^T\|_{2,\infty} \leq D_4, \quad \|(I+A^T\Gamma^{(k)})^{-1}\|_2 \leq  \|(I+A^T\Gamma^{(0)})^{-1}\|_2
\]
are bounded above by constants, then 
\[
\|\bZ^{(k)}\|_\infty \leq \frac{c}{c+k} C_1
\]
for $C_1 = D_4\|(I+A^T\Gamma^{(0)})^{-1}\|_2 $.

First find constants $C_3 $, $C_4$, and $\bar k$ such that
\begin{equation}
\frac{C_3}{k} \leq \mb 1^T \mathbf P^{(k)} \beta \leq \frac{C_4}{k}, \quad \forall k>\bar k,
\label{eq:boundx_helper}
\end{equation}
and such constants always exist, since
by assumption, there exists some $a_{\min} > 0$, $a_{\max}<1$ and some $k'$ where
\[
a_{\min} <q\mb P^{(k')} \beta < a_{\max} \Rightarrow \frac{a_{\min}}{q \gamma_{\max}} \leq (I+A^T\Gamma^{(k')})^{-1} \beta \leq \frac{a_{\max}}{q \gamma_{\min}}
\]
where 
\[
\gamma_{\min} = \min_i \frac{c}{c+k'+\omega^{(k')}_i}, \qquad \gamma_{\max} = \frac{c}{c+k'}.
\]
Additionally, for all $k > c+1$,
\[
\frac{c}{2k}\leq \frac{c}{c+k+1} \leq \Gamma^{(k)}_{ii} \leq \frac{c}{c+k} \leq \frac{c}{k}.
\]
Therefore taking
\[
C_3 = \frac{ca_{\min}}{2q \gamma_{\max} }, \qquad C_4 = \frac{c a_{\max}}{q\gamma_{\min}}, \qquad \bar k = \max\{k',c+1\}
\]
satisfies \eqref{eq:boundx_helper}.

Now define
\[
C_2 = \max\{|\bx_{1}|,4cq C_1 \|A\|_\infty, 4C_3, 4C_4\}.
\]
We will now inductively show that $|\bx_{k}|\leq \frac{C_2}{k}$. From the definition of $C_2$, we have the base case for $k = 1$:
\[
|\bx_{1}| \leq \frac{|\bx_{1}|}{1} \leq \frac{C_2}{k}.
\]
Now assume that $|\bx_{k}|\leq \frac{C_2}{k}$. Recall that
\[
\bx_{k+1} = \bx_{k}(1-\mb 1^T \mathbf P^{(k)} \beta) + \bar \bS\mathbf P^{(k)} \beta, \qquad \bar \bS = [\bar \bs_1,...,\bar\bs_q], \qquad \bs_i = -\sign(\bar \bx_i)
\]
and we denote the composite mixing term $\bar \gamma^{(k)} = \mb 1^T\mb P^{(k)} \beta$. 
We now look at two cases separately.
\begin{itemize}
    \item Suppose first that $\bar \bS = -\sign(\bx_{k}\mb 1^T)$, e.g. $\sign(\bar\bx_i) = \sign(\bx_{k})$ for all $i$. Then 
    \[
    \bar \bS \mb P^{(k)} \beta = -\sign(\bx_{k})\bar \gamma_k,
    \]
    and 
    \begin{eqnarray*}
    |\bx_{k+1}| &=&  |\bx_{k}(1-\bar\gamma^{(k)}) + \bar \bS\mathbf P^{(k)} \beta|\\
    &=&  |\bx_{k}(1-\bar\gamma^{(k)}) -\sign(\bx_{k})\bar\gamma^{(k)}|\\
    &=&  |\underbrace{\sign(\bx_{k})\bx_{k}}_{|\bx_{k}|}(1-\bar\gamma^{(k)}) -\underbrace{\sign(\bx_{k})\sign(\bx_{k})}_{=1}\bar\gamma^{(k)}|\\
    &=&  | |\bx_{k}|(1-\bar\gamma^{(k)}) -\bar\gamma^{(k)}|\\
    &\leq&  \max\{ |\bx_{k}|(1-\bar\gamma^{(k)}) -\bar\gamma^{(k)},
    \bar\gamma^{(k)} - |\bx_{k}|(1-\bar\gamma^{(k)}) 
    \}\\
    &\leq&  \max\Bigg\{ \underbrace{\frac{C_2}{k}(1-\frac{C_3}{k}) -\frac{C_3}{k}}_{(*)},
    \frac{C_4}{k} \Bigg\}\\
    \end{eqnarray*}
    and when $k \geq \frac{C_2}{C_3} \iff C_3 \geq \frac{C_2}{k}$,
    \[
    (*) \leq C_2\left(\frac{1}{k} -\frac{1}{k^2}\right) \leq \frac{C_2}{k+1}.
    \]
    Taking also $C_4 \leq \frac{C_2}{4}$,
    \begin{eqnarray*}
    |\bx_{k+1}| 
    \leq  \max\left\{ \frac{C_2}{k+1},
    \frac{C_2}{4k} \right\} \leq \frac{C_2}{k+1}\\
    \end{eqnarray*}
    for all $k \geq 1$.
    
    \item Now suppose that there is some $i$ where $\bar\bs_i = \sign(\bx_{k}\mb1^T)$. 
    Now since 
\[
\bar \bS = -\sign(\bx_{k}\mb 1^T + \bZ A^T)
\]
then this implies that 
$ |\bx_{k}| < (\bZ A^T)_i$.
But since 
\[
|(\bZ A^T)_i| \leq \|\bZ\|_\infty \|A\|_\infty q \leq \frac{c}{c+k}(C_1\|A\|_\infty q) \leq \frac{C_2}{4k}, 
\]
this implies that
\[
|\bx_{k+1}| \leq \frac{C_2}{4k}(1-\frac{C_3}{k}) + \frac{C_2}{4k} \leq \frac{C_2}{2k}\leq \frac{C_2}{k+1}, \quad \forall k > 1.
\]
\end{itemize}
Thus we have shown the induction step, which completes the proof.
\end{proof}

\begin{lemma}

There exists a finite point $\tilde k$ where for all $k > \tilde k$, 
\[
 \frac{c}{c+k}-\frac{C_4}{k^2} < |\xi_i| < \frac{c}{c+k}+\frac{C_4}{k^2}
\]
for some constant $C_4>0$.
\end{lemma}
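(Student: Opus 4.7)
The plan is to show that each stage direction $\xi_i$ is, to leading order, determined entirely by the LMO output, which is always $\pm 1$ in this toy problem, and that both the iterate $\bar\bx_i$ and the perturbation from $\omega_i$ contribute only $O(1/k^2)$ corrections.

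First, I would invoke the preceding lemma, which establishes $|\bx_k| \leq C_2/k$ for all $k$ large enough. Combining this with the block-matrix identity
\[
\bZ^{(k)} = (\bar\bS - \bx_k \mb 1^T)\Gamma^{(k)}(I+A^T\Gamma^{(k)})^{-1}
\]
derived in the feasibility proof, and observing that the columns of $\bar\bS$ are LMO outputs with $|\bar\bs_i|=1$, we get $\|\bar\bS - \bx_k\mb 1^T\|_{2,\infty} \leq 1 + C_2/k$, while $\|\Gamma^{(k)}\|_\infty \leq c/(c+k)$ and $\|(I+A^T\Gamma^{(k)})^{-1}\|_\infty$ is uniformly bounded (since $A$ is strictly lower triangular and $\Gamma^{(k)}$ is bounded). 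This yields $|\xi_j| = O(1/k)$ uniformly in $j$.

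Second, from $\bar\bx_i = \bx_k + \sum_j A_{ij}\xi_j$ and the triangle inequality,
\[
|\bar\bx_i| \leq |\bx_k| + q\|A\|_\infty \max_j |\xi_j| \leq \frac{C'}{k}
\]
for an explicit $C'$ depending on $C_2$, $q$, and $\|A\|_\infty$. In particular, there is a finite $\tilde k$ such that for all $k>\tilde k$ we have $|\bar\bx_i|<1$, so $\bar\bx_i\neq 0$ and the LMO on $[-1,1]$ gives $\bs_i = -\sign(\bar\bx_i)\in\{\pm 1\}$ unambiguously.

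Third, because $\bs_i = -\sign(\bar\bx_i)$ and $\bar\bx_i = \sign(\bar\bx_i)|\bar\bx_i|$, we obtain the exact identity
\[
\bs_i - \bar\bx_i = -\sign(\bar\bx_i)(1 + |\bar\bx_i|),
\]
and therefore $|\xi_i| = \tilde\gamma_i(1+|\bar\bx_i|)$, where $\tilde\gamma_i = c/(c+k+\omega_i)$. A straightforward expansion gives
\[
\left|\tilde\gamma_i - \frac{c}{c+k}\right| = \frac{c|\omega_i|}{(c+k)(c+k+\omega_i)} \leq \frac{c\,\omega_{\max}}{k^2},
\]
and $\tilde\gamma_i|\bar\bx_i| \leq (c/k)(C'/k) = cC'/k^2$. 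Summing these $O(1/k^2)$ corrections, there is a single constant $C_4$ (depending on $c$, $\omega_{\max}$, $C'$) such that $\bigl||\xi_i| - c/(c+k)\bigr| \leq C_4/k^2$ for all $k>\tilde k$, which is the desired bound.

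The main technical obstacle is Step 1, specifically bookkeeping the constants implicit in $\|\xi\|_\infty = O(1/k)$ carefully enough that Step 2's bound $|\bar\bx_i|\leq C'/k$ holds with an absolute constant $C'$ independent of $k$. Once that uniform control is in place, the computation of $|\xi_i|$ becomes an exact one-line identity rather than an inequality, which is what makes the two-sided $O(1/k^2)$ bound possible rather than just a one-sided estimate.
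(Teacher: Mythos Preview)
Your argument is correct and close in spirit to the paper's, but you reach the key estimate $|\xi_i| = \tilde\gamma_i + O(1/k^2)$ by a different and arguably more transparent route. The paper applies the Woodbury identity to $\bZ^{(k)} = (\bar\bS-\bx_k\mb 1^T)\Gamma(I+A^T\Gamma)^{-1}$ to isolate the leading term $\bar\bS\Gamma$ from a residual matrix $\mathbf B = \bx_k\mb 1^T\Gamma + (\bar\bS-\bx_k\mb 1^T)\Gamma A^T(I+\Gamma A^T)^{-1}\Gamma$, and then bounds $\|\mathbf B\|_\infty = O(1/k^2)$ using $|\bx_k|=O(1/k)$ together with the double appearance of $\Gamma$. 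You instead first bound $|\bar\bx_i| = O(1/k)$ from $|\bx_k|=O(1/k)$ and $|\xi_j|=O(1/k)$, and then invoke the exact scalar identity $|\xi_i| = \tilde\gamma_i(1+|\bar\bx_i|)$ directly. Your route avoids the matrix algebra and makes the two-sided nature of the bound immediate; the paper's decomposition is slightly more abstract but would generalize more readily beyond the scalar toy problem. One small slip: the clause ``$|\bar\bx_i|<1$, so $\bar\bx_i\neq 0$'' is a non sequitur; what you actually need (and what holds generically) is that $\bar\bx_i$ does not land exactly at zero, so that $\bs_i=-\sign(\bar\bx_i)\in\{\pm 1\}$ is unambiguous. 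Both proofs tacitly ignore this degenerate case.
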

\begin{proof}
Our goal is to show that 
\[
\gamma^{(k)} - \frac{C_4}{k^2} \leq \|\bZ\|_\infty \leq  \gamma^{(k)} + \frac{C_4}{k^2}
\]
for some $C_4\geq 0$, and for all $k \geq k'$ for some $k' \geq 0$.
Using the Woodbury matrix identity,
\[
\Gamma(I+A^T\Gamma)^{-1} = \Gamma\left(I - A^T(I+\Gamma A^T)^{-1} \Gamma\right)
\]
and thus
\[
\bZ^{(k)} = \bar \bS\Gamma -\underbrace{\left(\bx_{k}\mb 1^T \Gamma + (\bar \bS-\bx_{k}\mb 1^T)\Gamma A^T(I+\Gamma A^T)^{-1}\Gamma\right)}_{\mathbf B}.
\]
and thus
\[
   |\bar \bs_i \tilde \gamma_i| - \frac{C_3}{k^2} \leq |\xi_i^{(k)}| \leq |\bar \bs_i \tilde \gamma_i| + \frac{C_3}{k^2}
\]
where via triangle inequalities and norm decompositions,
\[
\frac{C_3}{k^2} = \max_i |\mathbf B_i| \leq \underbrace{|\bx_{k}|}_{O(1/k)} \gamma_k + D_4 \gamma_k^2 \|A\|_\infty (I+\Gamma^{(0)}A^T)^{-1} = O(1/k^2).
\]
Finally, since $\bar \bs_i\in \{-1,1\}$, then 
$|\bar \bs_i\tilde \gamma_i| = \tilde\gamma_i$, and in particular,
\[
\frac{c}{c+k+\omega_i}\leq \frac{c}{c+k}
\]
and
\[
\frac{c}{c+k+\omega_i}\geq \frac{c}{c+k+\omega_{\max}} = \frac{c}{c+k} - \frac{c}{c+k}\frac{\omega_{\max}}{c+k+\omega_{\max}} \geq \frac{c}{c+k}-\frac{c\omega_{\max}}{k^2}
\]
Therefore, taking $C_4 = c\omega_{\max}+C_3$ completes the proof.

\end{proof}

\begin{lemma}
There exists some large enough $\tilde k$ where for all $k \geq \tilde k$, it must be that
\begin{equation}
\exists k'\geq k, \quad -\sign(\bx_{k'})T(\bx_{k'}) > \frac{\epsilon}{k'}.
\label{eq:lemma-helper-1}
\end{equation}
\end{lemma}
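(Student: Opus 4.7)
The plan is a direct proof by contradiction, built on the sharp characterization of the multistep increment $T(\bx_k) = \sum_i \beta_i \xi_i$ afforded by the previous two lemmas. First I would note that since $|\bar\bx_i| = O(1/k)$ (because both $|\bx_k|$ and $|\xi_j|$ are $O(1/k)$), and since $\bs_i = -\sign(\bar\bx_i)$, one has
\[
\xi_i = \tilde\gamma_i\bigl(-\sign(\bar\bx_i) - \bar\bx_i\bigr) = -\gamma_k \sign(\bar\bx_i) + O(1/k^2),
\]
where $\gamma_k = c/(c+k)$ and $\tilde\gamma_i = \gamma_k + O(1/k^2)$. Summing with the weights $\beta_i$ (using $\sum_i \beta_i = 1$) yields
\[
T(\bx_k) = -\gamma_k \sum_{i=1}^q \beta_i \sign(\bar\bx_i) + O(1/k^2).
\]

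Second, I would lower-bound $|T(\bx_k)|$ using the non-cancellability hypothesis. For any sign vector $\sigma \in \{-1,+1\}^q$, setting $S_1 = \{i : \sigma_i = +1\}$ and $S_2 = \{i : \sigma_i = -1\}$ gives $\sum_i \beta_i \sigma_i = \sum_{i\in S_1}\beta_i - \sum_{i\in S_2}\beta_i$, which by hypothesis is nonzero for every nontrivial partition (and equals $\pm 1$ for trivial ones). Since there are only finitely many sign vectors, $\epsilon_0 := \min_\sigma \bigl|\sum_i \beta_i \sigma_i\bigr| > 0$ is attained, so uniformly in $k$,
\[
|T(\bx_k)| \geq \gamma_k \epsilon_0 - O(1/k^2) \geq \frac{c_1}{k}
\]
for some $c_1 > 0$ and every $k$ beyond some threshold.

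Third, set $\epsilon = c_1/2$ and argue by contradiction. If the lemma were false with this $\epsilon$, then there would exist an arbitrarily large $k$ such that every $k' \geq k$ satisfies $-\sign(\bx_{k'}) T(\bx_{k'}) \leq \epsilon/k' < c_1/k' \leq |T(\bx_{k'})|$. This forces $\sign(T(\bx_{k'})) = \sign(\bx_{k'})$, so each step pushes the iterate \emph{away} from the origin: $|\bx_{k'+1}| = |\bx_{k'}| + |T(\bx_{k'})| \geq |\bx_{k'}| + c_1/k'$. Iterating,
\[
|\bx_{k+N}| \geq c_1 \sum_{j=0}^{N-1} \frac{1}{k+j} \;\to\; \infty,
\]
which contradicts the feasibility bound $|\bx_{k+N}| \leq 1$ (also contradicting the $O(1/k)$ bound of the preceding lemma). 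Hence $\tilde k$ exists and the lemma holds.

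The main obstacle is the first step: establishing the expansion $T(\bx_k) = -\gamma_k \sum_i \beta_i \sign(\bar\bx_i) + O(1/k^2)$ with quantitative error constants that do not depend adversarially on $k$. This requires combining $|\bx_k| = O(1/k)$ from the earlier lemma with $|\xi_i| = \gamma_k + O(1/k^2)$ to track $|\bar\bx_i|$ uniformly across all $q$ stages, and then verifying that the remainder stays genuinely smaller than $c_1/k$. Once this expansion is in hand, the role of non-cancellability becomes transparent: it is exactly the condition that prevents the dominant $\gamma_k$ term from vanishing for some adversarial sign pattern the iterates might produce, and the contradiction step is then routine.
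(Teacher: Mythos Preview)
Your approach matches the paper's on the key step: both use the previous lemma's estimate $|\xi_i| = \frac{c}{c+k} + O(1/k^2)$ together with non-cancellability to obtain $|T(\bx_k)| = \bigl|\sum_i \beta_i \xi_i\bigr| \geq c_1/k$ for all sufficiently large $k$. The paper partitions the indices by $\sign(\xi_i)$ and bounds the sum directly; your expansion $\xi_i = -\gamma_k\,\sign(\bar\bx_i) + O(1/k^2)$ followed by applying non-cancellability to the sign vector $(\sign(\bar\bx_i))_i$ is an equivalent reformulation of the same idea.

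Where you go further is Step~3. The paper's proof stops after establishing the lower bound on $|T(\bx_k)|$ and simply declares ``Picking $\epsilon=\max\{C_2,c\}$ concludes the proof,'' leaving implicit the passage from $|T(\bx_k)|>\epsilon/k$ to the \emph{signed} statement $-\sign(\bx_{k'})T(\bx_{k'}) > \epsilon/k'$ for some $k'\geq k$. Your contradiction argument---if the signs always aligned then $|\bx_{k'+1}| \geq |\bx_{k'}| + c_1/k'$ would force $|\bx_k|\to\infty$, violating feasibility (and the earlier $|\bx_k| = O(1/k)$ bound)---fills exactly this gap, so your write-up is in fact more complete than the paper's on this point.
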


\begin{proof}
Define a partitioning $S_1\cup S_2 = \{1,...,q\}$, where
\[
S_1 = \{i : \xi_i > 0\}, \quad S_2 = \{j : \xi_j\leq 0\}.
\]
Defining $\bar \xi = \frac{c}{c+k}$,
\[
|\sum_{i=1}^q \beta_i \xi_i| = |\sum_{i\in S_1}\beta_i |\xi_i| -\sum_{j\in S_2}\beta_j|\xi_j|| \geq  \left(\bar \xi-\frac{C_4}{k^2}\right)\cdot\left|\sum_{i\in S_1}\beta_i-\sum_{j\in S_2}\beta_j\right|.
\]

By assumption, there does not exist a combination of $\beta_i$ where a specific linear combination could cancel them out; that is, suppose that there exists some constant $\bar \beta$, where for \emph{every} partition of sets $S_1$,$S_2$,
\[
0<\bar\beta :=\min_{S_1,S_2}  |\sum_{i\in S_1}\beta_i-\sum_{j\in S_2}\beta_j|.
\]
Then  
\[
|\sum_{i=1}^q \beta_i \xi_i|  \geq  \left(\frac{c}{c+k}-\frac{C_2}{k^2}\right)\bar\beta \geq \bar\beta\frac{\max\{C_2,c\}}{k}.
\]
Picking $\epsilon = \max\{C_2,c\}$ concludes the proof.
\end{proof}


\section{AVERAGED FRANK WOLFE PROOFS}
\label{app:sec:averagefw}

\subsection{Accumulation terms}
\label{app:sec:accumulation}
\begin{lemma}
For an averaging term $\bar s(t)$ satisfying
\[
\dot{\bar s}(t) =  \beta(t) (s(t) - \bar s(t)), \qquad \bar s(0) = s(0) = 0
\]
where $\beta(t) = \frac{c^p}{(c+t)^p}$, then
\[
\bar s(t) = 
\begin{cases}
\displaystyle e^{-\alpha(t)}\int_0^t \frac{c^p e^{\alpha(\tau)}}{(c+\tau)^p} s(\tau) d\tau, & p\neq 1\\
\displaystyle \frac{c}{(c + t)^c} \int_0^t  (c + \tau)^{c - 1} s(\tau) d\tau & p = 1
\end{cases}
\]
where $\alpha(t) = \frac{c^p(c+t)^{1-p}}{1-p}$. If $s(t) = 1$ for all $t$, then we have an accumulation term
\[
\bar s(t) = \begin{cases}
\displaystyle  1-\frac{e^{\alpha(0)}}{e^{\alpha(t)}},& p\neq 1\\
 \displaystyle 1-(\frac{c}{c+t})^c, & p = 1\\
 \end{cases}
 \]

\end{lemma}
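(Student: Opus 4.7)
The plan is to recognize the ODE as a scalar linear first-order equation in $\bar s(t)$ and solve it by the integrating-factor method, then specialize to $s\equiv 1$. Rewriting the ODE in standard form gives $\dot{\bar s}(t)+\beta(t)\bar s(t)=\beta(t)s(t)$, which suggests choosing an antiderivative $\alpha(t)$ of $\beta(t)$ so that $e^{\alpha(t)}$ is an integrating factor; then $\frac{d}{dt}\bigl[e^{\alpha(t)}\bar s(t)\bigr]=e^{\alpha(t)}\beta(t)s(t)$ and integrating from $0$ to $t$ with $\bar s(0)=0$ produces the claimed representation
\[
\bar s(t)=e^{-\alpha(t)}\int_0^t e^{\alpha(\tau)}\beta(\tau)s(\tau)\,d\tau.
\]

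Next I would compute $\alpha(t)=\int_0^t \beta(\tau)\,d\tau$ explicitly in the two regimes. For $p\neq 1$ integration gives $\alpha(t)=\tfrac{c^p(c+t)^{1-p}}{1-p}+C$ for an arbitrary additive constant; any choice yields the same integrating factor, and picking the constant so that $\alpha(t)=\tfrac{c^p(c+t)^{1-p}}{1-p}$ (as in the statement) matches the displayed formula. For $p=1$ direct integration produces $\alpha(t)=c\log\tfrac{c+t}{c}$, hence $e^{\alpha(t)}=(c+t)^c/c^c$ and in particular $\alpha(0)=0$; substituting into the general representation and simplifying the $c^c$ factors gives exactly $\bar s(t)=\tfrac{c}{(c+t)^c}\int_0^t(c+\tau)^{c-1}s(\tau)\,d\tau$, with no residual $e^{\alpha(0)}$ because the normalization makes that factor unity.

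For the accumulation formulas with $s(\tau)\equiv 1$, the integral $\int_0^t e^{\alpha(\tau)}\beta(\tau)\,d\tau$ is a pure antiderivative: since $\alpha'(\tau)=\beta(\tau)$, the integrand is $\tfrac{d}{d\tau}e^{\alpha(\tau)}$, so the integral equals $e^{\alpha(t)}-e^{\alpha(0)}$. Dividing by $e^{\alpha(t)}$ yields $\bar s(t)=1-e^{\alpha(0)-\alpha(t)}=1-e^{\alpha(0)}/e^{\alpha(t)}$ in the case $p\neq 1$. For $p=1$, $\alpha(0)=0$ so the same identity collapses to $\bar s(t)=1-(c/(c+t))^c$, matching the stated formula.

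Essentially no step is an obstacle; the only place to be careful is the bookkeeping of the additive constant in $\alpha$, since the statement fixes a specific representative of the antiderivative (for which $\alpha(0)\neq 0$ when $p\neq 1$) and this constant survives in the $e^{\alpha(0)}/e^{\alpha(t)}$ form of the accumulation term. The contrast with the $p=1$ case, where $\alpha(0)=0$ removes the extra factor, is just a consequence of this normalization choice.
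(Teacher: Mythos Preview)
Your proposal is correct. The paper proves the lemma by \emph{verification}: it differentiates the claimed closed forms and checks that they satisfy the ODE and initial condition, using the key identity $\alpha'(\tau)=\beta(\tau)$. You instead \emph{derive} the formulas via the integrating-factor method, which is the natural constructive counterpart and rests on the same identity; your handling of the additive constant in $\alpha$ and the $p=1$ normalization is exactly the bookkeeping needed to match the stated expressions.
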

\begin{proof}
This can be done through simple verification.
\begin{itemize}
    \item If $p\neq 1$,
\[
\alpha'(t) = \frac{c^p}{(c+t)^p} = \beta(t), 
\]
and via chain rule,
\[
\bar s'(t) = \underbrace{e^{-\alpha(t)}\frac{c^p\exp(\alpha(t))}{(c+t)^p}}_{=\beta(t)}s(t)-\alpha'(t) \underbrace{\exp(-\alpha(t))\int_0^t \frac{c^p\exp(\alpha(\tau))}{(c+\tau)^p} s(\tau) d\tau}_{\bar s(t)}.
\]
The accumulation term can be verified if 
\[
e^{-\alpha(t)}\int_0^t \frac{c^pe^{\alpha(\tau)}}{(c+\tau)^p} d\tau = 1-\frac{e^{\alpha(0)}}{e^{\alpha(t)}}
\]
which is true since 
\[
e^{-\alpha(t)}\int_0^t \frac{c^pe^{\alpha(\tau)}}{(c+\tau)^p} d\tau =  e^{-\alpha(t)}\int_0^t(\frac{d}{d\tau} e^{\alpha(\tau)})d\tau.
\]
\item If $p = 1$
\[
\bar s'(t) = \frac{c}{(c+t)}s(t) - \frac{c^2}{(c+t)^{c+1}}\int_0^t (c+\tau)^{c-1} s(\tau) d\tau = \frac{c}{(c+t)} (s(t)-\bar s(t)).
\]
For the accumulation term, 
\[
\frac{c}{(c+t)^c}\int_0^t (c+\tau)^{c-1}  d\tau = \frac{c}{(c+t)^c}\int_0^t (\frac{\partial}{\partial \tau} \frac{(c+\tau)^c}{c})  d\tau = 1-(\frac{c}{c+t})^c.
\]
\end{itemize}

\end{proof}
For convenience, we define
\[
\beta_{t,\tau} :=
\begin{cases}
\displaystyle   \frac{c^p e^{\alpha(\tau)-\alpha(t)}}{(c+\tau)^p}   , & p\neq 1\\
\displaystyle \frac{c(c+\tau)^{c-1}}{(c+t)^b}, & p = 1,
\end{cases}
\qquad
\bar\beta_t :=
\begin{cases}
\displaystyle  1-\frac{\exp(\alpha(0))}{\exp(\alpha(t))}, & p\neq 1\\
\displaystyle 1-(\frac{c}{c+t})^b & p = 1
\end{cases}
\]

\begin{lemma}
For the averaging sequence $\bar \bs_k$ defined recursively as 
\[
\bar \bs_{k+1} = \bar \bs_k + \beta_k (\bs_k - \bar \bs_k), \qquad \bar \bs_0 = 0.
\]
Then
\[
\bar \bs_k = \sum_{i=1}^k \beta_{k,i} \bs_i, \qquad
\beta_{k,i} =  \frac{c^p}{(c+i)^p}\prod_{j=0}^{k-i-1} \left(1-\frac{c^p}{(c+k-j)^p}\right) \overset{p=1}{=} \frac{c}{c+i} \prod_{j=0}^{c} \frac{i+j+1}{c+k-j}
\]
and moreover, $\sum_{i=1}^k \beta_{k,i} = 1$.
\end{lemma}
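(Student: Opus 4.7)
The statement has two parts: a closed-form expression for the weights $\beta_{k,i}$, and a normalization identity $\sum_i \beta_{k,i} = 1$. Both are essentially consequences of unrolling a linear first-order recurrence, so I would proceed by induction on $k$ followed by a telescoping argument.

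First I would rewrite the recursion $\bar\bs_{k+1} = \bar\bs_k + \beta_k(\bs_k - \bar\bs_k)$ in the convex-combination form $\bar\bs_{k+1} = (1-\beta_k)\bar\bs_k + \beta_k \bs_k$. Starting from $\bar\bs_0 = 0$, I would inductively assume $\bar\bs_k = \sum_i \beta_{k,i} \bs_i$ and plug into the recursion, matching coefficients of each $\bs_i$ to obtain the coefficient recurrence
\[
\beta_{k+1,k} = \beta_k, \qquad \beta_{k+1,i} = (1-\beta_k)\beta_{k,i} \text{ for } i<k.
\]
Unrolling this down to the base case gives $\beta_{k,i} = \beta_i \prod_{\ell=i+1}^{k}(1-\beta_\ell)$ (modulo indexing conventions). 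Substituting $\beta_\ell = c^p/(c+\ell)^p$ and reindexing the product via $\ell = k - j$ yields precisely the stated formula. The $p=1$ specialization is then a direct algebraic reduction: each $1-c/(c+k-j) = (k-j)/(c+k-j)$, and collecting the numerators telescopes into the product $\prod_{j=0}^{c} (i+j+1)/(c+k-j)$ after a brief reindexing of the shifted numerator terms.

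For the normalization claim $\sum_{i} \beta_{k,i} = 1$, the key tool is the telescoping identity
\[
\beta_i \prod_{j=i+1}^{k}(1-\beta_j) = \prod_{j=i+1}^{k}(1-\beta_j) - \prod_{j=i}^{k}(1-\beta_j),
\]
which follows from $1-(1-\beta_i) = \beta_i$. Summing in $i$ collapses the sum to $1 - \prod_{j}(1-\beta_j)$ over the full range, and the product vanishes because the base index carries $\beta = 1$ under the convention $\beta_k = (c/(c+k))^p$ (at $k=0$, $\beta_0 = 1$). Equivalently, one can prove the claim by a one-line induction: $\sum_i \beta_{k+1,i} = \beta_k + (1-\beta_k)\sum_i \beta_{k,i} = \beta_k + (1-\beta_k)\cdot 1 = 1$.

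The main obstacle is really bookkeeping: reconciling the starting index of the recursion (where $\bar\bs_0 = 0$ and the first effective averaging coefficient equals $1$) with the sum-index convention stated in the lemma, and verifying the reindexings inside the product so that the $p=1$ identity lines up correctly. Once these bookkeeping issues are handled, both the explicit formula and the normalization are elementary consequences of the linear recurrence and the telescoping identity above.
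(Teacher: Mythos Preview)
Your proposal is correct and matches the paper's own proof: both unroll the convex-combination recurrence $\bar\bs_{k+1} = (1-\beta_k)\bar\bs_k + \beta_k\bs_k$ to obtain $\beta_{k,i} = \beta_i\prod_{\ell=i+1}^{k}(1-\beta_\ell)$, then reindex the product and specialize at $p=1$, and both prove $\sum_i\beta_{k,i}=1$ by the one-line induction you describe (the paper does not use your telescoping alternative, but that variant is also fine). Your caveat about index bookkeeping is well placed, since the paper itself is somewhat loose there (e.g.\ its base case reads $\beta_{1,1}=c^p/(c+1)^p$, which already flags the same off-by-one convention you anticipated).
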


\begin{proof}

\begin{eqnarray*}
\bar\bs_{k+1} &=& \frac{c^p}{(c+k)^p} \bs_k + \left(1-\frac{c^p}{(c+k)^p}\right)\bar\bs_k\\
&=& \frac{c^p}{(c+k)^p} \bs_k + \frac{c^p}{(c+k-1)^p}\left(1-\frac{c^p}{(c+k)^p}\right) \bs_{k-1} + \left(1-\frac{c^p}{(c+k)^p}\right)\left(1-\frac{c^p}{(c+k-1)^p}\right)\bar\bs_{k-1}\\
&=& \sum_{i=0}^k \frac{c^p}{(c+k-i)^p}\prod_{j=0}^{i-1} \left(1-\frac{c^p}{(c+k-j)^p}\right)\bs_{k-i}\\
&\overset{l = k-i}{=}& \sum_{l=1}^k \underbrace{\frac{c^p}{(c+l)^p}\prod_{j=0}^{k-l-1} \left(1-\frac{c^p}{(c+k-j)^p}\right)}_{\beta_{k,l}}\bs_{l}.
\end{eqnarray*}
If $p = 1$, then 

\[
\beta_{k,i} = \frac{c}{c+i}\prod_{l=0}^{k-i-1} \frac{k-l}{c+k-l} =\frac{c}{c+i} \frac{k(k-1)(k-2)\cdots (i+1)}{(c+k)(c+k-1)\cdots (c+i+1)}
= \frac{c}{c+i} \prod_{j=0}^{c} \frac{i+j+1}{c+k-j} 
\]

For all $p$, to show the sum is 1, we do so recursively. At $k = 1$, $\beta_{1,1} = \frac{c^p}{(c+1)^p}$. Now, if $\sum_{i=0}^{k-1} \beta_{k-1,i} = 1$, then for $i \leq k-1$
\[
\beta_{k,i} =   \left(1-\frac{c^p}{(c+k)^p}\right)\beta_{k-1,i}, \quad i \leq k-1
\]
and for $i = k$, $\beta_{k,k} = \frac{c^p}{(c+k)^p}$. Then
\[
\sum_{i=1}^k \beta_{k,i} = \beta_{k,k} +  \left(1-\frac{c^p}{(c+k)^p}\right)\sum_{l=1}^{k-1} \beta_{k-1,i} = \frac{c^p}{(c+k)^p} + \left(1-\frac{c^p}{(c+k)^p}\right) = 1.
\]
\end{proof}

\subsection{Averaging}
\label{app:sec:averaging}

In the vanilla Frank-Wolfe method, we have two players $s$ and $x$, and as $x\to x^*$, $s$ may oscillate around the solution fascet however it would like, so that its average is $x^*$ but $\|s-x^*\|_2$ remains bounded away from 0. However, we now show that if we replace $s$ with $\bar s$, whose velocity slows down, then it must be that $\|s-x^*\|_2$ decays.

\begin{lemma}[Continuous averaging]
Consider some vector trajectory $v(t)\in \mathbb R^n$, and suppose 
\begin{itemize}
    \item $\|v(t)\|_2 \leq D$ for arbitrarily large $t$

\item  $\|v'(t)\|_2 = \beta(t) D$  

\item  $\frac{1}{2}\|\int_t^\infty \gamma(\tau) v(\tau) d\tau\|_2^2 = O(1/t^q)$  for $q > 0$.
\end{itemize}
Then $\|v(t)\|^2_2 \leq O(t^{q/2+p-1})$.

\end{lemma}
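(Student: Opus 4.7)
The plan is to turn the three hypotheses into a pointwise bound on $\|v(t_0)\|_2$ by a local window argument. Intuitively, the derivative hypothesis forces $v$ to stay nearly constant on a window whose length scales like $V/(D\beta(t_0))$, where $V := \|v(t_0)\|_2$; over such a window the integral $\int \gamma v$ is therefore close to $V\gamma(t_0)\Delta$ in magnitude; and the tail hypothesis caps that same window integral by $O(t_0^{-q/2})$. Squeezing the two estimates together pins down $V^2$ as a power of $t_0$.

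To carry this out, I would first fix a large $t_0$ and set $V := \|v(t_0)\|_2$. From $\|v'(\tau)\|_2 = \beta(\tau)D$ and monotonicity of $\beta$, for every $\tau \in [t_0, t_0+\Delta]$ we have $\|v(\tau)-v(t_0)\|_2 \leq D\,\Delta\,\beta(t_0)$. Choosing $\Delta = V/(2D\beta(t_0))$ makes this at most $V/2$, so $v(\tau)$ stays in a half-ball around $v(t_0)$ on the window. Projecting $v(\tau)$ onto the fixed unit direction $v(t_0)/V$ then yields the lower bound
\[
\left\|\int_{t_0}^{t_0+\Delta}\gamma(\tau)v(\tau)\,d\tau\right\|_2 \;\geq\; \frac{V}{2}\int_{t_0}^{t_0+\Delta}\gamma(\tau)\,d\tau \;\geq\; \frac{V\,\gamma(t_0+\Delta)\,\Delta}{2}.
\]

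For the matching upper bound, I would write the window integral as the difference of the two tails $\int_{t_0}^{\infty}\gamma v - \int_{t_0+\Delta}^{\infty}\gamma v$, each of which has norm $O(t_0^{-q/2})$ by hypothesis~(3), so the triangle inequality gives an $O(t_0^{-q/2})$ upper bound. Equating the two bounds, substituting $\gamma(t_0)\asymp c/t_0$, $\beta(t_0)\asymp c^p/t_0^p$, and $\Delta \asymp V\,t_0^p/(Dc^p)$, and solving the resulting inequality for $V^2$ gives the advertised polynomial decay exponent in $t_0$.

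The step I expect to fight hardest is the lower bound on the window integral. A naive triangle-inequality split only tells us $\|\!\int\!\gamma v\|_2 \geq \|v(t_0)\|\!\int\!\gamma\,d\tau - \|\!\int\!\gamma(v(\tau)-v(t_0))\,d\tau\|$, and one must check that the subtracted correction, at most $(V/2)\!\int\!\gamma\,d\tau$, really is absorbed rather than dominant; this is exactly why the half-ball radius is chosen as $V/2$ and not something larger. A secondary bookkeeping subtlety is that the window length $\Delta$ depends on the unknown $V$, so the inequality relating upper and lower bounds is implicit in $V$ and must be solved self-consistently; the exponent $q/2+p-1$ arises precisely from this coupling between the two length scales $1/\gamma(t_0)$ and $V/\beta(t_0)$.
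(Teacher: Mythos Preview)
Your approach is correct and genuinely different from the paper's. The paper differentiates the squared tail $\tfrac{1}{2}\|\int_t^\infty \gamma v\|_2^2$ twice in $t$: the first derivative gives $-\gamma(t)\,v(t)^T\!\int_t^\infty\gamma v$, and the second isolates $\gamma(t)\|v(t)\|_2^2$ minus a cross term $\int_t^\infty\gamma(\tau)\,v(\tau)^T v'(t)\,d\tau$, which it bounds by $\|\int_t^\infty\gamma v\|_2\cdot D\beta(t) = O(t^{-q/2-p})$; it then invokes the heuristic that an $O(1/t^q)$ decaying quantity has derivative $O(1/t^{q+1})$ to close the chain. Your local-window argument reaches the same exponent without that heuristic: freezing $v$ on a window of length $\Delta\asymp V/\beta(t_0)$ and sandwiching $\int_{t_0}^{t_0+\Delta}\gamma v$ between $\tfrac{1}{2} V\gamma(t_0+\Delta)\Delta$ (from near-constancy) and the difference of two $O(t_0^{-q/2})$ tails yields $V^2\,\gamma(t_0)/\beta(t_0) = O(t_0^{-q/2})$ directly. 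The one piece of bookkeeping you should make explicit is that $V\leq D$ forces $\Delta \leq 1/(2\beta(t_0)) = O(t_0^{p})$, so for $p\leq 1$ one has $t_0+\Delta\asymp t_0$ and neither $\gamma(t_0+\Delta)$ nor the tail bound at $t_0+\Delta$ degrades. Your route is more elementary and sidesteps the paper's derivative heuristic, which is not literally valid pointwise.
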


\begin{proof}

We start with the orbiting property.
\[\frac{d}{dt} \left(\frac{1}{2}\|\int_t^\infty \gamma(\tau) v(\tau) d\tau\|_2^2\right) = -\int_t^\infty \gamma(t)\gamma(\tau) v(\tau)^Tv(t) d\tau \leq 0.
\]
Since this is happening asymptotically, then the negative derivative of the LHS must be upper bounded by the negative derivative of the RHS. That is, if a function is decreasing asymptotically at a certain rate, then its negative derivative should be decaying asymptotically at the negative derivative of this rate. So,

\[
\int_t^\infty \gamma(\tau) v(\tau)^Tv(t) d\tau \leq  O(1/t^q).
\]
This indicates either that $\|v(t)\|_2$ is getting smaller (converging) or $v(t)$ and its average are becoming more and more uncorrelated (orbiting).

Doing the same trick again with the negative derivative,
\[
-\frac{d}{dt}\int_t^\infty \gamma(\tau) v(\tau)^Tv(t) d\tau  = \gamma(t)\|v(t)\|_2^2 - \int_t^\infty \gamma(\tau) v(\tau)^Tv'(t)  d\tau
\]
By similar logic, this guy should also be decaying at a rate $O(1/t^{q+1})$, so 

\[
\gamma(t)\|v(t)\|_2^2  \leq \frac{C_2}{t^{q+1}} +  \int_t^\infty \gamma(\tau) v(\tau)^Tv'(t)  d\tau
\leq \frac{C_2}{t^{q+1}} +  \underbrace{\|\int_t^\infty \gamma(\tau) v(\tau)d\tau\|_2}_{\leq O(1/t^{q/2})}D\beta(t) = \frac{C_2}{t^{q+1}} + \frac{C_3}{t^{q/2+p}}
\]

Therefore 
\[
\|v(t)\|_2^2  \leq  \frac{C_2}{t^{q}} + \frac{C_3}{t^{q/2+p-1}} = O(\frac{1}{t^{q/2+p-1}}).
\]
\end{proof}

\begin{corollary}
Suppose $f$ is $\mu$-strongly convex. Then 
\[
\|\bar s(t)-x(t)\|_2^2 \leq Ct^{-(q/2+p-1)}
\]
for some constant $C>0$.
\end{corollary}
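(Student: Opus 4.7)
The plan is to apply the preceding \emph{Continuous averaging lemma} with $v(t) = \bar s(t) - x(t)$ and then read off $q$ from the strong convexity bound on $\|x(t)-x^*\|_2$. So first I would identify the three hypotheses of that lemma with concrete quantities in the (\AvgflowFW) dynamics, and then invoke it.

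First, boundedness: since $s(\tau) \in \mathcal{D}$ for all $\tau$ and $\bar s(t)$ is a convex combination of these (the coefficients integrate to $\bar\beta_t \leq 1$ along with the initial $\bar s(0)$, and by the accumulation result in Appendix \ref{app:sec:accumulation} we may normalize appropriately), we have $\bar s(t) \in \mathcal{D}$; likewise $x(t) \in \mathcal{D}$. Hence $\|v(t)\|_2 \leq \diam(\mathcal D) = D$, verifying the first hypothesis.

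Next, the derivative bound: differentiating $v$ directly,
\[
v'(t) = \dot{\bar s}(t) - \dot x(t) = \beta(t)(s(t) - \bar s(t)) - \gamma(t)(\bar s(t) - x(t)).
\]
Since $\|s(t) - \bar s(t)\|_2 \leq D$ and $\|\bar s(t) - x(t)\|_2 \leq D$, and since $\gamma(t) = c/(c+t) \leq (c/(c+t))^p = \beta(t)$ for $p \leq 1$, we get $\|v'(t)\|_2 \leq 2D\beta(t)$, matching (up to a harmless constant) the second hypothesis.

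For the integral hypothesis, the key observation is
\[
\int_t^\infty \gamma(\tau) v(\tau)\,d\tau = \int_t^\infty \dot x(\tau)\,d\tau = x^* - x(t),
\]
using that $x(\tau) \to x^*$ (justified by $\mu$-strong convexity together with the established decay of $\mE(t)$). Then $\mu$-strong convexity gives
\[
\tfrac{1}{2}\|x^* - x(t)\|_2^2 \leq \tfrac{1}{\mu}\bigl(f(x(t)) - f(x^*)\bigr) = \tfrac{1}{\mu}\mE(t) = O(1/t^q),
\]
where $q$ is determined by the applicable flow rate: globally $q = 1-p$ from the global-rate theorem, and locally $q = c$ (up to the $\log t$ factor absorbed into constants) from the post-identification theorem. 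Substituting this $q$ into the conclusion of the averaging lemma yields $\|v(t)\|_2^2 \leq C t^{-(q/2+p-1)}$.

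The main obstacle I anticipate is not the calculation but cleanly matching the averaging lemma's abstract hypotheses to the trajectory; in particular, checking convergence of $x(t) \to x^*$ rigorously (so that the improper integral equals $x^* - x(t)$), and confirming $\beta(t)$ dominates $\gamma(t)$ so that the two contributions to $v'(t)$ can be merged. Everything else is essentially a substitution of previously established rates into the averaging lemma.
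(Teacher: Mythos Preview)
Your proposal is correct and follows essentially the same route as the paper: set $v(t)=\bar s(t)-x(t)$, note that $\beta(t)\geq\gamma(t)$ handles the boundedness and derivative hypotheses of the continuous averaging lemma, and for the integral hypothesis use $\int_t^\infty\gamma(\tau)v(\tau)\,d\tau=x^*-x(t)$ together with strong convexity to get the $O(1/t^q)$ bound. Your write-up is in fact more careful than the paper's terse sketch (you spell out $v'(t)$ explicitly and flag the need for $x(t)\to x^*$), but the argument is the same.
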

\begin{proof}
Taking $v(t) = \bar s(t)-x(t)$, it is clear that if $\beta(t) \geq \gamma(t)$ then the first two conditions are satisfied. 
In the third condition, note that 
\[
\int_t^\infty \gamma(\tau) (\bar s(\tau)-x(\tau))d\tau = \int_t^\infty \dot x(\tau) d\tau = x^*-x(t)
\]
and therefore
\[
\frac{1}{2}\|\int_t^\infty \gamma(\tau) v(\tau) d\tau\|_2^2 = \frac{1}{2}\|x^*-x(t)\|_2^2 \leq \mu (f(x)-f^*)
\]
by strong convexity.
\end{proof}

\begin{lemma}[Discrete averaging]
Consider some vector trajectory $\bv_k\in \mathbb R^n$. Then the following properties cannot all be true.
\begin{itemize}
    \item $\|\bv_k\|_2 \leq D$ for arbitrarily large $k$

\item  $\|\bv_{k+1}-\bv_k\|_2 \leq \beta_k D$  

\item  $\frac{1}{2}\|\sum_{i=k}^\infty  \gamma_k \bv_k\|_2^2 \leq \frac{C_1}{k^q}$  for $q > 0$.
\end{itemize}
Then $\|\bv_k\|^2_2 \leq O(k^{q/2+p-1})$.

\end{lemma}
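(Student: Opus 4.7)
The plan is to port the continuous-time argument line-by-line to the discrete setting, using summation by parts in place of integration by parts. Define $W_k := \sum_{i=k}^\infty \gamma_i \bv_i$, so the third hypothesis reads $\tfrac{1}{2}\|W_k\|_2^2 \leq C_1/k^q$, i.e.\ $\|W_k\|_2 \leq \sqrt{2C_1}/k^{q/2}$. The discrete analog of $\dot W(t) = -\gamma(t) v(t)$ is the trivial recursion $W_k - W_{k+1} = \gamma_k \bv_k$, and expanding $\|W_k\|_2^2 = \|W_{k+1} + \gamma_k \bv_k\|_2^2$ gives the discrete energy identity
\[
E_k - E_{k+1} = \gamma_k \bv_k^T W_{k+1} + \tfrac{\gamma_k^2}{2}\|\bv_k\|_2^2,
\]
the exact counterpart of $-\dot E(t) = \gamma(t) v(t)^T W(t)$ used in the continuous proof.

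The heart of the argument is to isolate $\gamma_k \|\bv_k\|_2^2$ with a term that carries the second hypothesis $\|\bv_{k+1}-\bv_k\|_2 \leq \beta_k D$. Adding and subtracting $\bv_k^T W_{k+1}$ in the telescoping expression $\bv_k^T W_k - \bv_{k+1}^T W_{k+1}$, one verifies
\[
\gamma_k \|\bv_k\|_2^2 \;=\; \bigl(\bv_k^T W_k - \bv_{k+1}^T W_{k+1}\bigr) \;-\; (\bv_{k+1} - \bv_k)^T W_{k+1},
\]
which is the discrete analog of $\gamma(t)\|v(t)\|_2^2 = -\tfrac{d}{dt}[v(t)^T W(t)] + v'(t)^T W(t)$. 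The ``derivative'' term is now direct: Cauchy--Schwarz together with the second and third hypotheses yields $|(\bv_{k+1}-\bv_k)^T W_{k+1}| \leq \beta_k D \cdot O(1/k^{q/2}) = O(1/k^{q/2+p})$.

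For the telescoping term, I would sum from $K$ to $\infty$, obtaining $\sum_{k\geq K}\bigl(\bv_k^T W_k - \bv_{k+1}^T W_{k+1}\bigr) = \bv_K^T W_K = O(1/K^{q/2})$, since by hypothesis 1 the boundary term at infinity vanishes. Combining the two pieces gives an aggregate bound
\[
\sum_{k=K}^\infty \gamma_k \|\bv_k\|_2^2 \;=\; O\!\left(\tfrac{1}{K^{q/2+p-1}}\right),
\]
and since $\gamma_k = \Theta(1/k)$, a standard averaging/monotonization argument (or extraction of a subsequence) yields the desired asymptotic bound $\|\bv_k\|_2^2 = O(1/k^{q/2+p-1})$ — matching the continuous rate, modulo the sign convention typo in the statement.

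The main obstacle is the same one the continuous proof finesses heuristically: converting the summed/telescoped bound on $\bv_k^T W_k$ into a \emph{pointwise} bound on $\gamma_k \|\bv_k\|_2^2$ without additional regularity on $k \mapsto \bv_k^T W_k$. In the continuous setting this is hidden in the step ``the negative derivative is of order $O(1/t^{q+1})$'', which is really a statement about typical rather than worst-case behavior. In the discrete setting I would either (i) assume monotonicity/quasi-monotonicity of $E_k$ and its first difference, which is natural for the intended application (local convergence after manifold identification), or (ii) accept a $\liminf_k \|\bv_k\|_2^2 \leq O(1/k^{q/2+p-1})$ reading of the lemma, which is already enough to drive the $O(1/k^{3p/2})$ local rate in the theorem that uses it.
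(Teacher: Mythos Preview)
Your route and the paper's are both discrete transcriptions of the continuous argument, but they diverge after the first energy identity. The paper does \emph{not} sum: it applies the heuristic ``if $a_k=O(1/k^r)$ then $a_k-a_{k+1}=O(1/k^{r+1})$'' twice---once to $E_k=\tfrac12\|W_k\|_2^2$ to bound $F_k:=\tfrac{\gamma_k}{2}\|\bv_k\|_2^2+\bv_k^TW_{k+1}$, and again to $F_k$ itself---and then algebraically isolates $\tfrac{\gamma_k+\gamma_{k+1}}{2}\|\bv_k\|_2^2$ from $F_k-F_{k+1}$. This second differencing produces an extra cross term $\tfrac{\gamma_{k+1}}{2}\|\bv_{k+1}-\bv_k\|_2^2$, so the paper's final pointwise bound is actually $\|\bv_k\|_2^2=O(1/k^{\min\{q/2+p-1,\,2p\}})$, slightly weaker than the lemma statement (though the $2p$ term is not binding downstream where $q=p$). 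Your approach instead derives the single identity $\gamma_k\|\bv_k\|_2^2=(\bv_k^TW_k-\bv_{k+1}^TW_{k+1})+(\bv_{k+1}-\bv_k)^TW_{k+1}$ (note the sign: it is $+$, not $-$ as you wrote) and telescopes, which yields a \emph{rigorous} tail bound $\sum_{k\ge K}\gamma_k\|\bv_k\|_2^2=O(1/K^{q/2+p-1})$ without ever invoking the heuristic, and without the spurious $2p$ term.

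You correctly identify the shared gap: converting to a pointwise estimate. In the paper this is the unjustified double use of the differencing heuristic; in your version it is the extraction step from the tail sum. Neither is rigorous as stated---your option (ii) only delivers the bound along a subsequence, and option (i) is an assumption the paper does not make either. So your argument is at least as complete as the paper's, arguably cleaner (the non-rigorous step is isolated in one place rather than two), and your honest diagnosis of the obstacle is exactly right.
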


\begin{proof}
The idea is to recreate the same proof steps as in the previous lemma. Note that the claimis not that these inequalities happen at each step, but that they must hold asymptotically in order for the asymptotic decay rates to hold. So
\[ 
\frac{1}{2}\|\sum_{i=k}^\infty \gamma_i \bv_i\|_2^2  -\frac{1}{2}\|\sum_{i=k}^\infty \gamma_{i+1} \bv_{i+1}\|_2^2 
=\frac{\gamma_k^2}{2}\|\bv_k\|_2^2 + \gamma_k\bv_k^T\left(\sum_{i=k}^\infty \gamma_{i+1} \bv_{i+1}\right)  \leq \frac{C_1}{(k+1)^q}-\frac{C_1}{k^q}  =\frac{C_2}{k^{q}}
\]

and therefore
\[
\frac{\gamma_k}{2}\|\bv_k\|_2^2 + \bv_k^T\left(\sum_{i=k}^\infty \gamma_{i+1} \bv_{i+1}\right) \leq \frac{C_1}{(k+1)^{q+1}}-\frac{C_1}{k^{q+1}} = \frac{C_2}{k^{q+1}}.
\]
Next,
\begin{eqnarray*}
&&\bv_k^T\left(\sum_{i=k}^\infty \gamma_{i+1} \bv_{i+1}\right)  -  \bv_{k+1}^T\left(\sum_{i=k+1}^\infty \gamma_{i+1} \bv_{i+1}\right) + \bv_{k}^T\left(\sum_{i=k+1}^\infty \gamma_{i+1} \bv_{i+1}\right)- \bv_{k}^T\left(\sum_{i=k+1}^\infty \gamma_{i+1} \bv_{i+1}\right) \\
&=&\gamma_{k+1} \bv_k^T  \bv_{k+1}  + (\bv_k-\bv_{k+1})^T\left(\sum_{i=k+1}^\infty \gamma_{i+1} \bv_{i+1}\right)
\end{eqnarray*}

\begin{eqnarray*}
\frac{\gamma_k}{2}\|\bv_k\|_2^2 - 
\frac{\gamma_{k+1}}{2}\|\bv_{k+1}\|_2^2 +\bv_k^T\left(\sum_{i=k}^\infty \gamma_{i+1} \bv_{i+1}\right)  -  \bv_{k+1}^T\left(\sum_{i=k+1}^\infty \gamma_{i+1} \bv_{i+1}\right) =\\
\frac{\gamma_k}{2}\|\bv_k\|_2^2 \underbrace{- 
\frac{\gamma_{k+1}}{2}\|\bv_{k+1}\|_2^2
+
\gamma_{k+1} \bv_k^T  \bv_{k+1}}_{-\frac{\gamma_{k+1}}{2}\|\bv_{k+1}-\bv_k\|_2^2 + \frac{\gamma_{k+1}}{2}\|\bv_k\|_2^2}\  +  (\bv_k-\bv_{k+1})^T\left(\sum_{i=k+1}^\infty \gamma_{i+1} \bv_{i+1}\right) \leq \frac{C_3}{k^{q+1}}
\end{eqnarray*}
Therefore 
\[
\frac{\gamma_k+\gamma_{k+1}}{2}\|\bv_k\|_2^2  \leq \frac{C_3}{k^{q+1}} + \underbrace{(\bv_{k+1}-\bv_k)^T\left(\sum_{i=k+1}^\infty \gamma_{i+1} \bv_{i+1}\right)}_{O(\beta_k/k^{q/2})} + \underbrace{\frac{\gamma_{k+1}}{2}\|\bv_{k+1}-\bv_k\|_2^2}_{O(\gamma_k\beta_k^2)}
\]
Finally,
\[
\|\bv_k\|_2^2  \leq  \frac{C_3}{k^{q}} + \frac{C_4}{k^{q/2+p-1}} + \frac{C_5}{k^{2p}} = O(1/k^{\min\{q/2+p-1,2p\}}).
\]
\end{proof}

\begin{corollary}
Suppose $f$ is $\mu$-strongly convex. Then if $f(x)-f^* = O(k^{-q})$
\[
\|\bar \bs_k-\bx_k\|_2^2 \leq C\max\{k^{-(q/2+p-1)},k^{-2p}\} 
\]
for some constant $C>0$.
\end{corollary}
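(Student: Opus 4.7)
The plan is to apply the preceding \emph{Discrete averaging} lemma with the choice $\bv_k = \bar\bs_k - \bx_k$, verifying each of the three hypotheses in turn and reading off the conclusion. Since the averaging update and the iterate update both live in $\mD$, the first hypothesis is immediate: $\|\bv_k\|_2 \leq \diam(\mD) \leq 2D$ for all $k$, so we can take $D$ in the lemma to be (twice) the diameter of the feasible set.

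For the second hypothesis I would estimate the increment $\bv_{k+1} - \bv_k$ by splitting it into $(\bar\bs_{k+1} - \bar\bs_k)$ and $-(\bx_{k+1} - \bx_k)$. The averaging step gives $\bar\bs_{k+1}-\bar\bs_k = \beta_k(\bs_k - \bar\bs_k)$ with $\|\bs_k-\bar\bs_k\|_2 \leq 2D$, and the iterate step gives $\bx_{k+1}-\bx_k = \gamma_k(\bar\bs_k - \bx_k)$ with $\|\bar\bs_k - \bx_k\|_2 \leq 2D$. Since $p \leq 1$ and $\gamma_k = c/(c+k) \leq 1$, we have $\gamma_k \leq \gamma_k^p = \beta_k$, so by the triangle inequality $\|\bv_{k+1}-\bv_k\|_2 \leq 2\beta_k D + 2\gamma_k D \leq 4D\beta_k$. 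This verifies the second hypothesis (up to absorbing the factor $4$ into the constant $D$ of the lemma).

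For the third hypothesis I would use that the iterate update writes $\bx_{i+1} - \bx_i = \gamma_i (\bar\bs_i - \bx_i) = \gamma_i \bv_i$, so the tail sum telescopes:
\[
\sum_{i=k}^{\infty} \gamma_i \bv_i \;=\; \sum_{i=k}^{\infty}(\bx_{i+1}-\bx_i) \;=\; \bx^* - \bx_k,
\]
where I use that $\bx_k \to \bx^*$ (which follows from $f(\bx_k)-f^* \to 0$ together with $\mu$-strong convexity). Squaring and invoking strong convexity yields
\[
\tfrac{1}{2}\Bigl\|\sum_{i=k}^\infty \gamma_i \bv_i\Bigr\|_2^2 \;=\; \tfrac{1}{2}\|\bx^*-\bx_k\|_2^2 \;\leq\; \tfrac{1}{\mu}(f(\bx_k)-f^*) \;=\; O(k^{-q}),
\]
which is precisely the third hypothesis of the lemma with the same exponent $q$.

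With all three conditions verified, the lemma directly yields $\|\bv_k\|_2^2 \leq C\, k^{-\min\{q/2+p-1,\,2p\}}$, which is exactly $C\max\{k^{-(q/2+p-1)},k^{-2p}\}$. The only non-routine step is the telescoping argument for condition (3); everything else is straightforward bookkeeping on the update rules and the diameter of $\mD$. I do not anticipate a genuine obstacle, though one has to be careful that the convergence $\bx_k \to \bx^*$ (needed to make the infinite tail sum well-defined) is available before invoking strong convexity; this follows from the assumption $f(\bx_k)-f^*=O(k^{-q})$ which is supplied by the hypothesis of the corollary.
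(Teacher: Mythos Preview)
Your proposal is correct and follows essentially the same approach as the paper: set $\bv_k=\bar\bs_k-\bx_k$, verify the three hypotheses of the discrete averaging lemma (boundedness, $O(\beta_k)$ increments via $\gamma_k\le\beta_k$, and the telescoping identity $\sum_{i\ge k}\gamma_i\bv_i=\bx^*-\bx_k$ combined with strong convexity), and read off the conclusion. Your write-up is in fact more explicit than the paper's, which dispatches the first two conditions with ``it is clear that if $\beta(t)\ge\gamma(t)$ then the first two conditions are satisfied''; the only minor slip is an index shift in $\bar\bs_{k+1}-\bar\bs_k=\beta_{k+1}(\bs_{k+1}-\bar\bs_k)$ rather than $\beta_k(\bs_k-\bar\bs_k)$, which is harmless for the bound.
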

\begin{proof}
Taking $\bv_k = \bar \bs_k-\bx_k$, it is clear that if $\beta(t) \geq \gamma(t)$ then the first two conditions are satisfied. 
In the third condition, note that 
\[
\sum_{i=k}^\infty \gamma_i (\bar \bs_i-\bx_i) = \sum_{i=k}^\infty \bx_{i+1}-\bx_i = \bx^*-\bx_k
\]
and therefore
\[
\frac{1}{2}\|\sum_{i=k}^\infty \gamma_i (\bar \bs_i-\bx_i)\|_2^2 = \frac{1}{2}\|\bx^*-\bx_k\|_2^2 \leq \mu (f(\bx_k)-f^*)
\]
by strong convexity.
\end{proof}

\subsection{Global rates}
\label{app:sec:global}

\begin{lemma}[Continuous energy function decay]
\label{lem:continuous_energy}
Suppose $c \geq q-1$, and
\[
 g(t) \leq   -\int_0^t \frac{\exp(\alpha(\tau))}{\exp(\alpha(t))}\frac{(c+\tau)^c}{(c + t)^{c}} \frac{C_1}{(b+\tau)^r}d\tau
\]
Then
\[
g(t) \leq -\frac{\left(1-\frac{\alpha(1)}{\exp(\alpha(1))}\right)C_1}{\alpha(t)(1-p)}(c+t)^{1-r} 
\]
 
\end{lemma}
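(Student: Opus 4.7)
The plan is to lower-bound the positive integral
$I(t):=\int_0^t e^{\alpha(\tau)-\alpha(t)}\frac{(c+\tau)^c}{(c+t)^c}\frac{C_1}{(b+\tau)^r}\,d\tau$
so that the assumed inequality $g(t)\leq -I(t)$ translates into the claimed rate. A Laplace-type heuristic makes the target believable: the exponential weight $e^{\alpha(\tau)-\alpha(t)}$ concentrates in a window of width $1/\alpha'(t)=(c+t)^p/c^p$ around $\tau=t$, where simultaneously $(c+\tau)^c/(c+t)^c$ and $(b+\tau)^{-r}$ are within constant factors of $1$ and $(b+t)^{-r}$ respectively. Multiplying these gives $I(t)\gtrsim \frac{C_1}{(b+t)^r}\cdot\frac{(c+t)^p}{c^p}$, which matches the target $\frac{C_1(c+t)^{1-r}}{\alpha(t)(1-p)}$ after using the identity $\alpha(t)(1-p)=c^p(c+t)^{1-p}$.

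To rigorize this, I would rewrite $e^{\alpha(\tau)-\alpha(t)}=\frac{1}{\alpha'(\tau)}\frac{d}{d\tau}e^{\alpha(\tau)-\alpha(t)}$, with $\alpha'(\tau)=\beta(\tau)=c^p/(c+\tau)^p$, and integrate by parts on the interval $[1,t]$. The boundary evaluation at $\tau=t$ supplies the dominant term proportional to $(c+t)^{1-r}/\alpha(t)$. The boundary evaluation at $\tau=1$ produces a residual that combines with the main term into the explicit prefactor $1-\alpha(1)/e^{\alpha(1)}$; this is the reason for the appearance of that curious constant, which records the accumulated ``initial mass'' of the exponential kernel before the window starts contributing. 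The leftover integral after IBP involves derivatives of the slowly varying factors $(c+\tau)^c/(c+t)^c$ and $(b+\tau)^{-r}$, each of relative size $O(1/(c+t))$ compared to the factor itself.

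The condition $c\geq q-1$ is what makes this leftover integrable against the main kernel; it ensures that differentiating the polynomial weight does not outweigh the boundary term, and lets the remainder be absorbed into a constant multiple of the leading contribution. Collecting constants and simplifying via $\alpha(t)(1-p)=c^p(c+t)^{1-p}$ then yields the claimed bound.

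The main obstacle will be the bookkeeping of constants: the factor $1-\alpha(1)/e^{\alpha(1)}$ is finicky and only emerges if the IBP boundary is chosen at $\tau=1$ and the leading term is normalized just so. A secondary subtlety is that keeping both the exponential weight and the polynomial weight $(c+\tau)^c/(c+t)^c$ through the IBP requires care, since discarding either loses a factor of $(c+t)^p$ or $(c+t)^c$ that cannot be recovered afterwards. Once those are handled, the remaining estimates are routine asymptotic comparisons.
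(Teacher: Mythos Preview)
Your integration-by-parts / Laplace approach is a legitimate alternative, but it is not what the paper does. The paper instead expands $e^{\alpha(\tau)}=\sum_{k\geq 1}\alpha(\tau)^k/k!$, integrates each term exactly via $\int_0^t (c+\tau)^{k(1-p)+c-r}\,d\tau$, and then resums using $\sum_{k\geq 1}\alpha(t)^{k+1}/(k+1)!\approx e^{\alpha(t)}/\alpha(t)$; after dividing back by $e^{\alpha(t)}(c+t)^c$ this yields the same leading term $(c+t)^{1-r}/(\alpha(t)(1-p))$ that your boundary evaluation at $\tau=t$ produces. Your IBP route is arguably cleaner and more conceptual; the paper's power-series route is more computational but avoids having to control the leftover integral after IBP.

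One caveat: your account of where the explicit prefactor $1-\alpha(1)/e^{\alpha(1)}$ comes from is off. In the paper it arises from the resummation step, as the ratio $(\exp(\alpha(t))-\alpha(1))/\exp(\alpha(t))$ evaluated at its worst point, not from an IBP boundary at $\tau=1$. In your IBP scheme the lower boundary contributes a term of order $e^{\alpha(1)-\alpha(t)}$, which is exponentially small in $t$ and cannot combine into that particular constant; you would need different bookkeeping to reproduce the paper's exact constant (though of course any positive universal constant suffices for the downstream use). Also, the hypothesis $c\geq q-1$ plays no visible role in the paper's term-by-term integration, so your explanation of its purpose is speculative.
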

\begin{proof}

\begin{eqnarray*}
\frac{g(t)}{C_1}\exp(\alpha(t))(c+t)^{c}&\leq&- \int_0^t \exp(\alpha(\tau)) (c+\tau)^{c-r} d\tau\\
&=& - \int_0^t \sum_{k=1}^\infty \frac{\alpha(\tau)^k}{k!}(c+\tau)^{c-r} d\tau\\
&=& -\int_0^t \sum_{k=1}^\infty \frac{c^{kp}}{k!(1-p)^k}(c+\tau)^{k-pk+c-r} d\tau\\
&\overset{\text{Fubini}}{=}&  -\sum_{k=1}^\infty \frac{c^{kp}}{(1-p)^kk!}\int_0^t (c+\tau)^{k-pk+c-r} d\tau\\
&=&  - \sum_{k=1}^\infty \frac{c^{kp}}{(1-p)^kk!} \frac{(c+t)^{k-pk+c-r+1}-c^{k-pk+c-r+1}}{k-pk+c-r+1} \\
&=&   -\sum_{k=1}^\infty \frac{1}{(k+1)!}\left(\frac{c^{p}}{(c+t)^{p-1}(1-p)}\right)^k(c+t)^{1+c-r} \underbrace{\frac{1}{(1-p)+(c-r+1)/k}\frac{k+1}{k}}_{\geq C_2} \\
&\leq& -C_2 \sum_{k=1}^\infty \frac{(c+t)^{1+c-r}}{(k+1)!}\frac{\alpha(t)^{k+1}}{\alpha(t)}\\
&=& -\frac{C_2(c+t)^{1+c-r} }{\alpha(t)}(\exp(\alpha(t))-\alpha(1))
\end{eqnarray*}
Then

\begin{eqnarray*}
g(t) &\leq& -\frac{C_1C_2}{\alpha(t)}(c+t)^{1-r}\left(1-\frac{\alpha(1)}{\exp(\alpha(t))}\right)\\
 &\leq& -\frac{C_1C_2}{\alpha(t)}(c+t)^{1-r}\left(1-\frac{\alpha(1)}{\exp(\alpha(1))}\right)\\
 &\leq& -\frac{C_1C_3}{\alpha(t)}(c+t)^{1-r}
\end{eqnarray*}
where $C_3 = C_2\left(1-\frac{\alpha(1)}{\exp(\alpha(1))}\right)$  and $C_2 = \frac{1}{1-p}$ satisfies the condition.
\end{proof}
\begin{theorem}[Continuous global rate]
Suppose $0 < p < 1$. 
Then the averaged FW flow decays as $O(1/t^{1-p})$.
\end{theorem}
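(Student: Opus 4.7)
The plan is to establish a differential inequality for $\mE(t) := f(x(t)) - f^*$ of the form $\dot\mE(t) + \gamma(t)\mE(t) \leq h(t)$ with $h(t) = O((c+t)^{p-2})$, and then conclude by the integrating-factor argument encoded in Lemma \ref{lem:continuous_energy}. Differentiating along the flow gives $\dot\mE(t) = \gamma(t)\nabla f(x(t))^T(\bar s(t) - x(t))$; inserting $\pm x^*$ and using convexity in the form $\nabla f(x)^T(x^* - x) \leq -\mE$ yields
$\dot\mE(t) \leq -\gamma(t)\mE(t) + \gamma(t) R(t)$, where the ``lag residual'' is $R(t) := \nabla f(x(t))^T(\bar s(t) - x^*)$.

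The core task is to control $R(t)$. Using the integral representation $\bar s(t) = \int_0^t \beta_{t,\tau} s(\tau)\,d\tau$ from Appendix \ref{app:sec:accumulation} (up to the exponentially vanishing transient $e^{\alpha(0)-\alpha(t)} x^*$), LMO optimality gives $\nabla f(x(\tau))^T(s(\tau) - x^*) \leq 0$ at every past time. Exchanging gradients by $L$-smoothness then gives $\nabla f(x(t))^T(s(\tau) - x^*) \leq 2DL\|x(t) - x(\tau)\|_2$, and the bound $\|\dot x\|_2 \leq 2D\gamma(t)$ coming from the flow definition yields $\|x(t) - x(\tau)\|_2 \leq 2Dc\log\frac{c+t}{c+\tau}$. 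This leaves $R(t) \leq 4LD^2 c \int_0^t \beta_{t,\tau} \log\frac{c+t}{c+\tau}\, d\tau$, which after the change of variable $u = \alpha(\tau)$ (so that $\beta(\tau)\,d\tau = du$) reduces to $\frac{1}{1-p}e^{-\alpha(t)}\int_{\alpha(0)}^{\alpha(t)} e^u(\log\alpha(t) - \log u)\,du$. A Laplace-type estimate at the right endpoint bounds this by $O(1/\alpha(t)) = O((c+t)^{p-1})$.

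Combining everything gives $\dot\mE(t) + \gamma(t)\mE(t) \leq C\gamma(t)(c+t)^{p-1} = O((c+t)^{p-2})$. The standard $(c+t)^c$ integrating factor (equivalently, Lemma \ref{lem:continuous_energy} with $r = 2-p$, after absorbing the $\beta$-integrating factor into the kernel) then yields $\mE(t) \leq O((c+t)^{p-1}) = O(1/t^{1-p})$, provided $c > 1-p$. The main technical obstacle is the log-integral estimate in Step 2: although the Laplace-type heuristic makes the order $1/\alpha(t)$ transparent, rigorously extracting it requires a careful integration by parts exploiting the identity $\beta = \alpha'$, together with verifying that the exponentially small transient left over from the accumulation formula truly is absorbed into lower-order terms. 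Note also that strong convexity of $f$, although assumed in the statement, is not in fact required for this global bound; it is needed only for the discrete counterpart and for the accelerated local rate.
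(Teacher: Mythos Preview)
Your approach is correct and genuinely different from the paper's. The paper never splits through $x^*$; instead it works with the auxiliary quantity $g(t)=\nabla f(x(t))^T(\bar s(t)-x(t))$, differentiates it (picking up a Hessian term bounded by $4LD^2\gamma$), and obtains an integral inequality with kernel $e^{\alpha(\tau)-\alpha(t)}\,(c+\tau)^c/(c+t)^c$ involving the gap. It then replaces $\gap(x(\tau))$ by $h(\tau)$ via convexity and verifies the ansatz $h(t)=C/(c+t)^{1-p}$ using Lemma~\ref{lem:continuous_energy}. Your route --- decompose $\bar s-x=(\bar s-x^*)+(x^*-x)$, use convexity for the second piece, and bound the ``lag residual'' $R(t)$ by LMO optimality at past times plus $L$-smoothness plus the path-length bound $\|x(t)-x(\tau)\|\le 2Dc\log\frac{c+t}{c+\tau}$ --- is more direct: it avoids differentiating $g$, avoids the Hessian, and turns the problem into a single explicit Laplace integral $e^{-A}\int_a^A e^u(\log A-\log u)\,du\sim 1/A$, after which only the standard $(c+t)^c$ Gr\"onwall step is needed. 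One small cleanup: once you have $\dot\mE+\gamma\mE\le C(c+t)^{p-2}$, you do not actually need Lemma~\ref{lem:continuous_energy}; the single integrating factor $(c+t)^c$ (requiring only $c>1-p$) already gives $\mE(t)=O((c+t)^{p-1})$. Your remark that strong convexity is unused here is also correct and matches what the paper's own argument actually does.
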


\begin{proof}
Consider the error function 
\[
g(t) := \nabla f(x(t))^T(\bar s(t)-x(t)), \qquad g(0) = 0.
\]
\begin{eqnarray*}
\dot g(t) &=& \frac{\partial}{\partial t} \nabla f(x)^T(\bar s-x)\\
&=& \left(\frac{\partial}{\partial t}\nabla f(x)^T \right)(\bar s - x)
+
\nabla f(x)^T\left(\frac{\partial}{\partial t}(\bar s - x)\right)
 \\
&=&  \underbrace{ \left(\frac{\partial}{\partial t} \nabla f(x)^T \right)}_{={\dot x}^T\nabla^2 f(x)}(\bar s - x)
+
\nabla f(x)^T \left(\beta(t) (s(t)-\bar s(t)) - \gamma(t)(\bar s(t)-x(t))\right)\\
&\leq & \gamma \underbrace{(\bar s - x)^T\nabla^2 f(x) (\bar s - x)}_{\leq 4LD^2\gamma(t)} + \underbrace{\beta(t)\nabla f(x)^T(s(t)-x(t))}_{-\beta(t)\gap(x)} - (\beta(t)+\gamma(t)) \underbrace{\nabla f(x)^T (\bar s(t) -x(t))}_{=g(t)} 
\end{eqnarray*}

\begin{eqnarray*}
g(t) &\leq&    \int_0^t \frac{\exp(\alpha(\tau))}{\exp(\alpha(t))}\frac{(c+\tau)^c}{(c + t)^{c}} \underbrace{\left(\frac{4LD^2c}{c+\tau} -  \frac{b^p}{(b+\tau)^p}\gap(x(\tau))\right)}_{A(\tau)}d\tau\\
\dot h(t) &\leq&  \gamma(t)g(t) \leq  \gamma(t) \int_0^t \underbrace{\frac{\exp(\alpha(\tau))}{\exp(\alpha(t))}\frac{(c+\tau)^c}{(c + t)^{c}}}_{\mu(\tau)} A(\tau) d\tau 
\end{eqnarray*}
In order for $h(t)$ to decrease, it must be that $\dot h(t) \leq 0$. However, since $\mu(\tau) \geq 0$ for all $\tau \geq 0$, it must be that $A(\tau) \leq 0$, e.g.
\[
  \frac{c^p}{(c+\tau)^p}\gap(x(\tau))\geq \frac{4LD^2c}{c+\tau}.
\]
which would imply $h(t) = O(1/(c+t)^{1-p})$. Let us therefore test the candidate solution
\[
h(t) = \frac{C_3}{(c+t)^{1-p}}.
\]
Additionally, from Lemma \ref{lem:continuous_energy}, if 
\[
A(\tau) \leq -\frac{C_1}{(c+t)}
\quad \Rightarrow\quad
g(t) \leq -\frac{C_1}{\alpha(t)(1-p)}
\]
and therefore 
\begin{eqnarray*}
\dot h(t) &\leq&  \gamma(t)g(t)  \leq -\frac{c}{c+t} \frac{C_1}{c^{p}}\cdot(c+t)^{p-1} \\
\int_0^t \dot(h(\tau))d\tau&\leq& \frac{C_1 c}{(1-p)c^p}(c+t)^{p-1}
\end{eqnarray*}
which satisfies our candidate solution for $C_3 = \frac{C_1 c}{(p-1)c^p}$.
\end{proof}

This term $ (\bar s - x)^T\nabla^2 f(x) (\bar s - x)\leq 4LD^2\gamma(t)$ is an important one to consider when talking about local vs global distance. The largest values of the Hessian will probably not correspond to the indices that are ``active'', and thus this bound is very loose near optimality.

\begin{lemma}[Discrete energy decay]
\label{lem:discrete_energy}
Suppose $0 < p < 1$. 
Consider the error function 
\[
\bg_k := \nabla f(\bx_k)^T(\bar\bs_k-\bx_t).
\]

Then
\[
\bg_k \leq -\sum_{i=0}^{k-1} \beta_{i,i} (\frac{i+1+c}{k+c})^c \gap(\bx_i)   - \beta_{k,k}\gap(\bx_k)+  \frac{4D^2L C_1}{(k+c)^p} + (\frac{c}{k+c})^c \bg_0.
\]
where $C_1 = c^p(1+\frac{1}{(c+1)(1-p)-1})$.
\end{lemma}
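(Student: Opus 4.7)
The plan is to mirror the continuous derivation of the $g(t)$-bound (which preceded this lemma) in discrete time, and then invoke the discrete analog of Lemma \ref{lem:continuous_energy} to control the accumulated error. Concretely, I would first derive a one-step recursion for $\bg_k$ by algebraically expanding
\[
\bar\bs_{k+1}-\bx_{k+1} = (1-\beta_{k+1}-\gamma_k)(\bar\bs_k-\bx_k) + \beta_{k+1}(\bs_{k+1}-\bx_k),
\]
which follows directly from the updates $\bar\bs_{k+1} = (1-\beta_{k+1})\bar\bs_k + \beta_{k+1}\bs_{k+1}$ and $\bx_{k+1} = (1-\gamma_k)\bx_k + \gamma_k\bar\bs_k$. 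Substituting into $\bg_{k+1} = \nabla f(\bx_{k+1})^T(\bar\bs_{k+1}-\bx_{k+1})$, I would then swap $\nabla f(\bx_{k+1})$ for $\nabla f(\bx_k)$ using $L$-smoothness to introduce an error of size at most $L\gamma_k \|\bar\bs_k-\bx_k\|^2 \leq 4LD^2\gamma_k$, and identify $\nabla f(\bx_{k+1})^T(\bs_{k+1}-\bx_{k+1}) = -\gap(\bx_{k+1})$. After absorbing the cross term $\beta_{k+1}\gamma_k\nabla f(\bx_{k+1})^T(\bar\bs_k-\bx_k)$ into the smoothness correction, this should yield a clean recursion of the form
\[
\bg_{k+1} \leq (1-\gamma_k)(1-\beta_{k+1})\bg_k - \beta_{k+1}\gap(\bx_{k+1}) + 4LD^2\gamma_k^2.
\]

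Next, I would unroll the recursion $k$ times, producing
\[
\bg_k \leq \prod_{j=0}^{k-1}(1-\gamma_j)(1-\beta_{j+1})\,\bg_0 \;+\; \sum_{i=0}^{k-1} \Bigl[\prod_{j=i+1}^{k-1}(1-\gamma_j)(1-\beta_{j+1})\Bigr]\bigl(4LD^2\gamma_i^2 - \beta_{i+1}\gap(\bx_{i+1})\bigr).
\]
To convert the $\gamma$-products to the target $((c+i+1)/(c+k))^c$ and $(c/(c+k))^c$ factors, I would use $1-\gamma_j \leq e^{-\gamma_j}$ combined with the integral lower bound $\sum_{j=a}^{b-1}\gamma_j \geq c\log((c+b)/(c+a))$. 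Since the $(1-\beta_{j+1})$ factors multiply only nonpositive quantities in the $\gap$ sum (so dropping them gives a valid upper bound after sign flip) and $\bg_0$ can be handled separately, the resulting coefficients match $\beta_{i,i}((c+i+1)/(c+k))^c$ on $\gap(\bx_i)$ and $(c/(c+k))^c$ on $\bg_0$.

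For the remaining error accumulation $4LD^2\sum_{i=0}^{k-1}\prod_{j=i+1}^{k-1}(1-\gamma_j)(1-\beta_{j+1})\gamma_i^2$, this is precisely the discrete analog of the integral bounded by Lemma \ref{lem:continuous_energy}. I would expand $\prod(1-\beta_{j+1}) \leq \exp\bigl(-\tfrac{c^p}{1-p}[(c+k)^{1-p}-(c+i+2)^{1-p}]\bigr)$ and use a discrete Fubini-swap (the summation analog of the power-series expansion in Lemma \ref{lem:continuous_energy}'s proof), which after collecting $k$-dependent and $i$-dependent factors yields a bound $\tfrac{4D^2LC_1}{(c+k)^p}$ with $C_1 = c^p\bigl(1+\tfrac{1}{(c+1)(1-p)-1}\bigr)$. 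The main obstacle will be step three: matching the exact constant $C_1$ requires carefully tracking the boundary term at $\tau=0$ (which in the continuous proof becomes the $\alpha(1)/e^{\alpha(1)}$ correction) against the boundary term at $i=0$ in the discrete sum. A secondary subtlety is that the statement isolates $\beta_{k,k}\gap(\bx_k)$ from the sum; this corresponds to reindexing the unrolled sum so that the $i=k-1$ summand's $\gap(\bx_k)$ carries weight $\beta_{k,k}$ (with its trivial product factor $((c+k)/(c+k))^c=1$) while the earlier indices carry the full $((c+i+1)/(c+k))^c$ decay.
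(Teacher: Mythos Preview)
Your overall strategy is the same as the paper's: establish a one-step recursion $\bg_k \le \mu_k\,\bg_{k-1}-\beta_{k,k}\gap(\bx_k)+(\text{error})$ with $\mu_k=(1-\beta_k)(1-\gamma_{k-1})$, unroll it, bound $\prod_j(1-\gamma_{j-1})$ by $((i+c)/(k+c))^c$, and control the accumulated error using the extra decay from $\prod_j(1-\beta_j)$. The paper reaches the same recursion by writing $\bar\bs_k=\sum_i\beta_{k,i}\bs_i$ and peeling off the top index via $\beta_{k,i}=(1-\beta_k)\beta_{k-1,i}$; your direct algebraic expansion of $\bar\bs_{k+1}-\bx_{k+1}$ is an equivalent (and slightly cleaner) route to the identical $\mu_k$-recursion.

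There is, however, a genuine error in your recursion's error term. Once the cross term $\beta_{k+1}\gamma_k\nabla f(\bx_{k+1})^T(\bar\bs_k-\bx_k)$ is absorbed into the prefactor you correctly obtain $(1-\beta_{k+1})(1-\gamma_k)\nabla f(\bx_{k+1})^T(\bar\bs_k-\bx_k)$; the smoothness swap then contributes
\[
(1-\beta_{k+1})(1-\gamma_k)\cdot L\gamma_k\|\bar\bs_k-\bx_k\|_2^2 \;\le\; 4LD^2\gamma_k(1-\beta_{k+1})(1-\gamma_k),
\]
which is $O(\gamma_k)$, not $O(\gamma_k^2)$. This is exactly the paper's error term $4D^2L\gamma_{k-1}\mu_k$. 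The distinction is not cosmetic: with a $\gamma_i^2$ error the accumulation $\sum_i\gamma_i^2\prod_{j>i}\mu_j$ would be trivially summable and the $(1-\beta_j)$ factors irrelevant, whereas with the correct $\gamma_i$ the sum $\sum_i\gamma_i\prod_{j>i}\mu_j$ is only controlled because $\prod_j(1-\beta_j)\le\exp\bigl(-\tfrac{c^p}{1-p}[(c+k)^{1-p}-(c+i)^{1-p}]\bigr)$ supplies genuine extra decay. This is precisely where the hypothesis $p<1$ enters and where the constant $C_1=c^p\bigl(1+\tfrac{1}{(c+1)(1-p)-1}\bigr)$ originates. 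The paper bounds the corrected accumulation by combining this exponential estimate with a convergent series in $(k-i)^{-(c+1)(1-p)}$; your proposed discrete power-series/Fubini argument may also close, but only after you carry the $O(\gamma_i)$ error through correctly.
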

Importantly, $C_1$ is finite only if $p < 1$. When $p = 1$, the right hand side is at best bounded by a constant, and does not decay, which makes it impossible to show method convergence.

\begin{proof}
Define $\bz_k = \nabla f(\bx_k)$, $\bg_k = \bz_{k}^T(\bar\bs_{k}-\bx_{k})$.  Then 

\begin{eqnarray*}
\bg_k &=&\underbrace{\beta_{k,k} \bz_k^T(\bs_k-\bx_k) }_{-\beta_{k,k}\gap(\bx_k)} 
+ \underbrace{\sum_{i=0}^{k-1}\beta_{k,i}\bz_k^T(\bs_i-\bx_k)}_{A}\\
A&=& \sum_{i=0}^{k-1}\underbrace{\frac{\beta_{k,i}}{\beta_{k-1,i}}}_{=(1-\frac{c}{(c+i)^p})}\beta_{k-1,i} \bz_k^T(\bs_i-\bx_k)\leq  (1-\frac{c^p}{(c+k)^p}) \underbrace{\bz_k^T(\bar\bs_{k-1}-\bx_k)}_{=B}  \\
B&=&  \bz_k^T(\bar\bs_{k-1}-\underbrace{(\bx_{k-1}+\gamma_{k-1}(\bar\bs_{k-1}-\bx_{k-1}))}_{\bx_{k}}) \\
&=& (1-\gamma_{k-1})\bz_{k}^T(\bar\bs_{k-1}-\bx_{k-1})\\
&=& (1-\gamma_{k-1})(\bz_{k}-\bz_{k-1})^T\underbrace{(\bar\bs_{k-1}-\bx_{k-1})}_{(\bx_k-\bx_{k-1})\gamma_{k-1}^{-1}}
+ (1-\gamma_{k-1})\underbrace{\bz_{k-1}^T(\bar\bs_{k-1}-\bx_{k-1})}_{\bg_{k-1}} \\
&=& \frac{(1-\gamma_{k-1})}{\gamma_{k-1}} \underbrace{(\bz_{k}-\bz_{k-1})^T (\bx_k-\bx_{k-1})}_{\leq L\|\bx_k-\bx_{k-1}\|_2^2}
+ (1-\gamma_{k-1}) \bg_{k-1} \\
&\leq& (1-\gamma_{k-1})\gamma_{k-1}L \underbrace{\|\bar \bs_{k-1}-\bx_{k-1}\|_2^2}_{4D^2}
+ (1-\gamma_{k-1}) \bg_{k-1} \\
\end{eqnarray*}
Overall,
\begin{eqnarray*}
\bg_k &\leq& -\beta_{k,k}\gap(\bx_k) + 4D^2L\gamma_{k-1}(1-\gamma_{k-1})(1-\beta_k)+\underbrace{(1-\beta_k)(1-\gamma_{k-1})}_{\mu_k}\bg_{k-1}\\
&=& -\beta_{k,k}\gap(\bx_k) + 4D^2L \gamma_{k-1}\mu_k + \mu_k\bg_{k-1}\\
&=& -\beta_{k,k}\gap(\bx_k) + 4D^2L  \gamma_{k-1} \mu_k
-\beta_{k-1,k-1}\mu_k\gap(\bx_{k-1}) + 4D^2L \gamma_{k-2}\mu_{k-1}\mu_k + \mu_k\mu_{k-1}\bg_{k-2} 
\\
&=& -\sum_{i=0}^{k-1}\beta_{i,i}\gap(\bx_i)\prod_{j=i+1}^{k}\mu_j - \beta_{k,k}\gap(\bx_k) + 4D^2L\sum_{i=0}^k\gamma_{k-i}\prod_{j=i}^k\mu_j+ \prod_{j=1}^k\mu_k \bg_0
\end{eqnarray*}

Now we compute $\prod_{j=i}^k \mu_j$
\[
\prod_{j=i}^k(1-\gamma_{j-1}) = \prod_{j=i}^k\frac{j-1}{c+j-1} = \prod_{j=0}^c \frac{i-1+j}{k+j} \leq  (\frac{i+c}{k+c})^c
\]
Using $1-\frac{c^p}{(c+k)^p}\leq \exp(-(\frac{c}{c+k})^p)$, 
\begin{eqnarray*}
\log(\prod_{j=i}^k(1-\frac{c^p}{(c+j)^p})) \leq -\sum_{j=i}^k (\frac{c}{c+j})^p \leq  -\int_{i}^k (\frac{c}{c+j})^p dj 
=  \frac{c^p}{p+1} ((c+i)^{p+1}-(c+k)^{p+1})
\end{eqnarray*}
and therefore
\[
\prod_{j=i}^k(1-\frac{c^p}{(c+k)^p}) \leq  \frac{\exp(\frac{c^p}{p+1} (c+i)^{p+1})}{\exp(\frac{c^p}{p+1}(c+k)^{p+1})}
\]
which means
\[
\prod_{j=i}^k\mu_j \leq (\frac{i+c}{k+c})^c \exp(\frac{c^p}{p+1} ((c+i)^{p+1}-(c+k)^{p+1})).
\]
Now we bound the constant term coefficient.
\begin{eqnarray*}
\sum_{i=0}^k \gamma_{i-1}\prod_{j=i}^k\mu_j &\leq&
\sum_{i=0}^k\underbrace{\frac{c}{(c+i-1)}  (\frac{i+c}{k+c})^c}_{\text{max at $i=0$}} \underbrace{\exp(\frac{c^p}{p+1} ((c+i)^{p+1}-(c+k)^{p+1}))}_{\leq \frac{1}{\left(k-i\right)^{\left(c+1\right)\left(1-p\right)}}}\\
&\overset{C-S}{\leq}&\frac{c}{c-1}\frac{c^c}{(k+c)^c}  \sum_{i=0}^{k-1}  \frac{1}{\left(k-i\right)^{(c+1)(1-p)}}\\
&\leq&\frac{c}{c-1}\frac{c^c}{(k+c)^c}  \frac{1}{(c+1)(1-p)-1} \frac{1}{(1 - k)^{ (c+1)(1-p)-1}}\\
&\leq& \frac{C_1}{(k+c)^p}\frac{1}{(1 - k)^{ (c+1)(1-p)-1}}
\end{eqnarray*}
where $(*)$ if $c$ is chosen such that $(c+1)(1-p) > 1$ and $C_1>0$ big enough. Note that necessarily, $p < 1$, and the size of $C_1$ depends on how close $p$ is to 1.

Also, to simplify terms,
\[
\prod_{j=i}^k\mu_j \leq (\frac{i+c}{k+c})^c \underbrace{\exp(\frac{c^p}{p+1} ((c+i)^{p+1}-(c+k)^{p+1})).}_{\leq 1}
\]

Now, we can say
\[
\bg_k \leq -\sum_{i=0}^{k-1} \beta_{i,i} (\frac{i+1+c}{k+c})^c \gap(\bx_i)   - \beta_{k,k}\gap(\bx_k)+  \frac{4D^2L C_1}{(k+c)^p} + (\frac{c}{k+c})^c \bg_0.
\]

\end{proof}

\begin{theorem}[Global rate, $p < 1$.]
\label{th:globalrate}
Suppose $0 < p < 1$ and 
$c \geq \frac{c-1}{c^p}$. Then
 $h(\bx_k) =: \bh_k = O(\tfrac{1}{(k+c)^p})$.
\end{theorem}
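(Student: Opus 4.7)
The plan is to combine $L$-smoothness of $f$ with the upper bound on $\bg_k$ from Lemma \ref{lem:discrete_energy} and close an induction on $\bh_k \leq C/(c+k)^p$. Applying $L$-smoothness to the update $\bx_{k+1} = \bx_k + \gamma_k(\bar\bs_k - \bx_k)$ and using $\|\bar\bs_k - \bx_k\|_2 \leq 2D$ gives
\[
\bh_{k+1} \leq \bh_k + \gamma_k\bg_k + 2LD^2\gamma_k^2.
\]
Substituting the bound from Lemma \ref{lem:discrete_energy} and using convexity $\gap(\bx_j)\geq \bh_j\geq 0$, the resulting terms split into ``noise'' and ``decay'' pieces. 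Since $p<1$ (so $p+1<2$) and $c\geq 1$ (so $c+1>p+1$), each noise term---$\gamma_k\cdot\tfrac{4D^2LC_1}{(c+k)^p}$, $\gamma_k(\tfrac{c}{c+k})^c|\bg_0|$, and $2LD^2\gamma_k^2$---is $O(1/(c+k)^{p+1})$.

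The decay comes from $-\gamma_k\sum_{i=0}^{k-1}\beta_{i,i}(\tfrac{c+i+1}{c+k})^c\gap(\bx_i)-\gamma_k\beta_{k,k}\gap(\bx_k)$. A direct calculation shows the weights $\sum_{i<k}\beta_{i,i}(\tfrac{c+i+1}{c+k})^c$ are of order $(c+k)^{1-p}$, so after multiplication by $\gamma_k=c/(c+k)$ the effective decay coefficient is of order $1/(c+k)^p$, matching the target rate. Combined with an almost-monotonicity bound on $\bh_i$ (afforded by the one-step estimate, which only allows increases of size $O(1/(c+k)^2)$), this produces a recursion of the form
\[
\bh_{k+1} \leq \bh_k\Bigl(1-\tfrac{\Theta(1)}{(c+k)^p}\Bigr) + \tfrac{O(1)}{(c+k)^{p+1}}.
\]
I then induct on $\bh_k\leq C/(c+k)^p$, verifying the step via the convexity inequality $(c+k+1)^{-p}\geq (c+k)^{-p}-p(c+k)^{-p-1}$. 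The step reduces to an algebraic comparison of the form $(M+Cp)(c+k)^{p-1}\leq Cc_1$, which, since $p<1$, is worst at $k=0$ and holds for $C\geq\max\{\bh_0 c^p,\ Mc^{p-1}/(c_1-pc^{p-1})\}$. The hypothesis $c\geq (c-1)/c^p$, i.e.\ $c^{p+1}\geq c-1$ (essentially $c\geq 1$), is the mild regularity that makes the comparison close uniformly in $k$.

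The main obstacle is justifying the key estimate on the past-gap sum. If one drops the sum and keeps only the current-step term $-\gamma_k\beta_{k,k}\bh_k=-c^{p+1}\bh_k/(c+k)^{p+1}$, the decay is $O(1/(c+k)^{2p+1})$, asymptotically smaller than the $O(1/(c+k)^{p+1})$ noise; moreover $\sum_k c^{p+1}/(c+k)^{p+1}<\infty$ for $p>0$, so the resulting naive recursion only gives boundedness of $\bh_k$, not decay. One must therefore use the past-gap sum nontrivially, either by establishing a sharp enough near-monotonicity of $\bh_i$ so that $\gap(\bx_i)\geq \bh_i$ can be chained into a lower bound proportional to $\bh_k$, or by carrying forward a Lyapunov-style accumulator $\sum_{i<k}\alpha_i\bh_i$ through the induction. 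Tracking constants tightly enough that the condition $c\geq (c-1)/c^p$ emerges as the precise threshold is the final delicate piece of bookkeeping.
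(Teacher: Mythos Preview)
Your skeleton—bound $\bg_k$ via Lemma~\ref{lem:discrete_energy}, combine with $L$-smoothness to get $\bh_{k+1}\le\bh_k+\gamma_k\bg_k+2LD^2\gamma_k^2$, collect the noise terms at order $O((c+k)^{-p-1})$, and induct on $\bh_k\le C_2/(c+k)^p$—matches the paper's structure exactly, and you have correctly located the real difficulty: the only usable decay comes from the past-gap sum $\sum_{i<k}\beta_{i,i}\bigl(\tfrac{c+i+1}{c+k}\bigr)^c\gap(\bx_i)$, and converting that into a multiple of $\bh_k$ requires \emph{lower} bounds on the $\bh_i$, whereas the induction hypothesis supplies only upper bounds. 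One correction to your sketch: the almost-monotonicity claim is too optimistic. The positive part of $\bg_k$ in the lemma is $O((c+k)^{-p})$, so one-step increases of $\bh$ are $O((c+k)^{-p-1})$, not $O((c+k)^{-2})$; summing from $i$ to $k$ gives only $\bh_k-\bh_i\le O((c+i)^{-p})$, which is the same order as the target $\bh_k$ itself, so this route does not obviously deliver $\bh_i\gtrsim\bh_k$ for enough indices $i$.

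The paper's own proof does not close this gap. It simply substitutes the induction hypothesis $\bh_i\le C_2/(c+i)^p$ into the negative decay sum, replacing $(c+i)^{c-p}\bh_i$ by $C_2(c+i)^{c-2p}$ and then bounding $\sum_i (c+i)^{c-2p}$ from below. But since the sum enters with a minus sign, this substitution points the wrong way: an upper bound on $\bh_i$ yields a \emph{lower} bound on $-\sum(\cdots)\bh_i$, hence a lower bound on the right-hand side, not the needed upper bound. As written, the paper's step is an ansatz-consistency check (plug in the candidate rate and verify the recursion balances) rather than a valid induction, so the obstacle you flagged is genuine and is not rigorously resolved in the paper either.
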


\begin{proof}Start with
\[
\bg_k \leq -\sum_{i=0}^{k-1} \beta_{i,i} (\frac{i+1+c}{k+c})^c \gap(\bx_i)   - \beta_{k,k}\gap(\bx_k)+  \frac{4D^2L C_1}{(k+c)^p} + (\frac{c}{k+c})^c \bg_0.
\]
\begin{eqnarray*}
\bh_{k+1}-\bh_k &\leq& \gamma_k\bg_k + 2\gamma_k LD^2\\
&\leq & -\frac{c}{c+k}\frac{1}{(k+c)^c}\sum_{i=0}^{k-1} \beta_{i,i} (i+c+1)^c  \bh_i - \frac{c}{c+k}\beta_{k,k}\bh_k\\
 &&\qquad + 2D^2L\gamma_k( \frac{2C_1}{(k+c)^p} + \gamma_k) + \frac{c}{c+k}\frac{c^c}{(k+c)^c} \bg_0 \\
 &\leq&  -\frac{c^{p+1}}{c+k}\frac{1}{(k+c)^c}\sum_{i=0}^{k-1} (i+c)^{c-p}  \bh_i - \frac{c}{c+k}\beta_{k,k}\bh_k\\
 &&\qquad + 2D^2L\gamma_k( \frac{2C_1}{(k+c)^p} + \gamma_k) + \frac{c}{c+k}\frac{c^c}{(k+c)^c} \bg_0 \\
\end{eqnarray*}

Suppose $\bh_{k} \leq \frac{C_2}{(k+c)^p}$. Then
\begin{eqnarray*}
\bh_{k+1}  -\bh_k &\leq&   \underbrace{-\frac{c^{p+1}}{c+k}\frac{C_2}{(k+c)^c}\sum_{i=0}^{k-1} (c+i)^{c-2p}    - \frac{cC_2}{c+k}\frac{c^p}{(c+k)^{2p}}}_{-AC_2}\\
 &&\qquad + \underbrace{2D^2L\frac{c}{c+k}\left( \frac{2C_1}{(k+c)^p} + \frac{c}{c+k}\right) + \frac{c}{c+k}\frac{c^c}{(k+c)^c} \bg_0}_{B} \\
 B&=& \frac{c}{c+k}\left(2D^2L\left( \frac{2C_1}{(k+c)^p} + \frac{c}{c+k}\right) + \frac{c^c}{(k+c)^c} \bg_0\right)
 \\
 &\leq &\frac{2cD^2L( 2C_1+c ) + c^{c+1} \bg_0}{(c+k)^{p}(c+k)}\\
 &=:&\frac{C_3}{(c+k)^{p+1}}\\
\end{eqnarray*}
where $c > 1$.  Then,

\begin{eqnarray*}
 (k+c)^cA &=&  \frac{c^{p+1}}{c+k}\sum_{i=0}^{k-1} (c+i)^{c-2p}    + \frac{c}{c+k}\frac{c^p}{(c+k)^{2p-1}}\\
 &\geq & \frac{c^{p+1}}{c+k} \frac{(c+k-1)^{c-2p+1}-(c+1)^{c-2p+1}}{c-2p+1} + \frac{c}{c+k} \frac{c^p}{(c+k)^{2p-1}}\\
 &= & \frac{c^{p+1}}{c-2p+1}\frac{(c+k-1)^{c-2p+1}}{c+k} + O(1/k)\\ 
 &\overset{c\geq 2p+1}{\geq} & \frac{c^{p+1}}{c-2p+1} + O(1/k)\\
 &\overset{\text{$k$ big enough}}{\geq}& \frac{c^{p+1}}{c-1}
 \end{eqnarray*}

Therefore,
\begin{eqnarray*}
\bh_{k+1}    &\leq&  \frac{C_2(1-\frac{c^{p+1}}{c-1})}{(k+c)^p} + \frac{C_3}{(c+k)^{p+1}}.
\end{eqnarray*}
Define 
$\epsilon = \frac{2c^{p+1}}{c^{p+1} + 1 - c}$
and pick $C_2 > \frac{C_3}{c\epsilon}$. By assumption, $\epsilon > 0$. Consider $k > K$ such that for all $k$, 
\[
\frac{(k+c+1)^p}{(k+c)^p} \leq 1+\frac{\epsilon}{2}, \qquad \frac{C_3}{c+k} \leq \frac{\epsilon}{2}\frac{(c+k)^p}{(c+k+1)^p}. 
\]
Then
\begin{eqnarray*}
\bh_{k+1}    &\leq&  \frac{C_2(1-\tfrac{\epsilon}{2})}{(k+c+1)^p} + \frac{\tfrac{\epsilon}{2}}{(k+c+1)^p} \leq \frac{C_2}{(k+c+1)^p}.
\end{eqnarray*}
We have now proved the inductive step.
Picking $C_2 \geq \bh_0$ gives the starting condition, completing the proof.

\end{proof}

\subsection{Local rates}
\label{app:sec:local}
\begin{lemma}[Local convergence]
Define, for all  $t$,
\begin{equation}
\tilde s(t) = \argmin{\tilde s\in \conv(\mS(\bx^*))}\|s(t)-\tilde s\|_2, \qquad \hat s(t) = \bar \beta_t^{-1}\int_{0}^t \beta_{t,\tau} \tilde s(\tau) d\tau.
\label{eq:def-shat}
\end{equation}
e.g., $\hat s(t)$ is the closest point in the convex hull of the support of $\bx^*$ to the the point $s(t) = \lmo_\mD(x(t))$.

Then, 
\[
\|\bar s(t) - \hat s(t)\|_2 \leq \frac{c_1}{(c+t)^c}.
\]
\end{lemma}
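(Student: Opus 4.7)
The plan is to exploit the explicit integral representation of $\bar s(t)$ given by the accumulation lemma in Appendix \ref{app:sec:accumulation}, namely $\bar s(t) = \int_0^t \beta_{t,\tau}\, s(\tau)\, d\tau$, together with the convex-combination form $\hat s(t) = \bar\beta_t^{-1}\int_0^t \beta_{t,\tau}\, \tilde s(\tau)\, d\tau$ from \eqref{eq:def-shat}. Adding and subtracting $\tilde s(\tau)$ inside the integral yields the decomposition
\[
\bar s(t) - \hat s(t) \;=\; \underbrace{\int_0^t \beta_{t,\tau}\,(s(\tau)-\tilde s(\tau))\, d\tau}_{I_1} \;+\; \underbrace{(1-\bar\beta_t^{-1})\int_0^t \beta_{t,\tau}\, \tilde s(\tau)\, d\tau}_{I_2},
\]
after which the proof reduces to showing that each piece is of order $(c+t)^{-c}$.

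For $I_2$, I would observe that $\int_0^t \beta_{t,\tau}\tilde s(\tau)\, d\tau = \bar\beta_t\,\hat s(t)$, so $I_2 = (\bar\beta_t - 1)\hat s(t)$. Since $\hat s(t)$ is a true convex combination of atoms and therefore has norm bounded by $D$, and since the accumulation formula gives $1-\bar\beta_t = (c/(c+t))^c$ when $p=1$ (and $1-\bar\beta_t = e^{\alpha(0)-\alpha(t)}$, which decays faster than any polynomial, when $p<1$), one immediately obtains $\|I_2\|_2 = O((c+t)^{-c})$.

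For $I_1$, I would invoke manifold identification (the standing hypothesis of this local-convergence section): there exists a finite $t_0$ such that for all $\tau\geq t_0$, $s(\tau)\in \mS(\bx^*)\subseteq \conv(\mS(\bx^*))$, whence $\tilde s(\tau) = s(\tau)$ and the integrand vanishes on $[t_0,t]$. Hence $I_1 = \int_0^{t_0} \beta_{t,\tau}\,(s(\tau)-\tilde s(\tau))\,d\tau$, and combining $\|s(\tau)-\tilde s(\tau)\|_2\leq 2D$ with the direct evaluation
\[
\int_0^{t_0}\beta_{t,\tau}\,d\tau \;=\; \frac{(c+t_0)^c - c^c}{(c+t)^c}\ \text{for } p=1, \qquad \text{or}\qquad e^{\alpha(t_0)-\alpha(t)}-e^{\alpha(0)-\alpha(t)}\ \text{for } p<1,
\]
yields $\|I_1\|_2 = O((c+t)^{-c})$ as well.

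Putting the two estimates together gives the claim, with $c_1$ absorbing $D$, $c$, and the finite identification time $t_0$. The main obstacle is not the estimates themselves but the conceptual bookkeeping of the decomposition: one must notice that two distinct mechanisms contribute to $\bar s - \hat s$ at the \emph{same} asymptotic rate $(c+t)^{-c}$, namely (i) the early-time interval on which $s(\tau)$ may disagree with the active-face projection $\tilde s(\tau)$, and (ii) the ``shortfall'' of $\bar s(t)$ from being a bona fide convex combination (quantified by $1-\bar\beta_t$). The algebraic identity $\bar s - \hat s = I_1 + I_2$ is what cleanly separates these two sources of error and makes each amenable to the accumulation-formula bounds.
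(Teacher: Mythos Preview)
Your argument is correct and arrives at the same bound as the paper, but the decomposition you use is not quite the one the paper chooses. The paper keeps the single integrand $s(\tau)-\bar\beta_t^{-1}\tilde s(\tau)$ and splits the domain of integration at the identification time $\bar t$, obtaining $\epsilon_1$ (the $[0,\bar t]$ piece, bounded via $\int_0^{\bar t}\beta_{t,\tau}\,d\tau = O((c+t)^{-c})$) and $\epsilon_2$ (the $[\bar t,t]$ piece, where $s=\tilde s$ reduces the integrand to $(1-\bar\beta_t^{-1})s(\tau)$, and then $|1-\bar\beta_t^{-1}|=O((c+t)^{-c})$ finishes it). You instead add and subtract $\tilde s(\tau)$ under the full integral, which cleanly separates the two error mechanisms \emph{before} any time-splitting: $I_1$ isolates the pre-identification mismatch $s-\tilde s$ (so the late-time contribution vanishes outright), and $I_2=(\bar\beta_t-1)\hat s(t)$ isolates the normalization defect without any residual integral to estimate. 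Your route is a bit more transparent---it avoids the somewhat awkward ``Cauchy--Schwarz for integrals'' step the paper uses on $\epsilon_1$ and yields the constant directly---while the paper's time-splitting makes the role of the identification time $\bar t$ slightly more visible in the final constant. Both rely on exactly the same two ingredients (finite identification time, and $1-\bar\beta_t=(c/(c+t))^c$), so the difference is purely organizational.
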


The proof of this theorem actually does not really depend on how well the FW method works inherently, but rather is a consequence of the averaging. Intuitively, the proof states that after the manifold has been identified, all new support components must also be in the convex hull of the support set of the \emph{optimal} solution; thus, in fact $s_2(t) - \hat s(t) = 0$. However, because the accumulation term in the flow actually is not a true average until $t\to +\infty$, there is a pesky normalization term which must be accounted for. Note, importantly, that this normalization term does not appear in the method, where the accumulation weights always equal 1 (pure averaging).

\begin{proof}
First, note that 
\[
\bar s(t) - \hat s(t) =\bar s(t) - \bar \beta_t^{-1} \int_0^t \beta_{t,\tau} \tilde s(\tau) d\tau = \int_0^t \beta_{t,\tau} (s(\tau) - \bar\beta_t^{-1} \tilde s(\tau)) d\tau.
\]
Using triangle inequality,
\[
\|\bar s(t) - \hat s(t)\|_2 \leq \underbrace{ \|\int_0^{\bar t} \beta_{t,\tau} (s(\tau) - \bar\beta_t^{-1} \tilde s(\tau)) d\tau\|_2}_{\epsilon_1} + \underbrace{\|\int_{\bar t}^t \beta_{t,\tau} (s(\tau) - \bar\beta_t^{-1} \tilde s(\tau)) d\tau\|_2}_{\epsilon_2}.
\]

Expanding the first term, 
via Cauchy Scwhartz for integrals, we can write, elementwise,
\[
\int_0^{\bar t} \beta_{t,\tau} (s(\tau)_i - \bar\beta_t^{-1} \tilde s(\tau)_i) d\tau   \leq \int_0^{\bar t} \beta_{t,\tau}  d\tau \int_0^{\bar t} |s(\tau)_i - \bar\beta_{ t}^{-1} \tilde s(\tau)_i| d\tau
\]
and thus, 
\[
\|\int_0^{ t} \beta_{t,\tau} (s(\tau) - \bar\beta_t^{-1} \tilde s(\tau)) d\tau\|   \leq \int_0^{\bar t} \beta_{t,\tau}   d\tau \underbrace{\|\int_0^{ t} s(\tau)_i - \bar\beta_t^{-1} \tilde s(\tau)_i d\tau\|_2}_{\leq 2 D (1+\bar \beta_{t}^{-1}) \bar t} .
\]
and moreover,
\[
\int_0^{\bar t} \beta_{t,\tau} d\tau = \int_0^{\bar t} \frac{c(c+\tau)^{c-1}}{(c+t)^c} d\tau = \frac{\hat c_0}{(c+t)^c}
\]
since $\int_0^{\bar t} b(b+\tau)^{b-1}  d\tau$ does not depend on $t$.
Thus the first error term 
\[
\epsilon_1 \leq \frac{2\hat c_0 D \bar t (1+\bar \beta_t^{-1}) }{(c+t)^c} \leq \frac{c_0}{(c+t)^c}
\]
where
\[
\hat c_0 :=  2D\bar t \int_0^{\bar t} c(c+\tau)^{c-1} d\tau.
\]

In the second error term, \emph{because the manifold has now been identified}, $s(\tau)  = \tilde s(\tau)$, and so 
\[
\int_{\bar t}^t \beta_{t,\tau} (s(\tau) - \bar\beta_t^{-1} \tilde s(\tau)) d\tau   
=
\int_{\bar t}^t \beta_{t,\tau}(1-\bar \beta_t^{-1}) s(\tau)  d\tau   
\]
and using the same Cauchy-Schwartz argument, 
\[
\int_{\bar t}^t \beta_{t,\tau}(1-\bar \beta_t^{-1}) s(\tau)  d\tau \leq D\int_{\bar t}^t \beta_{t,\tau}(1-\bar \beta_t^{-1})    d\tau.
\]
The term
\[
1-\bar \beta_t^{-1} = |1-\frac{1}{1-(\frac{c}{c+t})^c}| =\frac{c^c}{(c+t)^c-c^c}\leq \frac{2c^2 }{(c+t)^{c}}
\] 
and thus
\[
\epsilon_t \leq \int_{\bar t}^t \beta_{t,\tau}(1-\bar\beta_t^{-1}) d\tau \leq \frac{2 c^3}{(c+t)^{2c}}\int_{\bar t}^t (c+\tau)^{c-1}  = 
\frac{2c^2 }{(c+t)^{2c}}( (c+t)^c - (c+\bar t)^c)  \leq \frac{2c^2}{(c+t)^c}.
\]
Thus,
\[
\|\bar s(t)-\hat s(t)\|_2  \leq \frac{\hat c_0+2c^2}{(c+t)^c} = O(\frac{1}{(c+t)^c}).
\]

\end{proof}

\begin{corollary}[Local flow rate]
Suppose that for all $x\in \mD$, $\|\nabla f(x)\|_2 \leq G$ for some $G$ large enough. 
Consider $\gamma(t) = \beta(t) = \frac{c}{c+t}$. Then the ODE
\[
\dot h(x(t)) = \gamma(t)\nabla f(x)^T(\bar s - x)
\]
has solutions 
  $h(t) = O(\frac{\log(t)}{(c+t)^c})$
   when $t \geq \bar t$.

\end{corollary}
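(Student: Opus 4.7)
The plan is to reduce the analysis to a linear differential inequality of the form
\[
\dot h(t) \leq \gamma(t)\bigl(-h(t) + \epsilon(t)\bigr), \qquad \epsilon(t) = O\bigl((c+t)^{-c}\bigr),
\]
and then integrate it against the factor $((c+t)/c)^c = \exp\bigl(\int_0^t \gamma(\tau)\,d\tau\bigr)$. The key observation is that when $\gamma(t)\epsilon(t)\cdot((c+t)/c)^c$ simplifies to a constant multiple of $1/(c+t)$, integration yields a $\log(t)$, which is precisely the logarithm appearing in the claimed bound after one divides back by the integrating factor.

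To build the differential inequality, I would start from $\dot h = \gamma(t)\,\nabla f(x)^T(\bar s - x)$ and insert $\pm\hat s(t)$ and $\pm x^*$, splitting the inner product into three pieces. The piece involving $\bar s - \hat s$ is handled immediately by Cauchy--Schwarz, the assumption $\|\nabla f(x)\|_2 \leq G$, and the preceding lemma, contributing at most $Gc_1/(c+t)^c$. The piece $\nabla f(x)^T(x^* - x)$ is at most $-h(x) - \tfrac{\mu}{2}\|x-x^*\|^2$ by strong convexity (implicit here, as in the adjacent theorem). The cross piece $\nabla f(x)^T(\hat s - x^*)$ is treated via the structural fact that $x^* \in \mS(x^*)$ by first-order optimality, so both $\hat s$ and $x^*$ lie in $\conv(\mS(x^*))$, on which $\nabla f(x^*)$ is constant; hence $\nabla f(x^*)^T(\hat s - x^*) = 0$, and the piece collapses to $(\nabla f(x) - \nabla f(x^*))^T(\hat s - x^*) \leq L\|x-x^*\|\,\|\hat s - x^*\|$. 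Young's inequality on this product lets me absorb the $\|x-x^*\|^2$ contribution into the $-\tfrac{\mu}{2}\|x-x^*\|^2$ arising from the $x^*-x$ piece, leaving a residual $\tfrac{L^2}{2\mu}\|\hat s - x^*\|^2$.

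The main obstacle is bounding $\|\hat s(t) - x^*\|^2$ by $O((c+t)^{-c})$. My plan is a bootstrap: start from the global rate $h = O((c+t)^{-(1-p)})$ established earlier, which via strong convexity yields $\|x - x^*\| = O((c+t)^{-(1-p)/2})$; then use $\bar s - x = \dot x/\gamma$ together with the lemma's bound $\|\bar s - \hat s\| = O((c+t)^{-c})$ to obtain a preliminary estimate on $\|\hat s - x^*\|$. Reinjecting this estimate into the inequality above sharpens the decay of $h$, and the argument is iterated until the exponent saturates at $c$. Manifold identification is essential at this step: after $\bar t$ every new LMO lies in $\mS(x^*)$, which is what prevents $\hat s$ from drifting away from the optimal face and makes the averaged direction asymptotically point toward $x^*$ at the required rate.

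Once $\dot h \leq \gamma(t)\bigl(-h + C/(c+t)^c\bigr)$ is in hand, the final computation is mechanical. Multiplying through by $((c+t)/c)^c$ gives
\[
\frac{d}{dt}\Bigl[\bigl((c+t)/c\bigr)^c h(t)\Bigr] \leq \bigl((c+t)/c\bigr)^c \gamma(t)\,\frac{C}{(c+t)^c} = \frac{c^{1-c}C}{c+t},
\]
and integrating from $\bar t$ to $t$ produces a $\log((c+t)/(c+\bar t))$ factor. Dividing back by $((c+t)/c)^c$ delivers $h(t) = O\bigl(\log(t)/(c+t)^c\bigr)$, matching the claim.
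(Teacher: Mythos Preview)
Your final integrating-factor step is exactly what the paper does, but the middle of your argument takes an unnecessary and problematic detour.

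The paper splits into only \emph{two} pieces, $\bar s - x = (\bar s - \hat s) + (\hat s - x)$, and the key observation is that after manifold identification the paper asserts $\mS(x(t)) = \mS(x^*)$, so every point of $\conv(\mS(x^*))$ is itself an LMO at $x(t)$. Since $\hat s(t)\in\conv(\mS(x^*))$ by construction, this gives
\[
\nabla f(x)^T(\hat s - x) \;=\; \nabla f(x)^T(s(t) - x) \;=\; -\gap(x(t)) \;\leq\; -h(t)
\]
immediately, with no appeal to strong convexity, no Young's inequality, and no need to control $\|\hat s - x^*\|$. The differential inequality $\dot h \leq \gamma(t)\bigl(Gc_1/(c+t)^c - h\bigr)$ then follows at once, and your integration finishes it.

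Your three-way split trades this one-line observation for the new obligation $\|\hat s(t)-x^*\|^2 = O((c+t)^{-c})$, and the bootstrap you sketch for it does not close. Tracing your own plan: from $h = O(t^{-q})$ and strong convexity you get $\|x-x^*\|^2 = O(t^{-q})$; the averaging corollary (with $p=1$) gives $\|\bar s - x\|^2 = O(t^{-q/2})$; hence by triangle inequality $\|\hat s - x^*\|^2 = O(t^{-q/2})$. Plugging this residual into your inequality yields $\dot h \leq \gamma(-h + C t^{-q/2})$, which after integrating gives only $h = O(t^{-q/2})$\,---\,\emph{worse} than the $O(t^{-q})$ you started from. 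The iteration moves in the wrong direction and never saturates at $c$. More fundamentally, there is no reason $\hat s(t)$ should converge to $x^*$ at any prescribed rate; it merely lives on the optimal face. The paper's insight is precisely that you never need $\hat s$ close to $x^*$: it suffices that $\hat s$ is an LMO at the current iterate.
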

\begin{proof}
First, we rewrite the ODE in a more familiar way, with an extra error term
\[
\dot h(x(t)) = \gamma(t)\nabla f(x)^T(\bar s - \hat s) +  \gamma(t)\nabla f(x)^T(\hat s - x)
\]
where $\hat s$ is as defined in \eqref{eq:def-shat}. By construction, $\hat s$ is a convex combination of $\tilde s\in \mS(\bx^*)$. Moreover, after $t \geq \bar t$, $\mS(\bar x(t)) = \mS(\bx^*)$, and thus 
\[
\nabla f(x)^T(\hat s(t) - x) = \nabla f(x)^T(s(t) - x) = -\gap(t) \leq -h(t).
\]
Then, using Cauchy-Schwartz, and piecing it together,
\[
h(t) = \nabla f(x)^T(\hat s(t) - x) \leq G\gamma(t)\|\bar s - \hat s\|_2 - \gamma(t) h(t) \leq \frac{G\gamma(t)  c_1}{(c+t)^c} - \gamma(t) h(t).
\]

 Let us therefore consider the system
  \[
  \dot h(x(t)) = \frac{2 G D \gamma(t) }{(c+t)^c}   - \gamma(t) h(x(t)) .
\]
The solution to this ODE is
 \[
 h(t) = \frac{h(0)c^c + 2GDc \log(c+t) - 2GDc\log(c)}{(c + t)^c} = O(\frac{\log(t)}{(c+t)^c}).
 \]

\end{proof}

\begin{lemma}[Local  averaging error]
\label{lem:local_avgerror}
Define, for all  $k$,
\[
\tilde \bs_k = \argmin{\tilde \bs\in \conv(\mS(\bx^*))}\|\bs_k-\tilde \bs\|_2, \qquad \hat \bs_k = \bar \beta_k^{-1}\sum_{i=1}^k \beta_{k,i} \tilde \bs_i.
\]
e.g., $\tilde \bs(k)$ is the closest point in the convex hull of the support of $\bx^*$ to the the point $\bs_k = \lmo_\mD(\bx_k)$.

Then, 
\[
\|\bar \bs_k - \hat \bs_k\|_2 \leq \frac{c_2}{k^c}.
\]
\end{lemma}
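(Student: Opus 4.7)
The plan is to mirror the continuous-time argument from the previous lemma, splitting the error $\bar \bs_k - \hat \bs_k$ into contributions from before and after the manifold identification index $\bar k$. Writing
\[
\bar \bs_k - \hat \bs_k = \sum_{i=1}^k \beta_{k,i}\bigl(\bs_i - \bar\beta_k^{-1}\tilde \bs_i\bigr),
\]
I would bound the sum over $i \le \bar k$ using only the crude estimate $\|\bs_i\|_2, \|\tilde \bs_i\|_2 \le D$, yielding a contribution of at most $2D(1+\bar\beta_k^{-1})\sum_{i=1}^{\bar k}\beta_{k,i}$. For $i > \bar k$, manifold identification forces $\bs_i \in \conv(\mS(\bx^*))$ and hence $\tilde \bs_i = \bs_i$, so each summand collapses to $\beta_{k,i}(1 - \bar\beta_k^{-1})\bs_i$, with total norm at most $D\,|1-\bar\beta_k^{-1}|\sum_{i > \bar k}\beta_{k,i}$.

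The common quantitative step is a decay estimate on the weights. From the ratio identity $\beta_{k,i} = (1-\beta_k)\beta_{k-1,i}$ (valid for $i \le k-1$, read off the explicit formula in the earlier averaging lemma), iterating gives $\beta_{k,i} = \beta_i \prod_{j=i+1}^k (1-\beta_j)$. For $p=1$ this product telescopes cleanly, since $\prod_{j=i+1}^k j/(c+j) = \Gamma(k+1)\Gamma(c+i+1)/(\Gamma(i+1)\Gamma(c+k+1))$, which is $O(1/k^c)$ for fixed $i$. Summing over $i = 1, \ldots, \bar k$ yields $\sum_{i \le \bar k} \beta_{k,i} = O(1/k^c)$ with a constant depending only on $\bar k$ and $c$. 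For $p<1$ the product decays even faster, essentially like $\exp(-c^p k^{1-p}/(1-p))$, so the same polynomial bound still applies.

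The very same product identity controls the normalization discrepancy. Setting $S_k := \sum_{i=1}^k \beta_{k,i}$, the recursion $S_k = \beta_k + (1-\beta_k)S_{k-1}$ telescopes to $1 - S_k = \prod_{j=1}^k (1-\beta_j) = O(1/k^c)$. With $\bar\beta_k = S_k \to 1$, this gives $|1 - \bar\beta_k^{-1}| = (1-\bar\beta_k)/\bar\beta_k = O(1/k^c)$ for $k$ large enough that $\bar\beta_k \ge 1/2$. Combining the pre-identification and tail contributions yields $\|\bar\bs_k - \hat\bs_k\|_2 \le c_2/k^c$.

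The main obstacle will be bookkeeping the constants cleanly: the $i\le\bar k$ tail picks up a $\bar k$-dependent constant, and the $i > \bar k$ tail picks up a factor from the normalization; both must be absorbed into $c_2$ while also covering the early regime of small $k$ where $\bar\beta_k$ is not yet close to $1$. Because $\bar k$ is finite and problem-dependent, all these constants can be dominated uniformly by enlarging $c_2$. The discrete proof is actually slightly simpler than its continuous analog, since the product structure of $\beta_{k,i}$ substitutes directly for the more delicate Cauchy--Schwarz-for-integrals step used in the continuous case.
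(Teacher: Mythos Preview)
Your approach is correct and reaches the same conclusion, but it does more work than necessary because you overlook a simplification that the paper exploits. The earlier averaging lemma in the paper establishes that the discrete weights satisfy $\sum_{i=1}^k \beta_{k,i} = 1$ \emph{exactly}, so $\bar\beta_k = 1$ and $\bar\beta_k^{-1} = 1$. Consequently, the paper writes simply $\bar\bs_k - \hat\bs_k = \sum_{i=1}^k \beta_{k,i}(\bs_i - \tilde\bs_i)$, and the post-identification portion $i > \bar k$ vanishes identically since $\tilde\bs_i = \bs_i$ there. The entire proof reduces to bounding $\sum_{i=1}^{\bar k}\beta_{k,i}$, which is exactly the $O(1/k^c)$ estimate you already derived from the telescoping product.

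Your route---treating $\bar\beta_k$ as only asymptotically close to $1$ and separately bounding the normalization discrepancy $|1-\bar\beta_k^{-1}|$---is a faithful transcription of the continuous-time argument, where the accumulation $\bar\beta_t = 1 - (c/(c+t))^c$ really is strictly less than $1$. In the discrete setting that extra error term is zero, so your two-term bookkeeping and the constant-management discussion in your last paragraph are unnecessary. What your approach buys is robustness: it would still work if the weights were defined so that they did not sum to exactly $1$ (say, a different initialization of $\bar\bs_0$). What the paper's approach buys is brevity: a three-line proof instead of a two-part error decomposition.
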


\begin{proof}
First, note that 
\[
\bar \bs_k - \hat \bs_k =\bar \bs_k -  \sum_{i=1}^k \beta_{k,i} \tilde \bs_i = \sum_{i=1}^k  \beta_{k,i} (\bs_i -  \tilde \bs_i).
\]
Using triangle inequality,
\[
\|\bar \bs_k - \hat \bs_k\|_2 \leq \underbrace{ \|\sum_{i=1}^{\bar k} \beta_{k,i} (\bs_i -  \tilde \bs_i)) \|_2}_{\epsilon} + \underbrace{\|\sum_{i=\bar k}^k \beta_{k,i} (\bs_i -  \tilde \bs_i) \|_2}_{0}.
\]
where the second error term is 0 since the manifold has been identified, so $\tilde \bs_i = \bs_i$ for all $i \geq \bar k$.

Expanding the first term,  using a Holder norm (1 and $\infty$ norm) argument,
\begin{eqnarray*}
\|\sum_{i=1}^{\bar k} \beta_{k,i} (\bs_i -  \tilde \bs_i)  \|_2     &\leq& 2D\sum_{i=1}^{\bar k} \beta_{k,i} \\
&=&  2D\sum_{i=1}^{\bar k} \frac{c}{(c+i)}\prod_{j=0}^{ k - i - 1}(1-\frac{c}{(c+k-j)}) \\
&=& 2D\sum_{i=1}^{\bar k} \frac{c}{(c+i)} \prod_{j=0}^c \frac{i-1+j}{c+k-j}\\
&\leq& 2D (\frac{\bar k - 1 + c}{k})^c\sum_{i=1}^{\bar k} \frac{c}{(c+i)}= O(1/k^c).
\end{eqnarray*}
\end{proof}

\begin{corollary}[Local convergence rate bounds]
\label{lem:local_convratebnd}
Suppose that for all $\bx\in \mD$, $\|\nabla f(\bx)\|_2 \leq G$ for some $G$ large enough. Define also $r$ the decay constant of $\|\bar\bs_k-\bx_k\|^2_2$ ($=O(1/k^r)$).   
Consider $\gamma_k = \beta_k = \frac{c}{c+k}$. Then the difference equation
\[
\bh(\bx_{k+1})-\bh(\bx_k) \leq \gamma_k\nabla f(\bx)^T(\bar \bs_k - \bx_k) + \frac{C}{k^r}
\]
is satisfied with candidate solution 
 $\bh(\bx_k) = C_4 \max\{\frac{\log(k)}{(c+k)^c},\frac{1}{k^{r+1}}\}$
   when $k \geq \bar k$.

\end{corollary}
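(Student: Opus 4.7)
The plan is to feed the output of Lemma~\ref{lem:local_avgerror} into the given per-step inequality, mirror the continuous-time argument used in the preceding ``local flow rate'' corollary, and then close an induction on $\bh_k$ against the candidate rate. Concretely, I would introduce the proxy averaged atom $\hat\bs_k\in\conv(\mS(\bx^*))$ from Lemma~\ref{lem:local_avgerror} and split
\[
\nabla f(\bx_k)^T(\bar\bs_k-\bx_k) = \nabla f(\bx_k)^T(\bar\bs_k-\hat\bs_k) + \nabla f(\bx_k)^T(\hat\bs_k-\bx_k).
\]
The first summand is controlled by Cauchy-Schwarz together with the bounded-gradient hypothesis and Lemma~\ref{lem:local_avgerror}: it is at most $G\|\bar\bs_k-\hat\bs_k\|_2 \leq Gc_2/k^c$.

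For the second summand, I would repeat the manifold-identification reasoning from the flow corollary, in the discrete setting. Because $\hat\bs_k=\sum_i\beta_{k,i}\tilde\bs_i$ with $\tilde\bs_i\in\conv(\mS(\bx^*))$ and $\sum_i\beta_{k,i}=1$, we have $\hat\bs_k\in\conv(\mS(\bx^*))$. After manifold identification, $\lmo_\mD(\bx_k)\subseteq\lmo_\mD(\bx^*)$, so the LMO value at $\bx_k$ is attained (up to an $O(\|\nabla f(\bx_k)-\nabla f(\bx^*)\|_2)$ perturbation) on the whole face $\conv(\mS(\bx^*))$. Using weak duality, $\nabla f(\bx_k)^T(\bs_k-\bx_k)=-\gap(\bx_k)\leq -\bh(\bx_k)$, and the face argument upgrades this to $\nabla f(\bx_k)^T(\hat\bs_k-\bx_k)\leq -\bh(\bx_k)$ modulo an error absorbed into the constants (strong convexity bounds $\|\bx_k-\bx^*\|_2=O(\sqrt{\bh_k})$, so this error is of higher order). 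Combining, the recursion reduces to
\[
\bh_{k+1}-\bh_k \leq -\gamma_k\bh_k + \frac{\gamma_k Gc_2}{k^c} + \frac{C}{k^r}.
\]

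Finally I would verify the candidate $\bh_k \leq C_4\max\{\log k/(c+k)^c,\;1/k^{r+1}\}$ by induction on $k\geq\bar k$, splitting into two balance regimes. When $r+1>c$ the $\gamma_k/k^c$-term dominates the residual; the recursion integrates exactly as in the flow corollary (whose Lyapunov ODE has solution $\log t/(c+t)^c$), and the extra $\log$ factor appears because the forcing and the decay rate meet at the same order. When $r+1\leq c$ the residual $C/k^r$ (properly interpreted as $C\gamma_k^2\|\bar\bs_k-\bx_k\|_2^2=O(1/k^{r+2})$, the discretization remainder from $L$-smoothness) dominates; balancing $-c\bh_k/k$ against $O(1/k^{r+2})$ gives steady-state $\bh_k=O(1/k^{r+1})$, which the candidate captures. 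In each regime the induction step reduces to an algebraic inequality of the form $(c-\alpha)C_4 \geq \text{const}$ for the appropriate exponent $\alpha\in\{c,r+1\}$, solvable by taking $C_4$ large enough and $\bar k$ large enough so that the $(k+1)/k$ correction terms are absorbed. The main obstacle is the step-3 face argument: making the ``$\nabla f(\bx_k)^T(\hat\bs_k-\bx_k)\leq -\bh(\bx_k)$'' clean requires carefully tracking the $O(\sqrt{\bh_k})$ gradient-perturbation error and showing it can be absorbed at the given rate; the induction in step 4 then only works because that error decays faster than the claimed rate near convergence.
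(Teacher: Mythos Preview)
Your overall skeleton---introduce $\hat\bs_k$ from Lemma~\ref{lem:local_avgerror}, split $\nabla f(\bx_k)^T(\bar\bs_k-\bx_k)$ into the $(\bar\bs_k-\hat\bs_k)$ and $(\hat\bs_k-\bx_k)$ pieces, bound the first via Cauchy--Schwarz and the lemma, then close an induction with a two-regime case split---is exactly what the paper does. The case split and the reinterpretation of the residual as $O(1/k^{r+2})$ also match.

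The one substantive divergence is your ``main obstacle,'' step~3. The paper does \emph{not} run a perturbation argument here: it simply asserts the equality
\[
\nabla f(\bx_k)^T(\hat\bs_k-\bx_k)=\nabla f(\bx_k)^T(\bs_k-\bx_k)=-\gap(\bx_k),
\]
invoking the identification hypothesis in the form $\mS(\bx_k)=\mS(\bx^*)$ (so that $\nabla f(\bx_k)^T(\cdot)$ is constant on $\conv(\mS(\bx^*))$, and $\hat\bs_k$ inherits the LMO value). Your proposed alternative---carrying an $O(\sqrt{\bh_k})$ gradient-perturbation error---does not close as written: inserting a term $\gamma_k C'\sqrt{\bh_k}$ into the recursion $\bh_{k+1}\leq(1-\gamma_k)\bh_k+\gamma_k C'\sqrt{\bh_k}+\cdots$ forces the balance $\bh_k\asymp\sqrt{\bh_k}$, i.e.\ $\bh_k$ bounded away from zero, so it is not ``higher order'' and cannot be absorbed at the target rate. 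You should drop the perturbation route and use the direct equality under the stronger identification assumption, as the paper does.
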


\begin{proof}
First, we rewrite the ODE in a more familiar way, with an extra error term
\[
\bh(\bx_{k+1})-\bh(\bx_k) = \gamma_k\underbrace{\nabla f(\bx_k)^T(\bar \bs_k - \hat \bs_k)}_{\leq \gamma_k G\|\bar\bs_k-\hat\bs_k\|_2} +  \gamma_k\nabla f(\bx_k)^T(\hat \bs_k - \bx_k) + \frac{C}{k^{r+2}}
\]
where $\hat \bs_k$ is as defined in \eqref{eq:def-shat}. By construction, $\hat \bs_k$ is a convex combination of $\tilde \bs_i\in \mS(\bx^*)$. Moreover, after $k \geq \bar k$, $\mS(\bar \bx_k) = \mS(\bx^*)$, and thus 
\[
\nabla f(\bx_k)^T(\hat \bs_k - \bx_k) = \nabla f(\bx_k)^T(\bs_k - \bx_k) = -\gap(\bx_k) \leq -\bh(\bx_k).
\]
Then,   piecing it together,
\[
\bh(\bx_{k+1})-\bh(\bx_k) \leq \gamma_kG\|\bar\bs-\hat\bs\|_2 -\gamma_k\bh(\bx_k) + \frac{C}{k^{r+2}} \overset{\text{Lemma \ref{lem:local_avgerror}}}{\leq } \underbrace{\gamma_k\frac{GC_2}{k^c}}_{\leq C_3/k^{c+1}} -\gamma_k\bh(\bx_k) + \frac{C}{k^{r+2}}
\]

Recursively, we can now show that for $C_4 \geq C+C_3$, if
\[
\bh(\bx_k) \leq C_4 \max\{\frac{\log(k)}{(c+k)^c},\frac{1}{k^{r+1}}\}
\]
then, 
\begin{eqnarray*}
\bh(\bx_{k+1})&\leq& \frac{C_3}{k^{c+1}}+ \frac{C}{k^{r+1}}+(1-\gamma_k) \bh(\bx_k) \\
&\leq & \frac{C_3}{k^{c+1}}+ \frac{C}{k^{r+1}}+\frac{k}{c+k}  C_4\max\{\frac{\log(k)}{(c+k)^c},\frac{1}{k^{r+1}}\}.
\end{eqnarray*}
If $c\leq r+1$ then 

\begin{eqnarray*}
\bh(\bx_{k+1})&\leq&  \frac{C+C_3}{k^c}+\frac{k}{c+k}  \frac{C_4\log(k)}{(c+k)^c}\leq \frac{k}{c+k}  \frac{C_4\log(k)}{(c+k)^c}\leq   \frac{C_4\log(k+1)}{(c+k+1)^c}
\end{eqnarray*}
for $k$ large enough. Otherwise,

\begin{eqnarray*}
\bh(\bx_{k+1})&\leq&  \frac{C+C_3}{k^{r+2}}+\frac{k}{c+k}  \frac{C_4}{k^{r+1}}\leq \frac{C_4}{(k+1)^{r+1}}.
\end{eqnarray*}
for $k$ large enough.
\end{proof}

\begin{theorem}[Local convergence rate]
Picking $c \geq 3p/2+1$, the proposed method \AvgFW~has an overall convergence $\bh(\bx_k) = O(k^{-3p/2})$.
\end{theorem}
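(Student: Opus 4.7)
The plan is to chain together three results already established in the paper. First, the global rate theorem (Theorem \ref{th:globalrate}) gives the unconditional bound $\bh(\bx_k) = f(\bx_k) - f^* = O(1/k^p)$. This is valid for all $k$ and requires no sparsity or manifold-identification assumption. Second, I would feed this rate into the discrete averaging corollary in Appendix \ref{app:sec:averaging}: taking $q = p$ as the current known decay rate of $\bh(\bx_k)$, and using strong convexity of $f$ to conclude $\tfrac12\|\bx^* - \bx_k\|_2^2 \leq \mu^{-1}\bh(\bx_k)$, the corollary gives
\[
\|\bar\bs_k - \bx_k\|_2^2 \;\leq\; C\,\max\{k^{-(q/2 + p - 1)},\, k^{-2p}\} \;=\; C\,\max\{k^{-(3p/2-1)},\, k^{-2p}\}.
\]
For $p \in (0,1)$ the first term dominates, yielding the effective decay exponent $r = 3p/2 - 1$ for the discretization-residual $\|\bar\bs_k - \bx_k\|_2^2$.

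Third, after manifold identification (which occurs at some finite $\bar k$ under the mild non-degeneracy assumption underlying gap-safe screening), I would invoke Corollary \ref{lem:local_convratebnd}, which describes the one-step recursion
\[
\bh(\bx_{k+1}) - \bh(\bx_k) \;\leq\; \gamma_k \nabla f(\bx_k)^T(\bar\bs_k - \bx_k) + \frac{C}{k^{r+2}},
\]
and whose candidate solution is $\bh(\bx_k) = C_4 \max\bigl\{\log(k)/(c+k)^c,\, 1/k^{r+1}\bigr\}$. Substituting $r = 3p/2 - 1$ gives $r + 1 = 3p/2$, so the second branch of the maximum equals $1/k^{3p/2}$. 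The hypothesis $c \geq 3p/2 + 1$ ensures $c > r + 1$, placing us in the "otherwise" branch of the case analysis inside that corollary's proof, so the polynomial term dominates the logarithmic one and we conclude $\bh(\bx_k) = O(1/k^{3p/2})$.

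The main obstacles I anticipate are bookkeeping rather than conceptual. The first is that the averaging corollary is stated asymptotically, so I must verify that the constant $C$ in the bound on $\|\bar\bs_k - \bx_k\|_2^2$ can be absorbed into the constant $C$ appearing in the recursion of Corollary \ref{lem:local_convratebnd}, and that the induction in that corollary's proof can be started from $k = \bar k$ (not $k = 0$) with an appropriately enlarged $C_4 \geq C + C_3$ chosen to dominate the initial value $\bh(\bx_{\bar k})$. The second is the mild range restriction: the argument as stated requires $p < 2$ for the $k^{-(3p/2-1)}$ branch to be the binding one in the averaging bound, which is consistent with the hypothesis $p \leq 1$ under which the global theorem was established; one should note that no bootstrapping to a faster rate is claimed, so a single pass through the three results suffices.
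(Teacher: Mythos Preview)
Your proposal is correct and mirrors the paper's own proof essentially line for line: chain the global rate (Theorem~\ref{th:globalrate}) to get $q=p$, feed this into the discrete averaging corollary of Appendix~\ref{app:sec:averaging} to obtain $r=\min\{q/2+p-1,\,2p\}=3p/2-1$, and then invoke Corollary~\ref{lem:local_convratebnd} (which internally uses Lemma~\ref{lem:local_avgerror}) in its $c>r+1$ branch to conclude $\bh(\bx_k)=O(1/k^{3p/2})$. The bookkeeping concerns you flag (restarting the induction at $\bar k$, absorbing constants into $C_4$) are handled exactly as you describe and are glossed over in the paper as well.
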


\begin{proof}
Putting together Theorem \ref{th:globalrate}, Lemma \ref{lem:local_avgerror}, and Corollary \ref{lem:local_convratebnd}, we can resolve the constants
\[
q = p, \qquad r = \min\{q/2+p-1,2p\} = \frac{3p}{2}-1
\]
and  resolves an overall convergence bound of 
$\bh(\bx_k) = O(k^{-3p/2})$.

\end{proof}

\end{document}